\begin{document}

\newtheorem{thm}{Theorem}[section]
\newtheorem{cor}[thm]{Corollary}
\newtheorem{prop}[thm]{Proposition}
\newtheorem{lem}[thm]{Lemma}
%
%      Theorem style with roman text, numbered within section
\theoremstyle{definition}
\newtheorem{rem}[thm]{Remark}
\newtheorem{defn}[thm]{Definition}
\newtheorem{note}[thm]{Note}
\newtheorem{eg}[thm]{Example}
\newcommand{\Prf}{\noindent\textbf{Proof.\ }}
\newcommand{\bx}{\hfill$\blacksquare$\medbreak}
\newcommand{\upbx}{\vspace{-2.5\baselineskip}\newline\hbox{}%
\hfill$\blacksquare$\newline\medbreak}
\newcommand{\eqbx}[1]{\medbreak\hfill\(\displaystyle #1\)\bx}
% for proofs ending in a one line equation
%      Useful shortforms

%      Blackboard bold letters
\newcommand{\bC}{{\mathbb{C}}}
\newcommand{\bD}{{\mathbb{D}}}
\newcommand{\bN}{{\mathbb{N}}}
\newcommand{\bQ}{{\mathbb{Q}}}
\newcommand{\bR}{{\mathbb{R}}}
\newcommand{\bT}{{\mathbb{T}}}
\newcommand{\bX}{{\mathbb{X}}}
\newcommand{\bZ}{{\mathbb{Z}}}
\newcommand{\bH}{{\mathbb{H}}}
%      Useful shortforms
\newcommand{\BH}{{\B(\H)}}
\newcommand{\bsl}{\setminus}
\newcommand{\ca}{\mathrm{C}^*}
\newcommand{\cstar}{\mathrm{C}^*}
\newcommand{\cenv}{\mathrm{C}^*_{\text{env}}}
\newcommand{\rip}{\rangle}
\newcommand{\ol}{\overline}
\newcommand{\td}{\widetilde}
\newcommand{\wh}{\widehat}
\newcommand{\sot}{\textsc{sot}}
\newcommand{\wot}{\textsc{wot}}
\newcommand{\wotclos}[1]{\ol{#1}^{\textsc{wot}}}
%      Capital script letters
 \newcommand{\A}{{\mathcal{A}}}
 \newcommand{\B}{{\mathcal{B}}}
 \newcommand{\C}{{\mathcal{C}}}
 \newcommand{\D}{{\mathcal{D}}}
 \newcommand{\E}{{\mathcal{E}}}
 \newcommand{\F}{{\mathcal{F}}}
 \newcommand{\G}{{\mathcal{G}}}
\renewcommand{\H}{{\mathcal{H}}}
 \newcommand{\I}{{\mathcal{I}}}
 \newcommand{\J}{{\mathcal{J}}}
 \newcommand{\K}{{\mathcal{K}}}
\renewcommand{\L}{{\mathcal{L}}}
 \newcommand{\M}{{\mathcal{M}}}
 \newcommand{\N}{{\mathcal{N}}}
\renewcommand{\O}{{\mathcal{O}}}
\renewcommand{\P}{{\mathcal{P}}}
 \newcommand{\Q}{{\mathcal{Q}}}
 \newcommand{\R}{{\mathcal{R}}}
\renewcommand{\S}{{\mathcal{S}}}
 \newcommand{\T}{{\mathcal{T}}}
 \newcommand{\U}{{\mathcal{U}}}
 \newcommand{\V}{{\mathcal{V}}}
 \newcommand{\W}{{\mathcal{W}}}
 \newcommand{\X}{{\mathcal{X}}}
 \newcommand{\Y}{{\mathcal{Y}}}
 \newcommand{\Z}{{\mathcal{Z}}}

% Fraktur letters
\newcommand{\fA}{{\mathfrak{A}}}
\newcommand{\fB}{{\mathfrak{B}}}
\newcommand{\fC}{{\mathfrak{C}}}
\newcommand{\fD}{{\mathfrak{D}}}
\newcommand{\fE}{{\mathfrak{E}}}
\newcommand{\fF}{{\mathfrak{F}}}
\newcommand{\fG}{{\mathfrak{G}}}
\newcommand{\fH}{{\mathfrak{H}}}
\newcommand{\fI}{{\mathfrak{I}}}
\newcommand{\fJ}{{\mathfrak{J}}}
\newcommand{\fK}{{\mathfrak{K}}}
\newcommand{\fL}{{\mathfrak{L}}}
\newcommand{\fM}{{\mathfrak{M}}}
\newcommand{\fN}{{\mathfrak{N}}}
\newcommand{\fO}{{\mathfrak{O}}}
\newcommand{\fP}{{\mathfrak{P}}}
\newcommand{\fQ}{{\mathfrak{Q}}}
\newcommand{\fR}{{\mathfrak{R}}}
\newcommand{\fS}{{\mathfrak{S}}}
\newcommand{\fT}{{\mathfrak{T}}}
\newcommand{\fU}{{\mathfrak{U}}}
\newcommand{\fV}{{\mathfrak{V}}}
\newcommand{\fW}{{\mathfrak{W}}}
\newcommand{\fX}{{\mathfrak{X}}}
\newcommand{\fY}{{\mathfrak{Y}}}
\newcommand{\fZ}{{\mathfrak{Z}}}

\newcommand{\sgn}{\operatorname{sgn}}
\newcommand{\rank}{\operatorname{rank}}

\newcommand{\Isom}{\operatorname{Isom}}

\newcommand{\qIsom}{\operatorname{q-Isom}}

\newcommand{\sIsom}{\operatorname{s-Isom}}

\newcommand{\grid}{\operatorname{grid}}
\newcommand{\ggk}{\operatorname{ggk}}
\newcommand{\lgk}{\operatorname{lgk}}
\newcommand{\hc}{\operatorname{hc}}
\newcommand{\ecl}{\operatorname{ecl}}
\newcommand{\Aut}{\operatorname{Aut}}
\newcommand{\sep}{\operatorname{sep}}
\newcommand{\ul}{\underline}
\newcommand{\ec}{\operatorname{ec}}
\newcommand{\lec}{\operatorname{lec}}
\newcommand{\Aff}{\operatorname{Aff}}
\newcommand{\stab}{\operatorname{stab}}

\newcommand{\hxl}{\operatorname{hxl}}
\newcommand{\sql}{\operatorname{sql}}
\newcommand{\cyc}{\operatorname{cyc}}
\newcommand{\pen}{\operatorname{pen}}

\newcommand{\cat}{\operatorname{cat}}

%\hyphenation{sss-zzz}

%\begin{center}
 
%{\color{blue} \quad  \today}
%\end{center}

\bigskip

\title[Isotopy classes of 3-periodic net embeddings] %and linear graph knots
{Isotopy classes for 3-periodic net embeddings} %and linear graph knots}

\author[I. A. Baburin, S. C. Power and D. M. Proserpio]{I. A. Baburin, S. C. Power and D. M. Proserpio}

\address{Theoretische Chemie, Technische Universit{\"a}t Dresden, %Bergstrasse 66b, 
D-01062 Dresden, Germany.}
\address{Dept.\ Math.\ Stats.\\ Lancaster University\\
Lancaster LA1 4YF \\U.K. }
\address{Dipartimento di Chimica Strutturale e Stereochimica Inorganica (DCSSI), Universita di
Milano, 
%Via G. Venezian 21, 
20133 Milano, Italy.}

\email{baburinssu@gmail.com}
\email{s.power@lancaster.ac.uk}
\email{davide.proserpio@unimi.it}

%\begin{center}

\thanks{This work was supported by the Engineering and Physical Sciences Research Council [grant number EP/P01108X/1]. }
\thanks{2010 {\it  Mathematics Subject Classification.
 74E15, 57Q37, 52C25 }}
\thanks{Key words and phrases: periodic net, embedded net, coordination polymer, isotopy type, crystallographic framework}

\begin{abstract}
 Entangled embedded periodic nets and crystal frameworks  are defined, along with 
their \emph{dimension type}, \emph{homogeneity type}, \emph{adjacency depth} and \emph{periodic isotopy type}. We obtain periodic isotopy classifications for various families of embedded nets with small quotient graphs. We enumerate the 25 periodic isotopy classes of depth 1 embedded nets with a single vertex quotient graph. Additionally, we classify embeddings of $n$-fold copies of {\bf pcu} with all connected components in a parallel orientation and $n$ vertices in a repeat unit, and determine their maximal symmetry periodic isotopes. We also introduce the methodology of linear graph knots on the flat 3-torus $[0, 1)^3$. These graph knots, with linear edges, are spatial embeddings of the labelled quotient graphs of an embedded net which are associated with its periodicity bases.
\end{abstract}

\date{}

\maketitle
%\tableofcontents

\section{Introduction}\label{s:intro}

Entangled and interpenetrating coordination polymers
%are intriguing structures that 
have been investigated intensively by chemists in recent decades. Their classification and analysis in terms of symmetry,  geometry and topological connectivity 
is an ongoing research direction \cite{ale-bla-koc-pro},\cite{ale-bla-pro-1,ale-bla-pro-2},
%\cite {ale-vir-bla-per},
%\cite{bla},
\cite{bla-car-cia-pro},\cite{bon-oke}, \cite{car-cia-pro}, \cite{car-et-al}, \cite{koc-et-al}.
 These investigations also draw on mathematical methodologies concerned with periodic graphs, group actions and classification \cite{bab},\cite{del-2},\cite{schulte}. 
On the other hand it seems that there have been few investigations to date on the dynamical aspects of entangled periodic structures with regard to  deformations avoiding edge collisions, or with regard to excitation modes and flexibility in the presence of additional constraints. In what follows we take some first steps in this direction and along the way obtain some systematic classifications of basic families.
%[REFs?]

A \emph{proper linear 3-periodic net} $\N= (N, S)$ is a periodic bond-node structure in three dimensions with a set $N$ of distinct nodes and a set $S$ of noncolliding  line segment bonds.  The  underlying structure graph $G=G(\N)$ is also known as the \emph{topology} of $\N$ (cf. \cite{del-et-al-4}).
%, is a periodic graph known also as a \emph{3-periodic net}. 
%In crystallographic terms, \emph{embedded net}, 
Thus, the net $\N$ is an \emph{embedded net} for a topology $G$, it is translationally periodic with respect to each basis vector of some vector space basis for the ambient space, the nodes are distinct points, and the bonds of $\N$ are noncolliding straight line segments between nodes.  We also define the companion structure of a crystallographic bar-joint framework $\C$. In this case the bonds are of fixed lengths which must be conserved in any continuous motion. Additionally a \emph{3-periodic graph} $(G,T)$ is a pair in which a countable graph $G$ carries a specific periodic structure $T$.

Formal definitions of the periodic entities $\C, \N$ and $G$ are given in Definitions \ref{d:CF}, \ref{d:linearperiodicnet} and \ref{d:periodicgraph}. In crystallographic terminology it is usual in such definitions to require  connectedness \cite{del-oke}. However, we find it convenient in these definitions to extend the usage to cover disconnected periodic structures.  

Subclasses of linear $d$-periodic nets $\N$ are defined in terms of the diversity of their connected components and we indicate the connections between these class divisions and those used for entangled  coordination polymers. In particular we define the \emph{dimension type}, which gives a list of the periodic ranks  of connected subcomponents, and the \emph{homogeneity type} which  concerns the congruence properties between these components. 

Fundamental to the structure of an embedded periodic net are its \emph{labelled quotient graphs} which are finite edge-labelled graphs determined by periodicity bases. In particular the infinite structure graph $G(\N)$ is determined by any labelled quotient graph, and the (unique) quotient graph $QG(\N)$ is the graph of the labelled quotient graph of a primitive periodicity basis. 
These constructs for $\N$ provide useful discriminating features for embedded nets even if they are insensitive to entanglement and catenation. 

Our main concern is the entangled nature of linear periodic nets in 3-space  which have more than one connected component, however we also consider the \emph{self-entanglement} of connected structures. 
Specifically, we approach the classification of linear periodic nets in terms of a formal notion of \emph{periodic isotopy equivalence}, as given in Definition \ref{d:deformationequivalentnets}.  This asserts that two embedded periodic nets in $\bR^3$ are periodically isotopic if there is a continuous path of noncrossing embedded periodic nets between them which is associated with a continuous path of periodicity bases.  In this way we formalise an appropriate variant of the notion of ambient isotopy which is familiar in the theory of knots and links.

As a tool for understanding periodic isotopy  we define \emph{linear graph knots} on the \emph{flat $3$-torus} 
%, as well as  representative graph knot diagrams.
and their \emph{isotopy equivalence} classes. Such a graph knot is a spatial graph in the $3$-torus which is a geometric realisation (embedding) of the labelled quotient graph of a linear periodic net arising from a choice of right-handed periodicity basis for $\N$. 
%(or with a periodic structure for $G(\N)$).
We prove a natural finiteness theorem (Theorem \ref{t:finiteness}) showing that there are
finitely many periodic isotopy  types of linear graph knots with a given labelled quotient graph. This in turn implies that there are finitely many periodic isotopy types of linear 3-periodic nets with a given labelled quotient graph.

Our discussions and results  are structured as follows.
Sections \ref{s:terminology} to
 \ref{s:isotopyForNets} 
cover terminology, illustrative examples and {general} underlying theory. In Section \ref{s:groupmethods} we give group theory methods, 
%namely the group-supergroup construction of entangled nets, the definition of maximal symmetry periodic isotopes, and the role of Burnside's lemma in counting periodic isotopes.
while in  Sections \ref{s:entanglednets}, \ref{s:latticenets} and \ref{s:furtherdirections} %\ref{s:doublelatticenets} 
we give a range of results, determining periodic isotopy classes and topologies for various families of embedded nets. 
%while in Section \ref{s:rigidity} we indicate further directions for crystallographic bar-joint frameworks.
 
More specifically, in Section \ref{s:terminology}   
we give 
%various periodic structures and  
comprehensive terminology, \emph{ab initio}, and give the connections with terms used for coordination polymers and with the net notations of both the Reticular Chemistry Structural Resourse (RCSR) \cite{rcsr} and ToposPro \cite{bla-she-pro}. In the key Section \ref{s:quotient graphs} we discuss labelled and unlabelled quotient graphs. The example considered in detail in Section \ref{s:mainexample} illustrates terminology and {motivates} the introduction of \emph{model nets} for the {analysis}  of periodic isotopy types (periodic isotopes). In Section \ref{s:adjacencydepth} we define primitive periodicity bases and introduce a measure of \emph{adjacency depth} for an embedded net.  In Section \ref{s:graphknots}, as preparation for the discussion of periodic isotopy for embedded nets,  we define linear graph knots on the flat 3-torus $\bT^3= [0,1)^3$ as spatial graphs with (generalised) line segment edges. In Section \ref{s:isotopyForNets} we discuss various isotopy equivalences for graph knots. Also we define periodic isotopy equivalence for embedded nets and prove that it is an equivalence relation and that there are finitely many periodic isotopes with a common labelled quotient graph. In the group methods of Section \ref{s:groupmethods} we give the group-supergroup construction of entangled nets \cite{bab}, the definition of maximal symmetry periodic isotopes, and the role of Burnside's lemma in counting periodic isotopes.
 In Section \ref{s:entanglednets} we determine periodic isotopy classes and also restricted periodic isotopy classes for various  multicomponent shift homogeneous embeddings of $n$-fold {\bf pcu}.
%($n$-fold {\bf pcu}).  
Such multicomponent embedded nets are related to the interpenetrated structures with translationally equivalent components which are abundant in coordination polymers. For generic embeddings we give proofs, based  Burnside's lemma for counting orbits of spatially equivalent embeddings, while for maximal symmetry embeddings for $n${\bf -pcu} we use computations based on group-supergroup methods. 
%{\color{blue} (add forward reference)}. 
%Perhaps comment on {\bf n-dia} and  {\bf n-srs} ... }
In Section \ref{s:latticenets} 
%and \ref{s:doublelatticenets}
we give a detailed determination of the  19 {topologies and} periodic isotopy classes of connected linear 3-periodic nets with a single vertex quotient graph and adjacency depth 1 (Table 3).
%{\color{cyan}check Tables and numbering} 
In the final section we indicate further research directions.

%for the determination of periodic isotopes and flexibility and rigidity considerations in the case that bar lengths are fixed under isotopic motions.

%PUT LATER ONLY ?
%In view of the structural significance of the underlying net for periodic bar-joint frameworks we anticipate that it would be useful to augment periodic net database resources with tools for the identification of Maxwell lattices and the calculation of flexibility information such as the RUM spectrum \cite{bad-kit-pow}, \cite{dov-exotic}, \cite{pow-poly}, \cite{weg}.

\section{Terminology}\label{s:terminology}
In any investigation with cross-disciplinary intentions, in our case between chemistry  (reticular chemistry and coordination polymers) and mathematics (isotopy types and periodic  frameworks), it is important to be clear of the meaning of terms. Accordingly we begin by defining all terminology from scratch. 

The \emph{structure graph} $G=(V,E)$ of a finite or countably infinite {bar-joint framework} $\G$ is given a priori since, formally, a \emph{bar-joint framework $\G$ in $\bR^d$} is a pair $(G,p)$ consisting of a simple graph $G$, the structure graph, together with a \emph{placement map} $p:V \to \bR^d, p: v \to p(v)$. The \emph{joints} of $\G$ are the points $p(v)$ and the \emph{bars} of $\G$ are  the (unordered) joint pairs $p(v)p(w)$ associated with the edges $vw$ in $E$. It is often assumed 
that $p(v)\neq p(w)$ for the edges $vw$ and hence the bars may also considered to be the associated nondegenerate line segments $[p(v),p(w)]$.

A \emph{$d$-periodic bar-joint framework} in $\bR^d$ is a bar-joint framework $\G = (G,p)$ in $\bR^d$ 
whose periodicity is determined by two sets $F_v, F_e$ of noncoincident joints $p(v)$ and bars $p(u)p(w)$, respectively, together with a set  of basis vectors for translational periodicity, say $\ul{a} = \{a_1,\dots ,a_d\}$. The requirement is that the associated translates of the set $F_v$ and the set $F_e$ are, respectively, disjoint subsets of the set of joints and the set of bars whose unions are the sets of all joints and bars. In particular $p$ is an injective map. 

The pair of sets 
$(F_v, F_e)$ is a building block, or repeating unit, for $\G$. We  refer to this pair of sets also as a \emph{motif} for $\G$ for the basis $\ul{a}$ and note that $\G$ is determined uniquely by any pair of periodic basis and motif. In fact we shall only be concerned with finite motifs. 
%are finite sets then we have a crystal framework.

\begin{defn}\label{d:CF}
A crystallographic bar-joint framework $\C$ in $\bR^d$, or \emph{crystal framework},  is a $d$-periodic bar-joint framework in $\bR^d$ with finitely many translation classes for joints and bars.
\end{defn}
 
Viewing $\C$ as a bar-joint framework, rather than as a geometric $d$-periodic net, is a conceptual prelude to the consideration of dynamical issues of flexibility and rigidity  \cite{pow-poly}, one in which we may bring to bear geometric and combinatorial rigidity theory. Note however that we have not required a crystal framework to be connected.

In the case of a 3D crystal framework $\C$, particularly an entangled  one of material origin, it is natural
to require that the line segments
$[p(v), p(w)]$, for $vw \in E$,
are \emph{essentially disjoint} in the sense that they intersect at most at a common endpoint $p(x)$ for some $x\in V$. 
We generally adopt this noncrossing assumption and say that $\C$ is a \emph{proper} crystal framework in this case.
Thus a proper crystal framework $\C$ determines a closed set, denoted $|\C|$, formed by the union of the (nondegenerate) line segments $[p(v), p(w)]$, for $vw \in E$. We call this closed set the \emph{body}  of $\C$. By our assumptions one may recover $\C$  from its body and the positions of the joints. 
%On the other hand from the body of $\C$ one cannot recover the periodicity basis $\ul{a}$ or even its associated lattice of points in $\bR^d$.

The connected components of a crystal framework may have a lower rank (or dimension) of periodicity. Accordingly we make the following definition.

\begin{defn}\label{d:SCF}A \emph{subperiodic crystal framework}, or \emph{$d'$-periodic crystal framework}, with rank (or periodicity dimension)  $1\leq d' <d$, is a $d'$-periodic bar-joint framework in $\bR^d$, with $d'$ linearly independent period vectors and finitely many translation classes for joints and bars.
\end{defn}

For completeness we define a $0$-periodic bar-joint framework in $\bR^d$ to be a finite bar-joint framework in $\bR^d$. Thus every connected component of a crystal framework in $\bR^d$ is either itself a crystal framework in $\bR^d$ or is a subperiodic crystal framework with rank 
%(periodicity dimension) 
$0 \leq d' \leq d$. Note that a subperiodic subframework exists for $\C$ if and only if $\C$ has infinitely many connected components, that is, if and only if the body of $\C$ has infinitely many topologically connected components.

In view of the finiteness requirement for the $d'$-periodic translation classes, a subperiodic crystal framework in $\bR^d$ has a joint set consisting of finitely many translates of a sublattice of rank $d'$.
%Subperiodic frameworks may have thickness and so are more general than  copies of $d'$-periodic frameworks lying in a $d'$-dimensional subspace or hyperplane. 
In general $d'$-periodic subperiodic frameworks may differ in the nature of their \emph{affine span}, or \emph{spatial dimension}, which may take any integral value between $d'$ and $d$.
Formally, the spatial dimension is the dimension of the linear span of all the so-called \emph{bar vectors}, $p(w)-p(v)$ associated with the bars $p(v)p(w)$ of the framework.
Once again we define a subperiodic framework to be \emph{proper} if there are no intersections of edges.

%{\color{red}minor edits}  
The various definitions above,
%Definitions \ref{d:CF} and \ref{d:SCF}, 
and also the following definition of \emph{dimension type}, transpose immediately to the simpler category of \emph{linear periodic nets} $\N$, as defined in the next section. 
%\ref{d:dimtype}  
%  which we define in the next section. 
%{\color{red}[remark Carlucci-et-al comment moved to 2.4,2.5 }

%However a new definition is needed to capture the idea that a multicomponent subdimensional (proper) crystal framework is \emph{properly entangled} and  we give such a definition later in terms of torus graph knots.

We now introduce the general terminology which is specific to  3-dimensional space. Also we indicate how later this formulation of dimension type aligns with the terminology used by chemists for entangled periodic nets.

\begin{defn}\label{d:dimtype}
A periodic or subperiodic framework $\C$  in 3-dimensional space has \emph{dimension type} $\ul{d}= \{d';d_1, \dots ,d_s\}$,  where $d'$ is the periodicity rank of $\C$ and where $d_1, \dots ,d_s$ is the decreasing list of periodicity ranks of the connected components.
\end{defn}

In particular there are 15 dimension types $\{d\}$ for rank 3 crystallographic frameworks, or for linear 3-periodic nets, namely

\[
\{3;3\}, \{3;3,2\}, \{3;3,1\}, \{3;3,0\}, \{3;3,2,1\},\{3;3,2,0\}, \{3;3,1,0\},\{3;3,2,1,0\}
\]
\[
\{3;2\},\{3;2,1\},\{3;2,0\},\{3;2,1,0\},
\{3;1\},\{3;1,0\},\{3;0\}
\]

%It can be shown that each of these different types can be recognised from topological properties of the linear graph knot. [SKIP JUSTIFICATION ... THIS IS A DIVERSION AND SHOULD FOLLOW FROM ROUTINE TOPOLOGICAL ARGUMENTS (surely), eg torus separation arguments as below...] 
\bigskip

%On the other hand we remark that, with mathematically indulgence, one may define \emph{incommensurate entangled crystal frameworks } as a disjoint unions of  proper crystal frameworks $\C_1, \dots ,\C_k$ in $\bR^d$ which is not itself $d$-periodic. In this case the body components $\C$ are not well-separated (in the sense that their separation distances are all positive). 

\subsection{Categories of periodic structures}
Consider the following frequently used terms for periodic structures, arranged with an increasing mathematical flavour: \emph{Crystal, crystal framework, linear periodic net, periodic graph}, and \emph{topological crystal}. 

We have  defined proper crystal frameworks in $\bR^d$ with essentially disjoint bars and these may be viewed as forming an ``upper category" of periodic objects for which there is interest in bar-length preserving dynamics. If we disregard bar lengths, but not geometry, then we are in the companion category  of \emph{positions}, or \emph{line drawings}, or \emph{embeddings}  of $d$-periodic nets in $\bR^d$. Such embeddings are of interest in reticular chemistry and in this connection we may define
\emph{a linear $d$-periodic net in $\bR^d$} to be a pair $(N, S)$, consisting of a set $N$ of nodes and a set $S$ of line segments, where these sets correspond to the joints and bars of a \emph{proper} $d$-periodic crystal framework. 
%(Of course one could give an ab initio definition but we have proper frameworks to hand.)
A stand-alone definition is the following

\begin{defn}\label{d:linearperiodicnet}
A (proper) linear $d$-periodic net in $\bR^d$ is a pair $\N= (N, S)$ where
\medskip

(i) $S$, the set of edges (or bonds) of $\N$, is a countable set of essentially disjoint line segments $[p,q]$, with $p\neq q$, 

(ii) $N$, the set of vertices (or nodes) of $\N$, is the set of endpoints of members of $S$,

(iii) there is a basis of vectors for $\bR^d$ such that the sets
$N$ and $S$ are invariant under the translation group $\T$ for this basis,

(iv) the sets $N$ and $S$ partition into finitely many $\T$-orbits.
\end{defn}

Thus a linear periodic net can be thought of as a proper linear embedding of the structure graph of a crystal framework, the relevant crystal frameworks being those with no isolated joints of degree $0$. 
Note that a linear periodic net is not required to be connected.

A linear $d$-periodic net is referred to in reticular chemistry as an \emph{embedding}
of a \emph{``$d$-periodic net"}. This is because the term {$d$-periodic net} has been appropriated  for the underlying structure graph of a linear periodic net. See Delgado-Friedrichs and O'Keeffe \cite{del-oke}, for example. This reference, to a more fundamental category on which to build, so to speak, then allows one to talk of a $d$-periodic net having an embedding with, perhaps, certain symmetry attributes.
It follows then, tautologically, that a $d$-periodic net is a graph with certain periodicity properties and we formally specify this in  Definition \ref{d:periodicgraph}.

 The next definition is a slight variant of the definition given by Delgado-Friedrichs \cite{del-2}, in that we also require edge orbits to be finite in number. 
 %The terms \emph{$d$-periodic graph} and \emph{$d$-periodic net} interchangeably.
\medskip
 
\begin{defn}\label{d:periodicgraph}
(i) A \emph{periodic graph} is a pair $(G, T)$, where $G$ is a countably infinite simple graph and $T$
is a free abelian subgroup of $Aut(G)$ which acts on $G$ freely and is such that the set of vertex orbits and edge orbits are finite. The group $T$ is called a \emph{translation
group} for $G$ and its rank is called the \emph{dimension} of $(G, T)$. 

(ii) A \emph{$d$-periodic graph} or a \emph{$d$-periodic net} is a periodic graph of dimension $d$. 

(iii) The translation group $T$ and the periodic graph $(G, T)$ are  \emph{maximal} if no periodic
structure $(G, T')$ exists with $T'$ a proper supergroup of $T$.
\end{defn}
 
The subgroup $T$ (or the pair $(G, T)$) is referred to as
a \emph{periodic structure on $G$}.  
Some care is necessary with assertions such as  ``$\N$ is an embedding of a 3-periodic net $G$". This has two interpretations according to whether $G$ comes with a \emph{given} periodic structure $T$ which is to be represented faithfully in the embedding as a translation group or, on the other hand, whether the embedding respects \emph{some} periodic structure $T'$ in $\Aut(G)$. 

Finally we remark that there is another category of nets which is relevant to more mathematical considerations of entanglement, namely \emph{string-node nets} in the sense of Power and Schulze \cite{pow-sch}. In the discrete case these have a similar definition to linear periodic nets but the edges may be continuous paths rather than line segments.
 
\subsection{Maximal symmetry, the RCSR and self-entanglement}\label{ss:maxsymmetryRCSR} 
Let $\N$ be a linear $d$-periodic net.  Then there is a natural injective inclusion  map
\[  
\iota_\N: \fS(\N) \to\Aut (G(\N))
\]  
from the usual space group $\fS(\N)$ of $\N$ to the automorphism group of its structure graph $G(\N)$.
 
\begin{defn} Let $\N$ be a linear $d$-periodic net.

(i) The \emph{graphical crystallographic group} of $\N$ 
%(or proper crystal framework $\C$) 
is the automorphism group $\Aut (G(\N))$. This is also called the \emph{maximal symmetry group} of $\N$.

(ii) A \emph{maximal symmetry embedding} of $G(\N)$ is an embedded net $\M$ for which $G(\M) = G(\N)$ and the map 
$\iota_\M$
is a group isomorphism.
\end{defn}

A key result of Delgado-Friedrichs \cite{del-1},\cite{del-2} shows that many connected $3$-periodic graphs have {unique} maximal symmetry placements, {possibly with edge crossings}.
These placements arise for a so-called \emph{stable net} by means of  a minimum energy placement, associated with a fixed lattice of orbits of a single node, followed by a renormalisation by the point group of the structure graph. Moreover,
stable nets are defined as those where the minimum energy placement has no node collisions.  
See also \cite{del-oke}, \cite{sun-book}. {While maximum symmetry positions for connected stable nets are unique, up to spatial congruence and rescaling,  edge crossings may occur for simply-defined nets because, roughly speaking, the local edge density is too high. It becomes an interesting issue then  to define and determine the finitely many classes of maximum symmetry proper placements  and this is true also for multicomponent nets.}  See Section \ref{ss:group-supergroup}.

The Reticular Chemistry Structural Resource (RCSR)\cite{rcsr} is a convenient online database which, in part, \emph{defines} a set of around 3000 topologies $G$ together with an indication of their maximal symmetry embedded nets. The  graphs $G$ are denoted in bold face notation, such as ${\bf pcu}$ and ${\bf dia}$, in what is now standard nomenclature. We shall make use of this and denote the maximal symmetry embedding of a connected topology  ${\bf abc}$ as $\N_{\rm abc}$.
This determines $\N_{\rm abc}$ as a subset of $\bR^3$ up to a scaling factor and spatial congruence.
 ToposPro \cite{bla-she-pro} is a more sophisticated program package, suitable for multi-purpose crystallochemical analysis and has a more extensive periodic net database. In particular it provides labelled quotient graphs for 3-periodic nets.

Both these databases give coordination density data which can be useful for discriminating the structure graphs of embedded nets.
  
 In Section \ref{s:isotopyForNets} we formalise the \emph{periodic isotopy equivalence} of pairs of embedded nets. One of our motivations is to identify and classify connected embedded nets which are not periodically isotopic to their maximal symmetry embedded net.
We refer to such an embedded net to be a \emph{self-entangled embedded net}.

\subsection{Derived periodic nets} We remark that the geometry and structural properties of a linear periodic net or framework can often be analysed in terms of \emph{derived} nets or frameworks. These associated structures can arise through a number of operations and we now indicate some of these. 

(i) The periodic substitution of a (usually connected) finite unit with a new finite unit (possibly even a single node) while maintaining incidence properties. This move is common in reticular chemistry for the creation of  ``underlying nets" \cite{ale-bla-koc-pro}, \cite{bon-oke}.

(ii) A more sophisticated operation which has been used for the  classification of coordination polymers replaces each  minimal ring of edges by a node (barycentrically placed) and adds an edge between a pair of such nodes if their minimal rings are entangled. In this way  one arrives at the Hopf ring net (HRN) of an embedded net $\N$. This is usually well-defined as a {(possibly improper)}  linear 3-periodic net and it has proven to be an effective discriminator in the taxonomic analysis of crystals and coordination polymer databases.
%{\color{red}REFs ?}

(iii)  There are various conventions in which notational augmentation is used \cite{ale-bla-pro-1}, \cite{ale-bla-pro-3} to indicate the derivation of an embedded net or its relationship with a parent net. In the RCSR listing for example the notation {\bf pcu-c4} indicates the topology made up of 4 disjoint copies of ${\bf pcu}$ \cite{rcsr}.  
%Also, in TOPOS notation the net with name hxg-d-5-Cmmm  denotes (by ``-d") the derived net obtained from {\bf hxg} (a 6-coordinated net) which is 5-coordinated (hence ``5"), by the removal of a bond from the repeating unit, and which has a maximal symmetry embedding with space group Cmmm.
In Tables 1, 2 we use a notation for model embedded nets, such as $\M_{\rm pcu}^{ff}, ...$, which is indicative of a hierarchical construction. 
%For distinct isotopy classes  we write, for example.... (TO DO or DELETE)

(iv) On the mathematical side, in the rigidity theory of periodic  bar-joint frameworks $\C$ there are natural periodic graph operations and associated geometric moves, such as periodic edge contractions, which lead to inductive schemes in proofs. %[EON] and other periodic graph operations. 
In particular periodic Henneberg moves, which conserve the average degree count, feature in the rigidity and flexibility theory of such frameworks \cite{nix-ros}.

\subsection{Types of entanglement and homogeneity type}\label{s:homtype} Let us return to descriptive aspects  of disconnected  linear $3$-periodic nets  $\N$ in $\bR^3$.

%{\color{cyan}THIS TEXT HAS BEEN EDITED} 
We first note the following scheme of Carlucci et al \cite{car-et-al} which has been used in the classification of observed entangled coordinated polymers. Such a coordination polymer, $\P$ say, is also a proper  linear $d$-periodic net $\N$ in $\bR^3$, and this is either of full rank $d=3$, or is of subperiodicity rank $1 \leq d<3$, or is a finite net (which we shall say has rank 0). Then $\P$  is said to be 
\medskip

(i) in the \emph{interpenetration class} if all connected components of $\N$ are also $d$-periodic,  

(ii) in the \emph{polycatenation class} otherwise. 
\medskip

Thus $\P$ is in the interpenetration class if and only the dimension type of its net is $\{3;3\}, \{2;2\}$,  $\{1;1\}$ or $\{0;0\}$.
\medskip

The entangled coordination polymers in the interpenetration class may be further divided as subclasses of $n$-fold type, according to the number $n$ of components, where, necessarily, $n$ is finite.
\medskip

The linear 3-periodic nets in the polycatenation class have some components which are subperiodic  and in particular they have countably many components. When \emph{all} the components are 2-periodic, that is, when $\N$ has dimension type $\{3;2\}$, then $\N$ is either of  \emph{parallel type} or \emph{inclined type}. Parallel type is characterised by the common coplanarity of the periodicity vectors of the components, whereas $\N$ is of inclined type if there exist 2 components which are not parallel in this manner. 
The diversity here may be neatly quantified by the number, $\nu_2$ say, of planes through the origin that are determined by the (pairs of) periodicity vectors of the components.

Similarly the disconnected linear 3-periodic nets of dimension type $\{3;1\}$ can be viewed as being of parallel type, with all the 1-periodic components going in the same direction, or, if not, as inclined type.  In fact there is a natural further division of the nonparallel (inclined) types for the nets of dimension type $\{3;1\}$ according to whether the periodicity vectors for the components are co-planar or not. We could describe such nets as being of \emph{coplanar inclined type} and \emph{triple inclined type} respectively.  The diversity here may also be neatly quantified by the number, $\nu_1$ say, of lines through the origin that are determined by the periodicity vectors of the components.

The disconnected nets of parallel type are of particular interest for their mathematical and observed entanglement features, such as borromean entanglement and woven or braided structures \cite{car-et-al}, \cite{liu-et-al}.  
\bigskip

 The foregoing terminology is concerned with the periodic and sub-periodic nature of the components of a net without regard to further comparisons between them. On the other hand the following terms identify subclasses according to the possible congruence between the connected components.
 \medskip

$\N$ is of \emph{homogeneous type} if all pairs of components are pairwise congruent. Here the implementing congruences are not assumed to belong to the space group.
\medskip

$\N$ is  \emph{$n$-heterogenous}, with $n>1$, if there are exactly $n$ congruence classes of connected components. 
\medskip

Thus every 3-periodic linear net $\N$ in $\bR^3$ is either of homogeneous type or $n$-heterogeneous for some $n=2,3,\dots $.

The homogeneous linear 3-periodic nets split into two natural subclasses.
\medskip

$\N$ is of \emph{shift-homogenous type} if all components are pairwise shift equivalent.  Otherwise, when $\N$ contains at least one pair of components which are not shift equivalent then we say that the homogeneous net $\N$ is of \emph{rotation type}.
\medskip
 
Finally we take account of the space group of $\N$ to specify a very strong form of homogeneity:  each of the two homogeneous types contain a further subtype according to whether 
$\N$ is also of transitive type or not, where
\medskip

$\N$ is of \emph{component transitive} (or is  \emph{transitive type}) if the space group of $\N$ acts transitively on components. 
\medskip

Such component transitive periodic nets have been considered in detail by Baburin \cite{bab} with regard to their construction through  group-supergroup methods.
\medskip

Note that a homogeneous linear 3-periodic net in $\bR^3$ which is not connected falls into exactly one of 16 possible \emph{dimension-homogeneity types} according to the 4 possible types of homogeneity and the 4 possible dimension types $\ul{d}$, namely $\ul{d}= \{3;3\}, \{3;2\}, \{3;1\}$ or $\{3;0\}$. For a full list of correspondences see Figure \ref{f:summary}.

%SUPPRESSED DIAGRAM  
 
\begin{center}
\begin{figure}[ht]
\centering
\includegraphics[width=12cm]
%{dimensionType.jpg}
{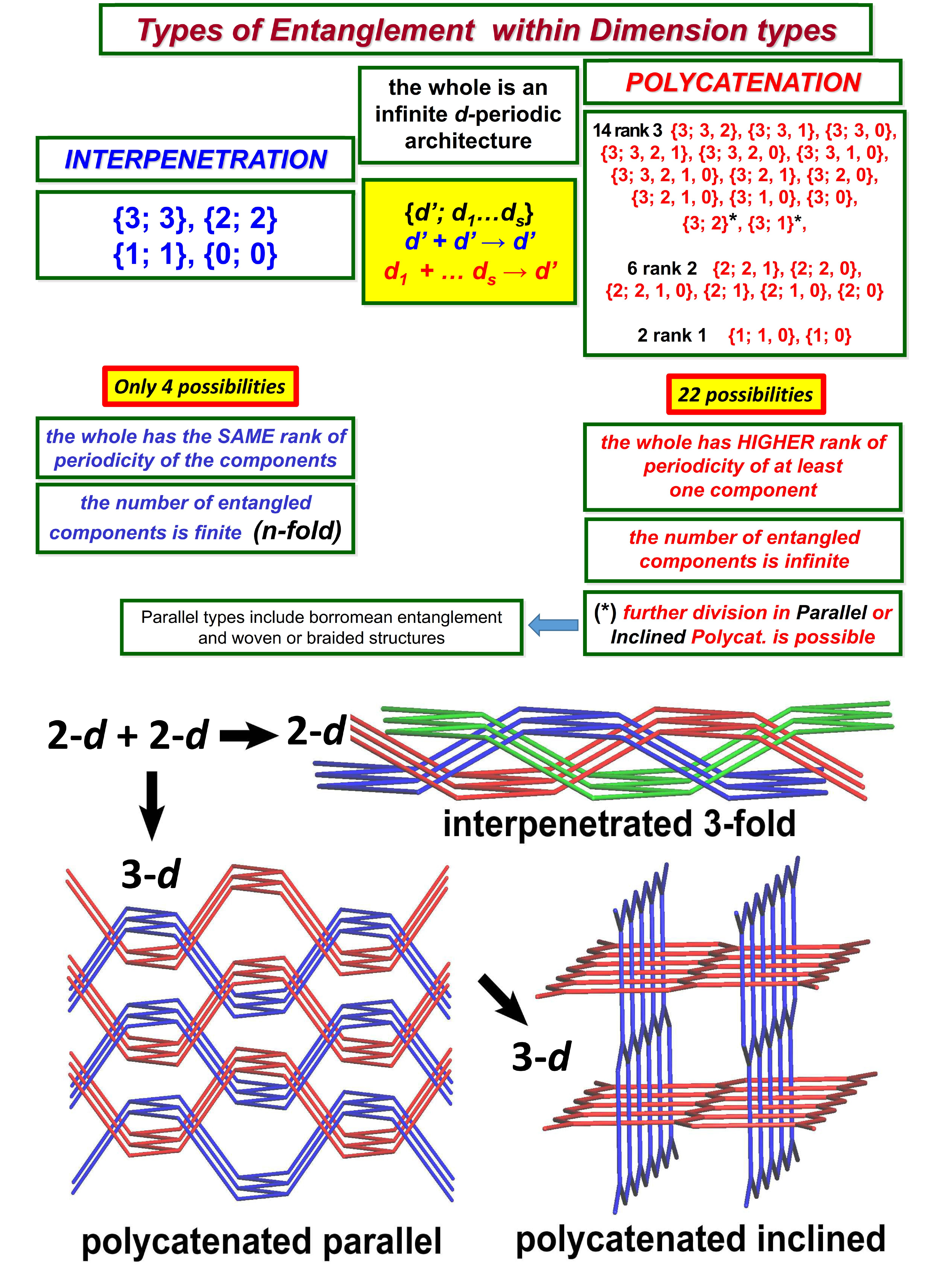}
\caption{Dimension type and polycatenation.}
\label{f:summary}
\end{figure}
\end{center}

\subsection{Catenation and Borromean entanglement}
%.{\color{red}minor edits} 
To the {dimension-homogeneity type}  division of multicomponent embedded nets one may consider further  subclasses which are associated with  entanglement features between the components. Indeed, our main consideration in what follows is a formalisation of such {entanglement} in terms of {linear graph knots}. We note here some natural entanglement invariants of Borromean type.
In fact the embedded nets of dimension type $\{3;2\}$  have been rather thoroughly identified in  \cite{car-et-al}, \cite{ale-bla-pro-3} where it is shown that subdimensional $2$-periodic components can be catenated or woven in diverse ways.
 
To partly quantify this one may define the following entanglement indices. 
Let $\N$ be such a  parallel type embedded net, with dimension type $\{3;2\}$, and let $\S$ be a finite set of components. Then \emph{a separating isotopy} of $\S$ is a continuous deformation of $\S$ to a position which properly lies on both sides of the complement of a plane in $\bR^3$. If there is no separating isotopy for a pair $\S=\{\N_i, \N_j\}$ of components of $\N$ then we say that they are \emph{entangled components}, or are properly entangled. This partial definition can be made rigourous by means of a formal definition of periodic isotopy. 
We may then define the \emph{component entanglement degree} of a  component $\N_i$ of $\N$ to be the maximum number, $\delta(\N_i)$ say, of components $\N_j$ which can form an entangled pair with $\N_i$. Also the component entanglement degree of $\N$ itself may be defined to be the maximum such value.

Likewise one can define the entanglement degrees of components for embedded nets 
%As we have remarked already, an interesting phenomenon in material coordination polymers 
of dimension type  $\{3;1\}$ 
%is that of Borromean entanglement. This is also the case 
and for the 
subdimensional nets of dimension type $\{2;2\}$ (woven layers) and dimension type $\{1;1\}$ (braids).
More formally, we may say that $\N$ has \emph{Borromean entanglement} if there is a set of $n\geq 3$  connected components which admit no separating periodic isotopy while, on the other hand, every pair in this set admits a separating periodic isotopy. 
%One could also define $\beta(\N)$, \emph{the Borromean index} of $\N$, to be the maximum possible value of $n$ over all such sets of components. This can be made precise with an appropriate choice of isotopy equivalence for periodic nets, which we discuss in Section \ref{s:isotopyForNets}. 
In a similar way one can formalise the notion of
%These remarks also apply to the 
Brunnian catenation  \cite{lia-mis} for a multicomponent embedded net.

\subsection{When topologies are different}
Two standard graph isomorphism invariants used by crystallographers are the  \emph{point symbol} and the \emph{coordination sequence}.

In a vertex transitive countable graph $G$ the
point symbol (PS), which appears as $4^{24}6^4$ for {\bf bcu} for example, indicates the multiplicities ($24$ and $4$) of the cycle lengths ($4$ and $6$) for a set of minimal cycles which contain a pair of edges incident to a given vertex. If the valency (or \emph{coordination}) of $G$ is $r$ then there are $r(r-1)/2$ such pairs and so  the multiplicity indices sum to  $r(r-1)/2$. For $G$ nontransitive the point symbol is a list of individual point symbols for the vertex classes \cite{ble-oke-pro}.

The coordination sequence (CS) of a vertex transitive countable graph $G$ is usually given partially as a finite list of integers associated with a vertex $v$, say
$n_1, n_2, n_3, n_4, n_5$, where $n_k$ is the number of vertices $w\neq v$ for which there is a edge path from $v$ to $w$ of length $k$ but not of shorter length. For {\bf bcu} this sequence is   8, 26, 56,  98, 152. Cumulative sums of the CS sequence are known as topological densities, and the RCSR, for example, records the 10-fold sum, td10. 

Even the entire coordination sequence is not a complete invariant for the set of underlying graphs of embedded periodic nets. However this counting invariant can be useful for discriminating nets whose local structures are very similar. A case in point is the pair 8T17 and 8T21 appearing in Table 3, which have partial coordination sequences
8, 32, 88 and 8, 32, 80, respectively.

\section{Quotient graphs}\label{s:quotient graphs} We now define quotient graphs and labelled quotient graphs associated with the periodic structure bases of a linear periodic net $\N$. 
Although quotient graphs and labelled quotient graphs are not sensitive to entanglement they nevertheless offer a means of subcategorising linear periodic nets.  See, for example, the discussions in Eon \cite{eon-2011, eon-top}, Klee \cite{kle}, Klein \cite{klein}, Thimm \cite{thi} and Section \ref{ss:isomorphicnets} below. 

 Let $\N= (N, S)$ be a linear 3-periodic net with periodic structure basis $\ul{a}= \{a_1, a_2, a_3\}$. Then $\N$ is completely determined by $\ul{a}$ and any associated building block motif $(F_v, F_e)$. It is natural, especially in illustrating examples, to choose the set $F_e$ of edges of $\N$ to be as connected as possible and to choose $F_v$ to be a subset of the vertices of these edges.
Let $\T$ denote the translation group  associated with  $\ul{a}$, so that $\T$ is the set of transformations
\[
T_k: (x,y,z) \to (x,y,z)+k_1a_1+k_2a_2 + k_3a_3, \quad k\in \bZ^3.
\]
  Each (undirected) line segment edge $p(e)$ in $F_e$ has the form $[T_kp(v_e), T_lp(w_e)]$, where $p(v_e)$ and $p(w_e)$ are the
representatives in $F_v$ for the endpoint nodes $T_kp(v_e), T_lp(w_e)$ of the edge $p(e)$. The labels $k$ and $l$ here may be viewed as the cell labels or translation labels associated with endpoints of $p(e)$.
(As before $v_e, w_e$ indicate vertices of the underlying structure graph $G(\N)$.) 
 
The \emph{labelled quotient graph} LQG$(\N;\ul{a})$ of the pair $(\N, \ul{a})$ is a finite multigraph together with a directed labelling for each edge, where the labelling is by elements $k \in \bZ^3$. The  vertices correspond to (or are labelled by) the vertices $v$ of the nodes $p(v)$ in $F_v$, and the edges correspond to edges $p(e)$ in $F_e$. The directedness is indicated by the ordered pair $(v_e,w_e)$, or by $v_ew_e$, (viewed as directedness ``from $v_e$ to $w_e$"). The label for this directed edge is then $k-l$ where $k, l$ are the translation labels as in the previous paragraph, and so the labelled directed edge is denoted $(v_ew_e,k-l)$. There is no ambiguity since 
%Since we wish to assign a homology element to any directed cycle of edges  
the directed labelled edge $(v_ew_e,k-l)$ is considered as the same directed edge as  $(w_ev_e,l-k)$. In particular the following definition of the depth of labelled directed graph is well-defined.

\begin{defn}
Let $\H= (H, \lambda)$ be a quotient graph. Then the \emph{depth} of $\H$ is the maximum modulus of the coordinates of the edge labels.
\end{defn}

The \emph{quotient graph} QG$(\N;\ul{a})$ of the pair $(\N, \ul{a})$ is the undirected graph $G$ obtained from the labelled quotient graph. If $\ul{a}$ is a \emph{primitive periodicity basis}, that is, one associated with a maximal lattice in $\N$, then  QG$(\N;\ul{a})$ is independent of $\ul{a}$ and is the usual \emph{quotient graph} 
of $\N$ in which the vertices are labelled by the translation group orbits of the nodes. Primitive periodicity bases are discussed further in the next section. Moreover we identify there the  ``preferred" primitive periodicity bases which have a ``best fit" for $\N$ in the sense of minimising the maximum size of the associated edge labels.

\begin{defn}
The \emph{quotient graph} QG$(\N)$ of a linear periodic net $\N$ in $\bR^d$ is the unlabelled multigraph graph of the labelled quotient graph determined by a primitive periodicity basis.
\end{defn}

 Finally we remark on the homological terminology related to the edge labellings of a labelled quotient graph.
The homology group $H_1(\bT^3;\bZ)$ of the $3$-torus $\bT^3$ 
is isomorphic to $\bZ^3$. In this isomorphism the standard generators of $\bZ^3$ may be viewed as corresponding to (homology classes of) 
three 1-cycles which wind once around the 3-torus (which we may parametrise naturally by the set $[0,1)^3$) in the positive coordinate directions. Also,
we may associate the standard {ordered basis} for $\bZ^3$ with
a periodicity basis $\ul{a}$ for $\N$. In this case the sum of the labels of a directed cycle of edges in the labelled quotient graph is equal to the homology class of the associated closed path in $\bT^3$.

%To expand further on this, note that there is a natural covering map $\pi$ from $\bR^3$ to the $3$-torus $\bT^3 = [0,1)^3$ which wraps the body of $\N$ onto the closed subset $K= \pi(|\N|)$. The set $K$ is in fact the body of a \emph{linear graph knot} which we define later and which is a spatial graph in $\bT^3$. The covering map is a composition of the affine map for which $\ul{a}$ is mapped to the standard right-handed orthonormal basis, followed by the quotient map from $\bR^3$ to $\bT^3$. 
%By the essential disjointness of the edges of $\N$ It follows that $K$ is a geometric embedding of the labelled quotient graph $(Q,h) = $  LQG$(\N,\ul{a})$ into the 3-torus $\bT^3$. Moreover for this embedding the label $k$ of the directed labelled edge $(v_iv_j,k)$ is equal to the homology of the directed cycle in $\bT^3$ formed by the two directed edges $(v_iv_j,k), (v_jv_i,0)$. (The edge $(v_jv_i,0)$ may not lie in $Q$.) In view of this we also refer to the index $k$ of the directed edge $(v_iv_j,k)$ as its \emph{homology}. In this sense we may refer to the labelled quotient graph $(\N,\ul{a})$ as a homology-labelled quotient graph $(Q,h)$.
%NOT USED

\subsection{Embedded nets with a common LQG}\label{s:mainexample}
We now consider the family of \emph{all}
linear 3-periodic nets (proper embedded nets) which have a periodic structure basis determining a particular common labelled  quotient graph. This discussion  illuminates some of the terminology set out so far and it also gives a prelude to discussions of periodic isotopy. Also it motivates
%In particular the discussion and Figure \ref{f:mainexample} motivate 
the introduction of \emph{model nets} and linear graphs knots on the 3-torus.

Let $H$ be the 6-coordinated graph with two vertices $v_1, v_2$, two connecting edges between them and two loop edges on each vertex. Let $(H,\lambda)$ be the labelled quotient graph with labels 
$(0,0,1), (1,1,1)$ for the loop edges for $v_1$, labels $(0,1,0), (0,0,1)$ for the loop edges for $v_2$, and labels $(0,0,0), (0,-1,-1)$ for the two directed edges from $v_1$ to $v_2$.
Let $\N$ be an embedded net with a generasl periodic structure basis  $\ul{a}$ such that  LQG$(\N;\ul{a})=(H, \lambda)$. (In particular $\N$ has adjacency depth $1$, as defined in the next section.) Note that the 4  loop edges on $v_1$ and $v_2$ imply that $\N$ has two countable sets of two dimensional parallel subnets all of which are pairwise disjoint. These subnets are either parallel to the pair $\{a_2, a_3\}$ or to the pair $\{a_3, a_1+a_2+a_3\}$.
In particular if $\N'\subseteq \N$ is the embedded net which is the union of these 2D subnets then $\N'$ is a derived net of $\N$ of dimension type $\{3;2\}$. Also $\N'$ is in the polycatenation class of inclined type (rather than parallel type). 
By means of a simple oriented affine equivalence (see Definition \ref{d:affineequivalence}) the general pair $(\N,\ul{a})$ with LQG $(H,\lambda)$ is equivalent to a pair $(\M,\ul{b})$, having the same LQG, where  $\ul{b}$ is the standard right-handed orthonormal basis. We shall call the pair $(\M,\ul{b})$ a \emph{model net}.

By translation (another oriented affine transformation) we may assume that there is a node $p_1$ of $\M$ at the origin which is associated with the vertex $v_1$ of $H$.
Let $p_2$ be the unique node associated with $v_2$ which lies in  the unit cell $[0,1)^3$. Now the pair $(\M,\ul{b})$ is uniquely determined by $p_2$ and we denote  it simply as $\M(p_2)$.
Figure \ref{f:mainexample} illustrates the part of the linear periodic net  $\M(p_2)$ which is visible in $[0,1)^3$.
In Section \ref{s:graphknots} we shall formalise diagrams such as Figure \ref{f:mainexample} in terms of linear graph knots on the flat 3-torus.

%\begin{center}
%\begin{figure}[ht]
%\centering
%\includegraphics[width=4.5cm]{fig1_222.jpg}
%\caption{xxx.}
%\label{f:mainexampleDavide}
%\end{figure}
%\end{center}

\medskip

\begin{center}
\begin{figure}[ht]
\centering
\includegraphics[width=4.5cm]{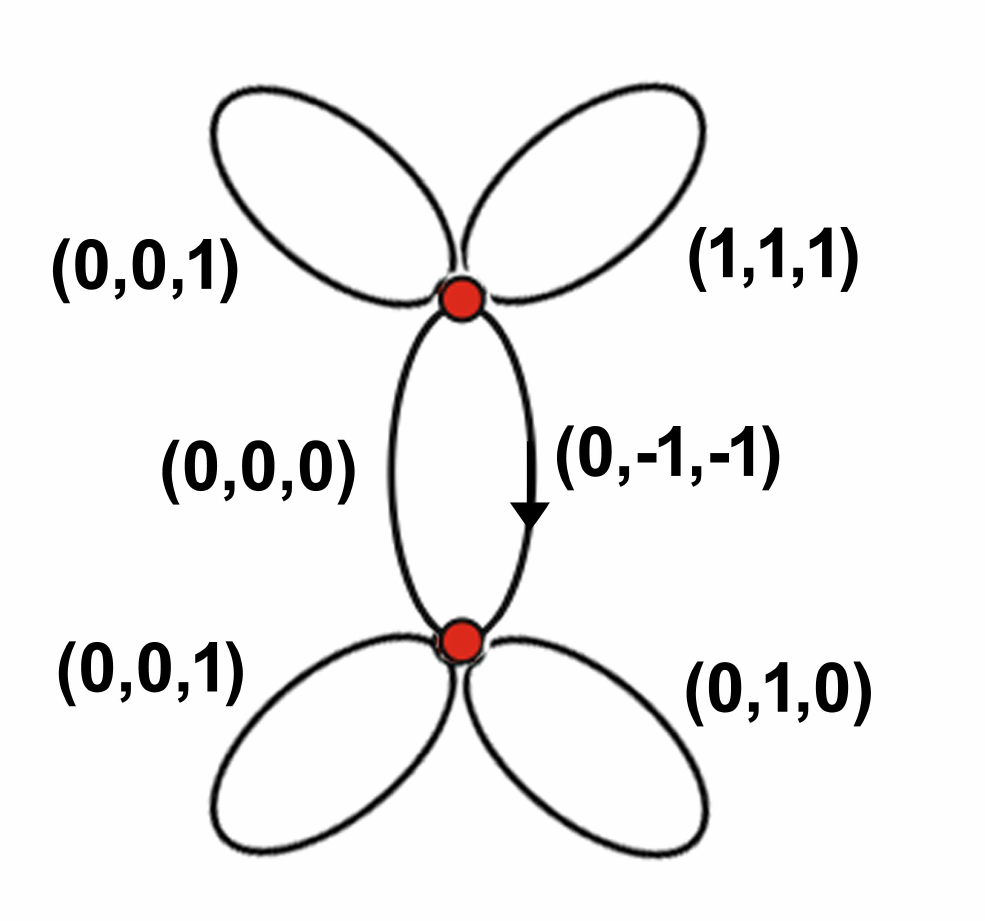}\quad \quad
\includegraphics[width=4.3cm]{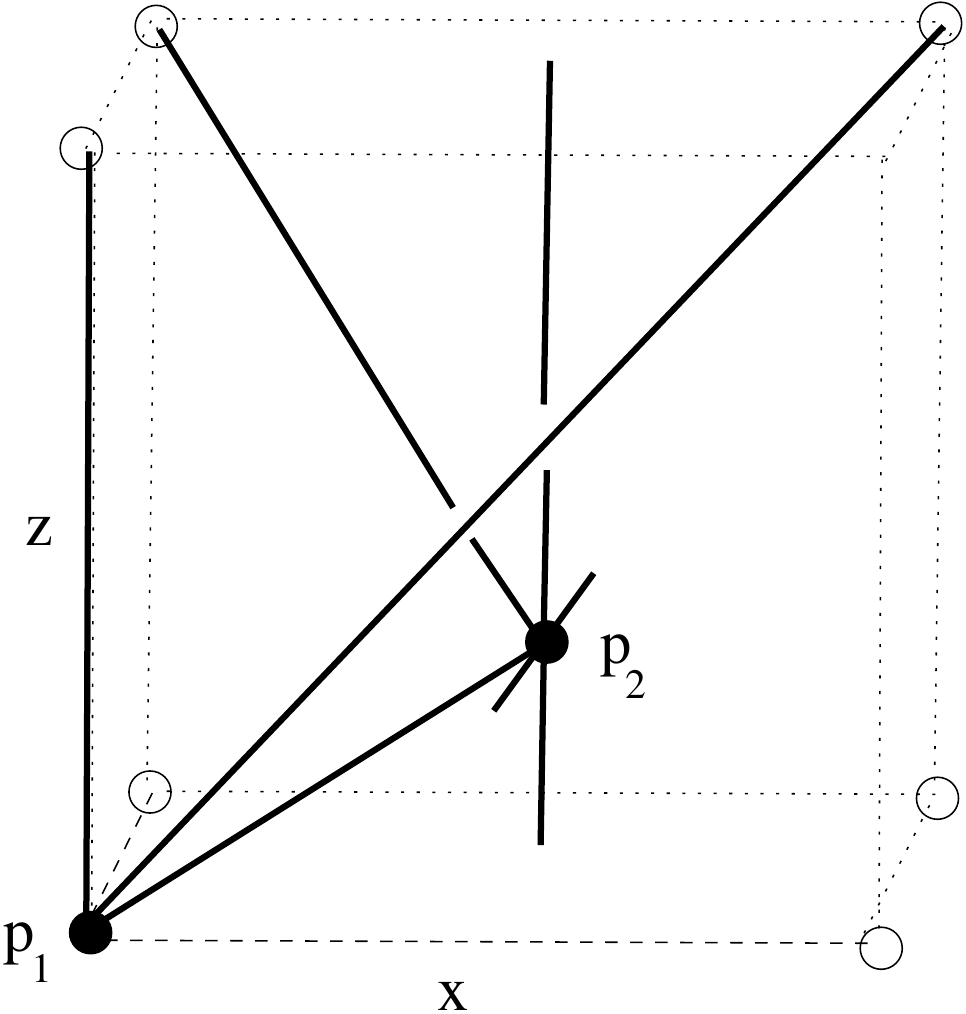} 
\caption{(a) A labelled quotient graph $(H,\lambda)$. (b) Part of the net $\M(p_2)$ in the cube $[0,1)^3$ where $\M(p_2)$ is determined by the LQG together with the standard basis periodic structure $\ul{b}$, a node $p_1$ at the origin and the node $p_2$ in the unit cell $[0,1)^3$.}
 \label{f:mainexample}
\end{figure}
\end{center}

With this normalisation the point $p_2$ can be any point in $[0,1)^3$, subject to the essential disjointness of edges, and we write $\O$ for this set of positions of $p_2$.  
Note that as $p_2$ moves on a small closed circular path around the main diagonal its incident edges are determined and there will be 5 edge crossings with the diagonal. In fact the 2 vertical edges and the 2 horizontal edges which are incident to $p_2$ contribute 2 crossings each, and the other edge incident to $p_2$ contributes 1 crossing. These are the only edge crossings that occur as $p_2$ "carries" its 6 edges of incidence during this motion. 
It follows from similar observations that $\O$ is the disjoint union of 5 pathwise connected sets.  
%Also, each such set can be considered as the union of line segments of the form $(p_1, q)$ where $q$ is a point on the surface of the cube. 

In this way we see that a pair of nets $\M(p_2), \M(p_2')$, with $p_2, p_2'$ in the same component set, are  \emph{strictly} periodically isotopic in the sense that there is a continuous path of linear periodic nets between them each of which has the same periodic structure basis, namely $\ul{b}$. From this we may deduce that there are at most $5$ periodic isotopy classes of embedded nets $\N$ which have the specific labelled quotient graph $(H,\lambda)$ for some periodic structure. Conceivably there could be  fewer periodic isotopy classes since we have not contemplated isotopy paths of nets, with associated paths of periodicity bases, for which the labelled quotient graph changes several times before returning to $(H,\lambda)$.

Let us also note the following incidental facts about the nets $\M(p_2)$. They are $6$-coordinated periodic nets and so provide examples of critically coordinated bar-joint frameworks, of interest in rigidity theory and the analysis of rigid unit modes. This is also true of course for all frameworks with the same underlying quotient graph.  

%On the other hand the underlying 3-periodic graph $(G,T)$ for the embedded nets $\M(p_2)$ in our example is not \emph{stable} in the sense of Delgado-Friedrichs given in Section \ref{ss:maxsymmetryRCSR}. In fact one can check readily that a barycentric or minimum energy placement of $p_2$, relative to the lattice for $p_1$, occurs with edge collisions and so the resulting embedded net is improper rather than proper. Unstable nets are not so significant in coordination polymer analysis and indeed, the net for $\M(p_2)$ does not appear in the RCSR or ToposPro databases. 

%TO DO: CONSIDER DIAGRAMS

\section{Adjacency depth and model nets}\label{s:adjacencydepth}

We now define the {adjacency depth} of a linear 3-periodic net  $\N$.
% which in turn leads to the notion of the depth of a 3-periodic graph.  
This positive integer can serve as a useful taxonomic index and
in Sections \ref{s:latticenets}, \ref{s:furtherdirections}  we determine, in the case of some small quotient graphs, the 3-periodic graphs which possess an embedding as a (proper)  linear 3-periodic net with depth 1. 
%In fact we also determine the 3-periodic graphs which possess a possibly improper (edge crossing) embedding of depth 1. 
These identifications also serve as a starting point for the determination of the periodic isotopy types of more general depth 1 embedded nets.

We first review maximal periodicity lattices for embedded nets $\N$ and their primitive periodicity bases.

\subsection{Primitive periodic structure}
Let  $\ul{a}$ be a vector space basis for $\bR^d$ which consists of a periodicity basis for a linear $d$-periodic net $\N$.
The associated translation group $\T(\ul{a})$ of isometries of $\N$ is a subgroup of the space group of $\N$. 
We say that $\ul{a}$ is a \emph{primitive}, or a {\emph{maximal periodicity basis}}, if there is no periodicity  basis $\ul{b}$  such that $\T(\ul{a})$ is a proper subset of $\T(\ul{b})$. 

We focus on 3 dimensions and in order to distinguish mirror related nets we generally consider right-handed periodicity bases the embedded nets $\N$.

The next {well-known} lemma shows that different primitive bases
%periodic structure 
are simply related by the matrix of an invertible transformation with integer entries and  determinant $1$.
Let $GL(d,\bR)$ be the group of invertible $d\times d$ real matrices, viewed also as linear transformations of $\bR^d$, and let $GL^+(d,\bR)$  be the subgroup of  matrices with positive determinant. Also, let $SL(d,\bR)$ be the subgroup of elements with determinant 1, and  $SL(d,\bZ)$ the subgroup of $SL(d,\bR)$ with integer entries. %We say that the two linear 3-periodic nets $\N_i= (N_i, S_i), i=1,2,$ are conjugate by an element $X\in  GL(3,\bR)$ if $XN_1=N_2$ and $XS_1=S_2$.

\begin{lem}\label{l:periodicstructures}
Let $\N= (N,S)$ be a linear $3$-periodic net in $\bR^3$ with a primitive right-handed periodicity basis $\ul{b}$ and a right-handed periodicity basis $\ul{a}$. Then $\ul{a}$ is primitive if and only if there is a matrix 
$Z \in SL(3,\bZ)$ with $\ul{a}= Z\ul{b}=(Zb_1, Zb_2, Zb_d)$. 
\end{lem}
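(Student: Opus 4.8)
The plan is to recast everything in terms of the maximal period lattice of $\N$. Write $L = \T(\ul b)$; since $\ul b$ is primitive, $L$ is the maximal translation lattice of $\N$, so every periodicity basis of $\N$ generates a sublattice of $L$. First I would record the one structural fact that drives the whole argument: because $\ul a$ is a periodicity basis, $\T(\ul a)\subseteq L$, so each $a_i$ lies in $L$ and is therefore an integer combination of $b_1, b_2, b_3$. Collecting these relations as $\ul a = Z\ul b$ exhibits $Z$ as a matrix with integer entries relative to $\ul b$; since $\ul a$ is itself a vector space basis of $\bR^3$, the map $Z$ is invertible, and since both $\ul a$ and $\ul b$ are right-handed we have $\det Z > 0$. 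Thus an integer matrix $Z$ with $\ul a = Z\ul b$ and $\det Z > 0$ always exists, and the lemma reduces to the single equivalence: $\ul a$ is primitive if and only if $\det Z = 1$.

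For the ``if'' direction, I would suppose $Z \in SL(3,\bZ)$. Then $\det Z = 1$ forces $Z^{-1}$ to have integer entries as well, since its adjugate is integral, so $\ul b = Z^{-1}\ul a$ expresses each $b_j$ as an integer combination of the $a_i$; hence $L = \T(\ul b)\subseteq \T(\ul a)$. Combined with the always-valid inclusion $\T(\ul a)\subseteq L$, this gives $\T(\ul a) = L$, so $\T(\ul a)$ is maximal and $\ul a$ is primitive.

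For the ``only if'' direction, I would assume $\ul a$ primitive, so $\T(\ul a) = L = \T(\ul b)$. Equality of the two lattices means each $b_j$ is also an integer combination of the $a_i$, giving an integer matrix $Z'$ with $\ul b = Z'\ul a$ and therefore $ZZ' = I_3$. Passing to determinants over $\bZ$ yields $\det Z \cdot \det Z' = 1$ with both factors integers, so $\det Z = \pm 1$; the orientation constraint $\det Z > 0$ established above then forces $\det Z = 1$, that is, $Z \in SL(3,\bZ)$.

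I expect no deep obstacle here; the only two points needing genuine care are, first, not assuming integrality of $Z$ a priori but deriving it from the containment $\T(\ul a)\subseteq L$, which is exactly the primitivity (maximality) of $\ul b$, and second, upgrading $\det Z = \pm 1$ to $\det Z = +1$ using the right-handedness hypothesis, which is the one place the orientation assumption is actually used. Everything else is routine unimodular-lattice bookkeeping.
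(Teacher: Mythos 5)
Your proof is correct. Note that the paper offers no proof of this lemma at all --- it is introduced as ``well-known'' and stated without argument --- so there is nothing to compare against, but your argument is the standard unimodularity proof the authors evidently have in mind. The two points you flag are indeed the only delicate ones: integrality of $Z$ comes from $\T(\ul{a})\subseteq\T(\ul{b})$, which in turn uses that the full translation group of $\N$ is itself a rank-$3$ lattice (so the maximal periodicity lattice is unique and equals $\T(\ul{b})$), and the right-handedness hypothesis is used only to promote $\det Z=\pm1$ to $\det Z=1$.
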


%\begin{proof} We may assume that there is a node $v$ at the origin.
%Let $\L\subset \bR^3$ be the integral lattice with standard basis $e_1,\dots ,e_3$ and let $X, Y$ be the matrices in  $GL^+(3,\bR)$ such that $Xe_i=a_i$ and $Y_ie_i=b_i$, for $1\leq i\leq d$. Then $X\L$ (resp $Y\L$) is the orbit of $v$ under $\T(\ul{a})$ (resp. $\T(\ul{b})$). Suppose now that  $\ul{a}$ is primitive. Then $X\L = Y\L$. Thus the matrix $Z=Y^{-1}X$ maps $\L$ to $\L$ and it follows that $Z$ lies in $SL(3,\bZ)$. With respect to the bases $\ul{b}, \ul{a}$ the identity transformation has the representing matrix $Z$ and so $\ul{a}=Z\ul{b}$ as required. On the other hand, if $\ul{a}=Z\ul{b}$ then $\ul{a}$ and $\ul{b}$ determine the same lattice and $\T(\ul{a}) = \T(\ul{b}) $
%\end{proof}

\subsection{The adjacency depth $\nu(\N)$ of a linear periodic net} 
While certain elementary linear periodic nets $\N$  have ''natural" primitive periodicity bases it follows from Lemma \ref{l:periodicstructures}
that such a basis is not determined by $\N$.
It is natural then to seek a preferred basis $\ul{a}$ which is a ``good fit" in some sense. 
The next definition provides one such sense, namely that the primitive basis $\ul{a}$ should be one that minimises the adjacency depth of the pair $(\N, \ul{a})$.

\begin{defn}
The \emph{adjacency depth} of the pair $(\N, \ul{a})$, denoted $\nu(\N, \ul{a})$, is depth of the labelled quotient graph LQG$(\N;\ul{a})$, that is
the maximum modulus of its edge labels. 
The \emph{adjacency depth}, or \emph{depth}, of $\N$ is the minimum value, $\nu(\N)$, of the adjacency depths 
 $\nu(\N, \ul{b})$ taken over all primitive periodicity bases $\ul{b}$.
\end{defn}
 
Let $\N$ be a linear $3$-periodic net with periodicity basis $\ul{a}$. Consider the semi-open parallelepipeds (rhomboids) 
\[
P_k:=P_k(\ul{a}):=\{t_1a_1+t_2a_2+t_3a_3: k_i\leq t_i < k_i+1, 1\leq i \leq 3 \},\quad k\in \bZ^3.
\]
These sets form a partition of $\bR^3$, with $P_k$ viewed as a \emph{unit cell} with label $k$. Note that each cell $P_j$ has 26 ''neighbours", given by those cells $P_l$ whose closures intersect the closure of $P_k$. (For diagonal neighbours this intersection is a single point.) Thus we have the equivalent geometric description that $\nu(\N)=1$ if and only if there is a primitive periodicity basis such that the pair of end nodes of every edge lie in neighbouring cells of the cell partition, where here we also view each cell as a neighbour of itself.  

It should not be surprising that for the \emph{connected}  embedded periodic nets of materials the adjacency depth is generally $1$. Indeed while the maximum symmetry embedding $\N_{\rm elv}$ for the net  {\bf elv} has adjacency depth $2$, it appears to us to be the only connected example in the current RCSR listing with $\nu(\N)>1$. The periodic net {\bf elv} gets its name from the fact that its minimal edge cycles have length 11.
On the other hand in Section \ref{s:entanglednets}  we shall see simple examples of multicomponent nets with adjacency depth equal to the number of connected components.
 
\begin{defn}\label{d:affineequivalence}
Let $\N_i= (N_i, S_i), i=1,2,$ be linear 3-periodic nets in $\bR^3$. Then $\N_1$ and $\N_2$ are \emph{affinely equivalent} (resp. \emph{orientedly (or chirally) affinely equivalent})
if there are translates of $\N_1$ and $\N_2$ which are conjugate by a matrix $X$ in $GL(3,\bR)$ (resp. $GL^+(3,\bR)$).
\end{defn}

It follows from the definitions that if $\N_1$ and $\N_2$ are affinely equivalent then they have the same adjacency depth.

The next elementary lemma is a consequence of the fact that linear 3-periodic nets are, by assumption, proper in the sense that their edges must be noncrossing (ie. essentially disjoint).
 
\begin{lem} \label{l:7and8lemma} 
Let $\N$ be a linear 3-periodic net with
a depth 1 labelled quotient graph $(H,\lambda)$. Then there are at most 7 loop edges on each vertex of $H$
 and the multiplicity of edges between each pair of vertices is at most 8. 
\end{lem}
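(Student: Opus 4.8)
The plan is to reduce both bounds to a single translation-invariant crossing obstruction and then to count residues modulo $2$. Fix the periodicity basis $\ul{a}$ underlying $(H,\lambda)$ and use it to identify the period lattice with $\bZ^3$, so that $\N$ has node set $p(u)+\bZ^3$ as $u$ runs over the vertices of $H$, and every edge label lies in $\{-1,0,1\}^3$ by the depth-$1$ hypothesis. A loop at $u$ with label $m$ (where $m$ and $-m$ name the same loop, and $m\neq 0$) is realised as the full $\bZ^3$-orbit of the segment $[p(u),\,p(u)+m]$, and an edge between distinct vertices $u,v$ with label $m$ as the $\bZ^3$-orbit of the segment joining $p(u)$ to the $v$-node $p(v)+m$. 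Since each edge is an entire translation orbit, whether two edges cross improperly is unchanged by translating either orbit, so it suffices to test a base representative of one against all $\bZ^3$-translates of the other. Note also that reduction of labels modulo $2$ is insensitive to the sign conventions in the labelling.

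The key step is the claim: if two distinct labels $m,m'$ (distinct loops at $u$, or distinct edges between the same pair $u,v$) satisfy $m\equiv m'\pmod 2$ coordinatewise, then the corresponding edges of $\N$ must meet improperly. Indeed, set $k=\tfrac12(m-m')$, which lies in $\bZ^3$ precisely because $m-m'$ has even coordinates. In the loop case the segments $[p(u),\,p(u)+m]$ and $[p(u)+k,\,p(u)+k+m']$ both pass through $p(u)+\tfrac12 m$ at parameter $\tfrac12$; in the edge case $[p(u),\,p(v)+m]$ and $[p(u)+k,\,p(v)+m'+k]$ both pass through $\tfrac12\big(p(u)+p(v)\big)+\tfrac12 m$ at parameter $\tfrac12$. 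In either situation the common point is interior to both segments, hence is not a common endpoint, contradicting the properness (essential disjointness) of $\N$. Thus a proper $\N$ uses at most one label from each parity class.

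It remains to count parity classes, where the parity class of $m\in\{-1,0,1\}^3$ is its image $\bar m\in\{0,1\}^3$ under reduction modulo $2$ (a coordinate $0$ stays $0$, a coordinate $\pm1$ becomes $1$). For an edge between distinct vertices the label ranges over all of $\{-1,0,1\}^3$, so the parity classes are the $8$ elements of $\{0,1\}^3$, giving at most $8$ edges. For a loop the label is a nonzero element of $\{-1,0,1\}^3$ taken up to sign; since $-m\equiv m\pmod 2$ the parity class is well defined on loops, and it is nonzero because $m\neq 0$ forces some coordinate to be $\pm1$. Hence loop labels occupy only the $7$ classes $\{0,1\}^3\setminus\{(0,0,0)\}$, giving at most $7$ loops on each vertex. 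Both bounds are sharp: the net $\N_{\rm bcu}$, presented on the cubic (non-primitive) cell, has two vertices joined by $8$ edges with labels $\{0,-1\}^3$, one per parity class; a matching $7$-loop example is obtained on one vertex from the three axis directions together with one face-diagonal direction from each coordinate plane and one space-diagonal direction, a configuration one checks directly to be crossing-free.

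The only point needing care is that the midpoint coincidence is a genuine violation rather than an artefact. One must verify that the two labels name two \emph{different} edges of $\N$ --- for loops because $m'\neq\pm m$, and for edges between $u\neq v$ because $p(v)-p(u)\notin\bZ^3$ (distinct vertices of $H$ represent distinct translation orbits), so the two $\bZ^3$-orbits of segments are distinct --- and that the shared point occurs at the interior parameter $\tfrac12$ of both segments, so it cannot be an admissible common-endpoint contact (this is immediate). I expect this verification, confirming that a parity coincidence forces an interior, hence forbidden, crossing in every case, to be the main (though routine) obstacle; once it is in place the parity count is immediate and delivers both bounds simultaneously.
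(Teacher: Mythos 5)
Your proof is correct and is essentially the paper's own argument: both rest on the observation that two labels congruent modulo $2$ yield translates of edges meeting at interior midpoints, so at most one label per parity class in $\{0,1\}^3$ survives, giving $7$ nonzero classes for loops and $8$ classes for edges between distinct vertices. Your write-up merely unifies the two cases more explicitly (and checks distinctness of the colliding edges more carefully) than the paper's version, which handles loops by reducing to the seven representatives in $\{0,1\}^3\setminus\{0\}$ and then states the same parity constraint for the non-loop case.
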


\begin{proof} Let $\ul{a}$ be a periodic structure basis such that $\nu(\N;\ul{a})=1$. Without loss of generality we may assume that $\ul{a}$ is an orthonormal basis.
%, since adjacency depth is an invariant for affine equivalence.
Let $p_1$ be a node of $\N$. Let $p_2,\dots , p_8$ be the nodes $T_kp_1$ where $k\neq (0,0,0)$ with coordinates equal to 0 or 1, and let $p_9, \dots , p_{27}$ be the nodes $T_kp_1$ for the remaining values of $k$ with coordinates  equal to 0, 1 or $-1$. Every line segment $[p_1,p_t]$ with $t \geq 9$ has a lattice translate which either coincides with or intersects, at midpoints, one of the line segments  $[p_1,p_t]$ with $t<9$. Since $\N$ has no edge crossings it follows that there are at most 7 translation classes for the edges associated with multiple loops of $H$ at a vertex.

We may assume that $p_1=(0,0,0)$. Let  $q_1$ be a node in $(0,1)^3$ in a distinct translation class. Since the depth is $1$ it follows that the edges $[q_1, p]$ in $\N$ with $p$ a translate of $p_1$, correspond to the positions $p=\lambda$, where $\lambda \in \bZ^3$ has coordinates taking the values $-1,0$ or $1$. The possible values of $\lambda$ are also the labels in the quotient graph of $\N$ for the edges directed from the orbit vertex of $p_1$ to the orbit vertex of $q_1$. There are thus 27 possibilities for the edges $[q_1, p]$, and we denote the terminal nodes $p$ by $\lambda_a, \lambda_b, \dots $. 

Since $\N$ is a proper net, with no crossing edges, we have the constraint that $k=(\lambda_a+\lambda_b)/2$ is not a lattice point for any pair $\lambda_a, \lambda_b$. For otherwise $[q_1+k,\lambda_b+k]$ is an edge of $\N$  and its midpoint coincides with the midpoint of $[q_1,\lambda_a]$.
It follows from the constraint that there are at most 8 terminal nodes.
%\begin{center}
%\begin{figure}[ht]
%\centering
%\includegraphics[width=6cm]{colinearlabellemmaNOTLATEX.eps}
%\caption{latex version to have $\lambda_b+k$ in place of $b+k%$ etc.}
%\label{f:7 and 12 lemma}
%\end{figure}
%\end{center}
\end{proof}

%\subsection{Determining proper  embeddings}\label{ss:algorithm} 
The following proposition gives a necessary condition for a general 3-periodic graph $(G,T)$ to have an embedding as a proper linear 3-periodic net. Moreover this condition is useful later for the computational determination of possible topologies for nets in $\fN_2$.

We say that a labelled quotient graph $(H, \lambda)$ has \emph{the divisibility property}, or is \emph{divisible}, if for some pair of labelled edges $(v_1v_2, k), (v_1v_2,l)$, with the same vertices, and possibly $v_1=v_2$, the vector $k-l$ is divisible in the sense that it is equal to $nt$, with $t \in\bZ^3$ and $n\geq 2$ an integer. If this does not hold then the 3 entries of $k-l$ are coprime and $(H, \lambda)$ is said to be  \emph{indivisible}.
%{\color{blue}IB: can also do pairs $(v_1v_1,k), (v_2v_2,l), v_1 \neq v_2$}

\begin{prop}\label{p:divisible}
Let $\N$ be a (proper) linear 3-periodic net in $\bR^3$ and let $(H,\lambda)$ be a labelled quotient graph  associated with some periodic structure basis for $\N$. Then $(H,\lambda)$ is indivisible. 
%has the coprime property. %Moreover, if $(H,\lambda)$ has a single vertex and is of depth 1 then any embeddingthe condition is sufficient.
\end{prop}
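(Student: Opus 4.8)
The plan is to argue by contradiction from the noncrossing (essential disjointness) hypothesis on $\N$. Suppose $(H,\lambda)$ is divisible, so that there are two directed edges from $v_1$ to $v_2$ (possibly a loop, $v_1=v_2$) with labels $k$ and $l$ satisfying $k-l=nt$ for some $t\in\bZ^3$ and some integer $n\geq 2$; since the two edges are distinct we have $k\neq l$, hence $t\neq 0$. Fix a periodic structure basis $\ul{a}$ realising $(H,\lambda)$ and let $p_1,p_2$ be representative nodes of the orbits $v_1,v_2$. Translating each of the two corresponding line segments of $\N$ by a lattice vector so that it emanates from the common node $p_1$, I obtain edges $e_1=[p_1,A]$ and $e_2=[p_1,B]$, where $A,B$ are lattice translates of $p_2$ and, by linearity of the label-to-vector correspondence, $A-B=ns$ with $s$ the nonzero lattice vector associated with $t$. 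The point is that $s$ is a genuine period of $\N$, so every lattice translate of $e_2$ is again an edge of $\N$.

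The main step is to exhibit a forced interior crossing. Assume first the nondegenerate configuration in which $p_1$ does not lie on the line through $A$ and $B$; equivalently $\beta:=B-p_1$ and $s$ are linearly independent. Consider the edge $f:=e_2+s=[p_1+s,\,B+s]$, which is an edge of $\N$ because $s$ is a period. Writing a point of $e_1$ as $p_1+t(A-p_1)$ and a point of $f$ as $p_1+s+\sigma\beta$, and using $A-p_1=\beta+ns$, the condition $e_1\cap f$ reduces to $(t-\sigma)\beta+(tn-1)s=0$. By linear independence this forces $t=\sigma=1/n$, which lies strictly in $(0,1)$ precisely because $n\geq 2$. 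Hence $e_1$ and $f$ meet in their relative interiors, and they are distinct segments since the intersection is a single interior point; this violates essential disjointness, the required contradiction.

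It remains to dispose of the degenerate case in which $p_1$, $A$ and $B$ are collinear (this includes the collinear loop case). Writing $p_1=B+ms$ with $m\in\bR$, the segments $e_1=[B+ms,\,B+ns]$ and $e_2=[B+ms,\,B]$ lie on a common line that also contains the lattice nodes $B+js$, $j\in\bZ$, all of which are translates of $p_2$. If $m\notin(0,n)$ then one of $e_1,e_2$ contains the other, so they overlap in a nondegenerate segment. If $m\in(0,n)$ is not an integer, then the node $B+s$ (which is one of the intermediate nodes $B+js$, $1\leq j\leq n-1$, available since $n\geq 2$) lies in the relative interior of $e_1$ or of $e_2$, so any edge of $\N$ incident to that node meets the interior of the corresponding segment; both alternatives contradict properness. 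Finally an integer $m\in\{1,\dots,n-1\}$ would place a $v_1$-node at a translate of $p_2$, forcing $v_1=v_2$ and reducing matters to the loop analysis. In every case we reach a contradiction, so no divisible pair of edges can occur and $(H,\lambda)$ is indivisible.

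The only genuinely delicate point is the bookkeeping in the degenerate collinear case, namely verifying that in each sub-case one actually produces either overlapping edges or a node interior to an edge; the heart of the argument is the one-line computation yielding the crossing parameter $1/n$, which is exactly where the hypothesis $n\geq 2$ is consumed.
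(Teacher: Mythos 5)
Your argument is correct in its main line and is essentially the paper's argument made explicit: the paper forms the $1$-periodic zig-zag subnet (resp.\ the $2$-periodic {\bf sql} subnet in the loop case) generated by the two edges and asserts that its translate by the period $t'=t$ (with $k-l=nt'$, in their notation) must cross it; your computation that $e_1$ and $e_2+s$ meet at the parameter value $1/n\in(0,1)$ is exactly that forced crossing, written out. You are in fact more careful than the paper in one respect: the paper dismisses the collinear configuration with the bare assertion that properness makes the two incident edges non-collinear, whereas you treat it as a genuine degenerate case, correctly observing that collinearity produces either an overlap of edges or a node of $\N$ in the relative interior of an edge (which, since every node is an endpoint of some edge, again violates essential disjointness).

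The one place your write-up does not close is the final sub-case: an integer $m\in\{1,\dots,n-1\}$ with $v_1=v_2$. Saying this ``reduces matters to the loop analysis'' is circular, since your degenerate analysis already \emph{is} the loop analysis, and this sub-case is reachable there (take $l'=-ms$, $k'=(n-m)s$). It does close, with the tools you already have: if $m\geq 2$ or $n-m\geq 2$ then one of the intermediate nodes $B+js$ lies in the relative interior of $e_2$ or $e_1$ and you conclude as in the non-integer sub-case; the only remaining possibility is $m=n-m=1$, i.e.\ $n=2$ and $l=-k$, but then $(v_1v_1,k)$ and $(v_1v_1,l)$ coincide as undirected labelled loop edges under the convention identifying $(v_ew_e,k)$ with $(w_ev_e,-k)$, so they do not constitute a pair of distinct edges and the divisibility hypothesis is vacuous there. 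With that sentence added, your proof is complete and, if anything, more watertight on degeneracies than the published one.
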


\begin{proof}
Let $(v_1v_2, k), (v_1v_2,l)$ be two edges of $(H,\lambda)$, with $v_1 \neq v_2,$. Then $\N$ has the incident edges $[(p(v_1),0), (p(v_2),k)], [(p(v_1),0), (p(v_2),l)]$ which, by the properness of $\N$, are not colinear. 
Without loss of generality and to simplify notation assume that the periodicity basis defining the labelled quotient graph is the standard orthonormal basis. Then these edges are $[p(v_1), p(v_2)+k]$ and $[p(v_1), p(v_2)+l]$. 
Taking all translates of these 2 edges by integer multiples of $t=k-l$ we obtain a $1$-periodic (zig-zag) subnet, $\Z$ say, of $\N$ with period vector $t=(t_1, t_2, t_3)$.
 
Suppose next that $t$ is divisible with $t=nt',t'\in \bZ^3$ and $n\geq 2$. Since $\Z+t'$ does not coincide with $\Z$ there are crossing edges, a contradiction.

Consider now two loop edges $(v_1v_1, k), (v_1v_1,l)$ and corresponding incident edges in $\N$, say
$[p(v_1), p(v_1)+k]$ and $[p(v_1), p(v_1)+l]$. Taking all translates of these 2 edges by the integer combinations
$n_1k+n_2l$, with $(n_1,n_2) \in \bZ^2$, we obtain a $2$-periodic subnet, with period vectors $\{k,l\}$, which is an embedding of {\bf sql}. The vector  $t=k-l$ is a diagonal vector for the parallelograms of this subnet and so, as before, $t$ cannot be divisible.
\end{proof}

 As a consequence of the proof we also see that an embedded net is improper if either of the following conditions fails to hold: (i) for pairs of loop edges in the LQG with the same vertex the two labels generate a maximal rank 2 subgroup of the translation group, (ii) for pairs of nonloop edges the difference of the two labels  
generates a maximal rank 1 subgroup.

\subsection{Model nets and labelled quotient graphs} \label{ss:isomorphicnets}
We first note that every abstract $3$-periodic graph $(G,T)$ can be represented by a {model net} $\M$ in $\bR^3$ with standard periodicity basis $\ul{b}$, 
in the sense that $G$ is isomorphic to the structure graph $G(\M)$ of $\M$ by an isomorphism which induces a representation of $T$ as the translation group of $\M$ associated with $\ul{b}$. Formally, we define a \emph{model net} to be such a pair $(\M, \ul{b})$  but we  generally take the basis choice as understood and use notation such as $\M, \M(p,i), \M(p_2)$ etc.

%periodic structure basis, or, equivalently, by a labelled quotient graph. 
 
%[{\color{red}to delete this whole paragaph since the notation $\Aff(...)$ is not used later. Instead we define and use "elementary affine transformations" just before Remark 8.3} Let $\Aff(\bR^3)$ be the affine group of $\bR^3$, that is, the group of maps $Z: \bR^3 \to \bR^3$ of the form $Z:\lambda \to X\lambda + a$, where $X$ is linear and invertible and $a\in \bR^3$. In comparing model nets 
%$\M_1, \M_2$ a prominent role is played by the subgroup of maps $T$ in $\Aff(\bR^3)$ with the property $T(\bZ^3)= \bZ^3$. We denote this \emph{discrete affine group}  as $\Aff(\bZ)$.]
% and we also write $\Aff(\bZ)^+$ for the subgroup of maps $T$ for which $\det(X)=1$. Thus $\Aff(\bZ)^+$ is just the group of affine transformations generated by the integer translation group and matrices $X$ in $SL(3,\bZ)$. 

 Let $(G,T)$ be a 3-periodic graph with periodic structure $T$ and let $H=G/T$ be the quotient graph $(V(H),E(H))$ determined by $T$.
Identify the automorphism group $T$ with the integer translation group of $\bR^3$. This is achieved through the choice of a group isomorphism $i:T \to \bZ^3$ and this choice introduces an ordered triple of generators and coordinates for $T$. Any other such map, $j$ say, has the form $X\circ i$ where $X\in GL(3,\bZ) $.

Label the vertices of $G$ by pairs $(v_k,g)$ where $g\in T$ and $v_1, \dots , v_n$ is a complete set of representatives for the $T$-orbits of vertices. For the sake of economy we also label the vertices of $H$ by $v_1, \dots , v_n$.
Let $p_H:V(H) \to [0,1)^3$ be any \emph{injective} placement map. Then there is a unique injective placement map $p:V(G) \to \bR^3$ induced by $p$ and $i$, with
\[
p((v_k,g)) = p_H(v_k) + i(g),\quad 1\leq k\leq n, g \in T.
\]
Thus the maps $p_H, i$  determine a (possibly improper) {model embedded net} for $(G,T)$ which we denote as $\M(p,i)$. This net is possibly improper in the sense that some edges intersect. Write $\H(i)$ for 
the labelled quotient graph $(H,\lambda)$ of $\M(p,i)$ with respect to  $\ul{b}$. As the notation implies, this depends only on the choice of $i$ which coordinatises the group $T$.

With $i$ fixed we can consider continuous paths of such placements, say $p_H^t, 0\leq t \leq1$, which in turn induce paths of model nets,
$t \to \M_t= \M(p_H^t), 0\leq t \leq1$. (See also Section \ref{s:mainexample}.) When there are no edge collisions, that is, when all the nets $\M_t$ in the path are proper, this provides a \emph{strict periodic isotopy} 
between the  the pairs $(\M_0,\ul{b})$ and $(\M_1,\ul{b})$ and their given periodic structure bases, $\ul{b}$. (Such isotopy is also formally defined in the remarks following Definition \ref{d:deformationequivalentnets}.)  %(On the other hand, any continuous path of placement maps, $p_H^t, 0\leq t \leq1$,  induces a continuous path of model nets, possibly with edge  crossings. Such a path we could refer to as a strict periodic \emph{homotopy} between $\M_0$ and $\M_1$.) 
% periodic structures, that is, between $(\M_0,\ul{b})$ and $(\M_1,\ul{b})$.

%Figure \ref{f:twistystrip} indicates two linear 1-periodic embeddings in $\bR^3$ of the same 1-periodic graph with two components. In view of the periodic twisting these embeddings are not periodically isotopic, in the sense discussed in Section \ref{s:isotopyForNets}, that is, with respect to any (not necessarily maximal) periodic structure.

%\begin{center}
%\begin{figure}[ht]
%\centering
%\includegraphics[width=8cm]{twistystripcompare.eps}
% \caption{Not periodically isotopic.}
% \label{f:twistystrip}
%\end{figure}
%\end{center} 

Note that if $H$ is a \emph{bouquet graph}, that is, has  a single vertex, then the strict periodic isotopy determined by $t \to p_H^t$ 
between two model nets for $\H$  corresponds simply to a path of translations.

%We also note that if $H$ is a bouquet graph then a model net for $(G,T)$ is a \emph{barycentric placement} in the sense that each node is located at the average position of its neighbours.

% The next proposition is stated for linear 3-periodic nets but the characterisation also holds for improper linear 3-periodic nets. 

%TO ABSORB "In Proposition 4.6 - when mentioning “(G, T) and (G’, T’) are isomorphic as 3-periodic graphs”, we could add a footnote (*) “in the sense of Eon (2011)”.  Acta Cryst. (2011). A67, 68-86. That article explicitly discusses isomorphisms compatible with translation groups. I would replace Ref. 31 by the one I propose. In my opinion, Ref. 31 is not that important for us. "
 
In the next proposition the isomorphism of 3-periodic graphs in (i) is in the sense of Eon \cite{eon-2011}. This requires that there is a graph isomorphism $G \to G'$, induced by $\gamma :V \to V'$ and a group isomorphism
$\pi:T \to T'$ such that 
\[
\gamma(g(v)) = \pi(g)(\gamma(v)), \quad v \in V, g\in T. 
\]

\begin{prop}\label{p:isomorphiclatticenets}
Let $(G,T), (G', T')$ be 3-periodic graphs with labelled quotient graphs $\H(i)=(H,\lambda), \H(i')=(H',\lambda')$ arising from isomorphisms $i:T \to \bZ^3$ and $i':T' \to \bZ^3$. Then the following statements are equivalent.
\medskip

(i) $(G,T)$ and $(G', T')$ are isomorphic
as 3-periodic graphs.
\medskip

(ii) There is a graph isomorphism $\phi: H \to H'$ and {$X \in GL(3,\bZ)$ with $|\det X|=1$ such} that $\lambda'(\phi(e))= X(\lambda(e))$ for all directed edges $e$ of $\H(i)$. 
\medskip
\end{prop}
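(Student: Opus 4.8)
The plan is to treat the labelled quotient graph $\H(i)$ as a combinatorial presentation of the periodic graph and to translate each of the two conditions into the other through this dictionary; the upshot is that $(H,\lambda)$, taken up to graph isomorphism, the $GL(3,\bZ)$-action on labels, and the choice of orbit representatives, is a complete invariant of $(G,T)$. First I would record the correspondence built into the construction of $\H(i)$. After identifying $T$ with $\bZ^3$ via $i$ and writing each vertex of $G$ as a pair $(v_a,g)$ with $v_1,\dots,v_n$ a fixed set of $T$-orbit representatives, a directed edge $e$ of $H$ from $v_a$ to $v_b$ with label $\lambda(e)$ corresponds exactly to the $T$-orbit of the edge of $G$ joining $(v_a,0)$ to $(v_b,i^{-1}(-\lambda(e)))$, and every edge of $G$ is a $T$-translate of precisely one such representative edge. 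Thus $(G,T)$ together with $i$ is recovered from $(H,\lambda)$, and symmetrically $(G',T')$ from $(H',\lambda')$.

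For the implication (ii) $\Rightarrow$ (i), given $\phi$ and $X$ I would set $\pi := (i')^{-1}\circ X\circ i$; since $X\in GL(3,\bZ)$ with $|\det X|=1$ it is invertible over $\bZ$, so $\pi$ is a group isomorphism $T\to T'$. I then define $\gamma$ on vertices by $\gamma((v_a,g)) := (\phi(v_a),\pi(g))$, that is, by sending each representative $v_a$ to the representative $\phi(v_a)$ and extending $\pi$-equivariantly, so that $\gamma(g(v))=\pi(g)(\gamma(v))$ holds by construction. It remains to check that $\gamma$ carries edges to edges, and here the dictionary does the work: the representative edge for $e$ maps under $\gamma$ to the edge of $G'$ joining $(\phi(v_a),0)$ to $(\phi(v_b),(i')^{-1}(-X\lambda(e)))$, and the hypothesis $\lambda'(\phi(e))=X\lambda(e)$ identifies this with the representative edge for $\phi(e)$ in $\H(i')$. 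As $\gamma$ is a vertex bijection and is $T$-equivariant, it is an isomorphism of $3$-periodic graphs, giving (i).

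For (i) $\Rightarrow$ (ii) I would run this backwards: set $X:=i'\circ\pi\circ i^{-1}$, which is an automorphism of $\bZ^3$ and hence lies in $GL(3,\bZ)$ with $|\det X|=1$, and let $\phi:H\to H'$ be the map induced by $\gamma$ on $T$-orbits. Tracking a representative edge through $\gamma$ and reading off its label in $\H(i')$ yields $\lambda'(\phi(e))=X\lambda(e)+(c_a-c_b)$, where $c_a\in\bZ^3$ records the $T'$-shift of $\gamma(v_a)$ away from the chosen representative of its orbit. This coboundary term $c_a-c_b$ is the one genuine subtlety, and the step I expect to be the main obstacle: it reflects the fact that $\lambda$ and $\lambda'$ depend on the choice of orbit representatives, and $\gamma$ need not send representatives to representatives. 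I would remove it by exercising exactly this representative freedom, taking $\gamma(v_1),\dots,\gamma(v_n)$ as the orbit representatives defining $\lambda'$ (equivalently, absorbing $c_a$ by replacing $v_a$ with $(v_a,-\pi^{-1}(i'^{-1}(c_a)))$ on the $G$-side); with representatives so aligned every $c_a$ vanishes and the required identity $\lambda'(\phi(e))=X\lambda(e)$ holds exactly. I would note that this is precisely the standard basepoint bookkeeping for labelled quotient graphs, and that once it is handled both implications follow immediately from the dictionary of the first paragraph.
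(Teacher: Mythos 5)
Your argument follows the same route as the paper's (very terse) proof: for (i)$\Rightarrow$(ii) the paper also takes $X$ to be the matrix of $i'\circ\pi\circ i^{-1}$ (it writes this as the $\tilde X$ with $\tilde X\circ(i\circ\pi^{-1})=i'$), and declares the rest elementary. What you add, and what the paper omits entirely, is the coboundary term $c_a-c_b$ coming from the fact that $\gamma$ need not carry the chosen orbit representatives of $(G,T)$ to those of $(G',T')$. This is a genuine issue, not mere bookkeeping: the labelled quotient graph is \emph{not} determined by $i$ alone (despite the paper's remark to that effect) but also by the choice of representatives, and changing representatives alters $\lambda$ by exactly such a coboundary. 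Moreover the coboundary cannot in general be killed by varying $\gamma$: composing with translations of $T'$ shifts all $c_a$ by the same vector, so for instance the two labelled quotient graphs of {\bf dia} with connecting-edge labels $\{(0,0,0),(1,0,0),(0,1,0),(0,0,1)\}$ and $\{(1,1,1),(0,1,1),(1,0,1),(1,1,0)\}$ come from the same $(G,T)$ with different representatives, yet no $X\in GL(3,\bZ)$ maps one label set onto the other (the first contains $(0,0,0)$, the second does not). So your representative-realignment step is the correct repair, but you should state explicitly that it amounts to reading the proposition with $\H(i)$ defined only up to the choice of orbit representatives (equivalently, with $\lambda'$ determined up to coboundary); under the literal reading, with both $\lambda$ and $\lambda'$ fixed in advance, the implication (i)$\Rightarrow$(ii) as stated fails and only the cohomologous version holds. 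With that caveat made explicit your proof is complete, and the (ii)$\Rightarrow$(i) direction via $\pi:=(i')^{-1}\circ X\circ i$ and the equivariant extension of $\phi$ is exactly right.
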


\begin{proof}That (i) follows from (ii) is elementary. 
%On the other hand condition (i) requires that there is a graph isomorphism $G \to G'$, induced by $\gamma :V \to V'$ and a group isomorphism
%$\pi:T \to T'$ such that 
%\[
%\gamma(g(v)) = \pi(g)(\gamma(v)), \quad v \in V, g\in T. 
%\]
On the other hand consider and isomorphism between $(G,T)$ and $(G', T')$. The basis choice $i:T\to \bZ^3$ and the map $\pi$ imply a basis choice $\hat{i}=i\circ \pi^{-1}$ for $T'$. Let 
$X$ be the matrix, with transformation $\tilde{X}$, such that $\tilde{X}\circ \hat{i}= i'$.  Then (ii) follows.
\end{proof}

In the case when $H=G/T$ and $H'=G/{T'}$ 
are bouquet graphs one can say much more.  Any graph isomorphism $\gamma:G \to G'$ lifts to a linear isomorphism between the model nets $\M, \M'$ determined by \emph{any} pair $T, T'$ of maximal periodic structures. See for example Theorem 3 of Kostousov \cite{kos}.  It follows that for bouquet quotient graph nets we have the following stronger theorem.

\begin{thm}\label{t:kostousov} Let $\M(p,i)$ and $\M(p',i')$ be model nets, with nodes on the integer lattice, for 3-periodic graphs $(G,T)$ and $(G,T')$ with bouquet quotient graphs. Then the following are equivalent.
\medskip

(i) $G$ and $G'$ are isomorphic as countable graphs.

(ii) $\M(p,i)$ and $\M(p',i')$ are affinely equivalent by a matrix $X$ in $GL(3,\bZ)$.
\end{thm}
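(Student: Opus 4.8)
The plan is to prove the two implications separately, treating $(ii)\Rightarrow(i)$ as routine and concentrating on $(i)\Rightarrow(ii)$, where the cited lifting theorem of Kostousov does the essential work. Throughout I would use that, since the quotient graphs are bouquets and the placements put nodes on the integer lattice, both model nets have node set exactly $\bZ^3$, with edges joining $k$ to $k+\lambda_j$ (respectively $k+\lambda_j'$) as $\lambda_j$ (respectively $\lambda_j'$) ranges over the loop labels recorded in $\H(i)$ (respectively $\H(i')$).

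For $(ii)\Rightarrow(i)$: if $\M(p,i)$ and $\M(p',i')$ are affinely equivalent by some $X\in GL(3,\bZ)$, the affine map realising the equivalence carries the node set of the first net bijectively onto that of the second and carries each edge onto an edge; restricting this map to nodes yields a bijection $V(G)\to V(G')$ that preserves adjacency, that is, a graph isomorphism $G\to G'$. Integrality of $X$ is what guarantees that the connection labels $\lambda_j$ are carried to labels $\lambda_j'$.

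For $(i)\Rightarrow(ii)$: given a graph isomorphism $\gamma\colon G\to G'$, I would invoke Theorem 3 of Kostousov \cite{kos} to lift $\gamma$ to an affine isomorphism $L$ of $\bR^3$, say $L(x)=Xx+b$, carrying the embedded net $\M(p,i)$ onto $\M(p',i')$. The only genuinely new content beyond this citation is an integrality check on the linear part $X$: since $L$ restricts to a bijection of the node set $\bZ^3$ of $\M(p,i)$ onto the node set $\bZ^3$ of $\M(p',i')$, we have $b=L(0)\in\bZ^3$ and $Xe_k=L(e_k)-b\in\bZ^3$ for each standard basis vector $e_k$, so $X$ is an integer matrix; applying the same reasoning to $L^{-1}$ shows $X^{-1}$ is integral, whence $X\in GL(3,\bZ)$. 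Together with the translation by $b$ this exhibits the required affine equivalence by $X$.

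The hard part is entirely contained in Kostousov's lifting theorem, which encodes the rigidity that an abstract graph isomorphism between single-vertex-orbit periodic lattice graphs is necessarily realised by an affine map of the ambient space. This is exactly the step that can fail once the quotient graph has more than one vertex, which is why the theorem is stated only for bouquet quotient graphs; granting that input, the remaining work is the elementary verification that the linear part respects $\bZ^3$, which is immediate from both model nets having their nodes precisely on the integer lattice.
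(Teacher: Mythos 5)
Your proposal is correct and follows essentially the same route as the paper, which derives the theorem directly from Kostousov's Theorem 3 (the lifting of a graph isomorphism between bouquet-quotient periodic graphs to an affine map between the model nets) without writing out a separate proof. Your added integrality check on the linear part, via $L(0)\in\bZ^3$ and $Xe_k=L(e_k)-L(0)\in\bZ^3$ together with the same argument for $L^{-1}$, is a sensible piece of bookkeeping that the paper leaves implicit.
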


\begin{defn}
A  (proper) linear 3-periodic net $\N$ is a lattice net if its set of nodes is a lattice in $\bR^3$.
\end{defn}

 Equivalently $\N$ is a lattice net if its quotient graph is a bouquet graph. One may also define a \emph{general lattice net} in $\bR^3$ as a (not necessarily proper) embedded net  whose quotient graph is a bouquet graph.
Theorem \ref{t:kostousov} shows that lattice nets (even general ones) are classified up to affine equivalence by their topologies.
In Theorem \ref{t:1vertexQG} we obtain a proof of this 
in the depth 1 case, independent of Kostousov's theorem, through a case-by-case analysis. Also we show that for the connected depth 1 lattice nets there are 19 classes.

In principle Proposition \ref{p:isomorphiclatticenets} could be used as a basis for a computational classification of periodic nets with small quotient graphs with a depth 1 labelling. %{\color{magenta} EDITED. OK ?:} 
However we note that there are more practical filtering methods such as those underlying Proposition \ref{p:117topologies} which determines the 117 connected topologies associated with certain depth 1 nets which are supported on 2 parallel vertex lattices in a bipartite manner.

\section{Linear graph knots}\label{s:graphknots} 
%We now consider linear graph knots associated with linear periodic nets. {\color{cyan} this section more readable hopefully}

Let $H$ be a multigraph, that is, a general finite graph, possibly with loops and with an arbitrary multiplicity of "parallel" edges between any pair of vertices. Then a \emph{graph knot} in $\bR^3$ is a faithful geometric representation of $H$ where the vertices $v$ are represented as distinct points $p(v)$ in $\bR^3$ and  each edge $e$ with vertices $v, w$ is represented by a  smooth path $\tilde{p}(e)\subseteq \bR^3$, with endpoints $p(v), p(w)$. Such paths are required to be disjoint except possibly at their endpoints. Thus a graph knot $K$ is formally a triple $K=(H,p,\tilde{p})$, and we may also refer to this triple as a \emph{spatial graph} or as a \emph{proper placement} of $H$ in $\bR^3$.
It is natural also to denote a graph knot $K$ simply as a pair $(N,S)$, where $N$ is the set of vertices, or \emph{nodes}, $p(v)$ in $\bR^d$, and $S$ is the set of nonintersecting paths 
%(or \emph{strings}) 
$\tilde{p}(e)$.
We remark that spatial graphs feature in the mathematical theory of  intrinsically linked connected graphs \cite{con-gor}, \cite{koh-suz}.

One can similarly define a graph knot $K$ in any smooth manifold and of particular relevance is the Riemannian 
manifold known as a \emph{flat 3-torus}. This is essentially the topological 3-torus identified naturally with the set $[0,1)^3$ and the topology, in the usual mathematical sense, is the natural one associated with continuity of the quotient map $\bR^3 \to [0,1)^3$.  Moreover we define a \emph{line segment} in the flat torus to be the image of a line segment in $\bR^3$ under this quotient map.
The curiosity here is that such a flat torus line segment may  appear as the union of several line segment sets in $[0,1)^3$.

We formally define a \emph{linear graph knot in the flat torus}  to be a triple $(H,p,\tilde{p})$, or a pair $(N, S)$, where the vertices, or nodes, $p(v)$ lie in $[0,1)^3$ and the paths, or edges, $\tilde{p}(e)$ are essentially disjoint flat torus line segments. Intuitively, this is simply a finite net in the flat 3-torus with linear nonintersecting edges.

%As well as proper placements, defining linear graph knots  $(G,p,\tilde{p})$, we shall also encounter improper placements where $p$ is injective but some pairs of edges fail to be essentially disjoint, with the relative interior of one edge having nontrivial intersection with another edge. 
%A standard mathematical notation for the topological $3$-torus is $\bT^3$ and we also use this notation for the flat torus. 
%The notation here reflects the fact that the $d$-torus may be represented as the direct product of $d$ copies of a unit circle $\bT$ where, according to the context, $\bT$  denotes the unit circle in $\bR^2$ or $\bC$.

We now associate a linear graph knot $K$ in $[0,1)^3$ with an embedded net $\N$ with a specified periodicity basis $\ul{a}$.
Informally, this is done
%Intuitively, the linear graph knot $K$ of the pair $(\N, \ul{a})%%$, defined  formally in Definition \ref{d:lineargraphknot}, 
%%in $[0,1)^3$ 
%is given 
by replacing $\N$ by its affine normalisation $\N'$, wherein $\ul{a}$ is rescaled to the standard basis, and defining $K$ as the intersection of the body $|\N'|$ with $[0,1)^3$. That is, one takes the simplest model net $\N'$ for $\N$ and ignores everything outside the cube $[0,1)^3$.
%Alternatively one may take the intersection of $|\N|$ with the (rhomboid) unit cell $P_0(\ul{a})$ and then rescale this cell to a cubical one.

%We now give the formal definition of  the linear graph knot determined by a linear 3-periodic net $\N$ and a choice of periodic structure basis $\ul{a}$.

For the formal definition, let $(F_v, F_e)$ be a \emph{motif}  for  $(\N, \ul{a})$,  where $F_v\subset N$ (resp. $F_e\subset S$) is a finite set of representatives for translation classes of nodes (resp. edges) of $\N= (N,S)$, with respect to $\ul{a}$. 
%, and also, when the context is made clear, for the associated labelled quotient graph. 
Let $\pi: \bR^3 \to [0,1)^3$ be the natural quotient map associated with the ordered basis $\ul{a}$. This is a composition of the linear map for which $\ul{a}$ maps to the standard right-handed basis, followed by the quotient map. Define $p: F_v \to [0,1)^3$ to be the induced injection and  $\tilde{p}: F_e \to [0,1)^3$ to be the induced map from closed line segments to closed line segments of the flat torus $[0,1)^3$.

\begin{defn}\label{d:lineargraphknot} Let $H$ be the  quotient graph for the pair $(\N, \ul{a})$.
The triple
$(H, p, \tilde{p})$ is the \emph{linear graph knot}
%, or \emph{flat torus placement},
of  $(\N, \ul{a})$ and is denoted as $\lgk(\N,\ul{a})$.
\end{defn}

Since $\N$ is necessarily proper, with essentially disjoint edges, the placement
$(H, p, \tilde{p})$ has essentially disjoint edges and so is a linear graph knot. %{\color{red}DELETE (redundant and distracting): Indeed, if edges $e, e'$ of $\N$ are such that $\tilde{p}(e)$ and $\tilde{p}(e')$ are not essentially disjoint in $[0,1)^3$ this means that some translate of $e$ (associated with the periodic structure basis $\ul{a}$) is not essentially disjoint from some translate of $e'$. But these translates are distinct edges in $\N$, contrary to properness.

%The linear graph knot of a linear 3-periodic net $\N$ depends on the periodic structure basis. However, it is independent of the choice of motif. }
 
Note that the  linear graph knot determines
uniquely the net $\N'$ which in fact can be viewed as its covering net. It follows immediately that if
$\lgk(\N,\ul{a}) = \lgk(\M,\ul{b})$ then $\N$ and $\M$ are linear periodic nets which are orientedly affine equivalent. %$GL^+(a,\bZ)$-equivalent.

%\subsection{Examples}
We now give some simple examples together with perspective illustrations. Such illustrations are unique up to translations within the flat $3$-torus and so it is always possible to arrange that the nodes are interior to the open unit cube. In this case the 3D diagram  reveals their valencies. On the other hand, as we saw in the partial body examples in Section \ref{s:mainexample} it can be natural to normalise and simplify the depiction by a translation which moves a node to the origin.

\begin{eg}
%{\bf $\N_{\rm pcu}$.} 
The simplest proper linear $3$-periodic net is the primitive cubic net $\N= \N_{\rm pcu}$. We may normalise this so that the node set  is a translate of the set $\bZ^3$. The standard primitive periodic structure basis gives the graph knot $\lgk(\N,\ul{b})$, which we denote as $K_{\rm pcu}$ and which is illustrated in  Figure \ref{f:pcu}.  The 3 ``line segment" edges in the flat torus are here depicted by 3 pairs of line segments.
The  quotient graph of $\N_{\rm pcu} $, which is also the underlying graph of $K_{\rm pcu} $, has one vertex and 3 loop edges. 
Note that if the node is translated to the origin 
%in the corner of $[0,1)^3$ 
then the depiction of the loop edges is given by 3 axial line segments.

\begin{center}
\begin{figure}[ht]
\centering
\includegraphics[width=4cm]{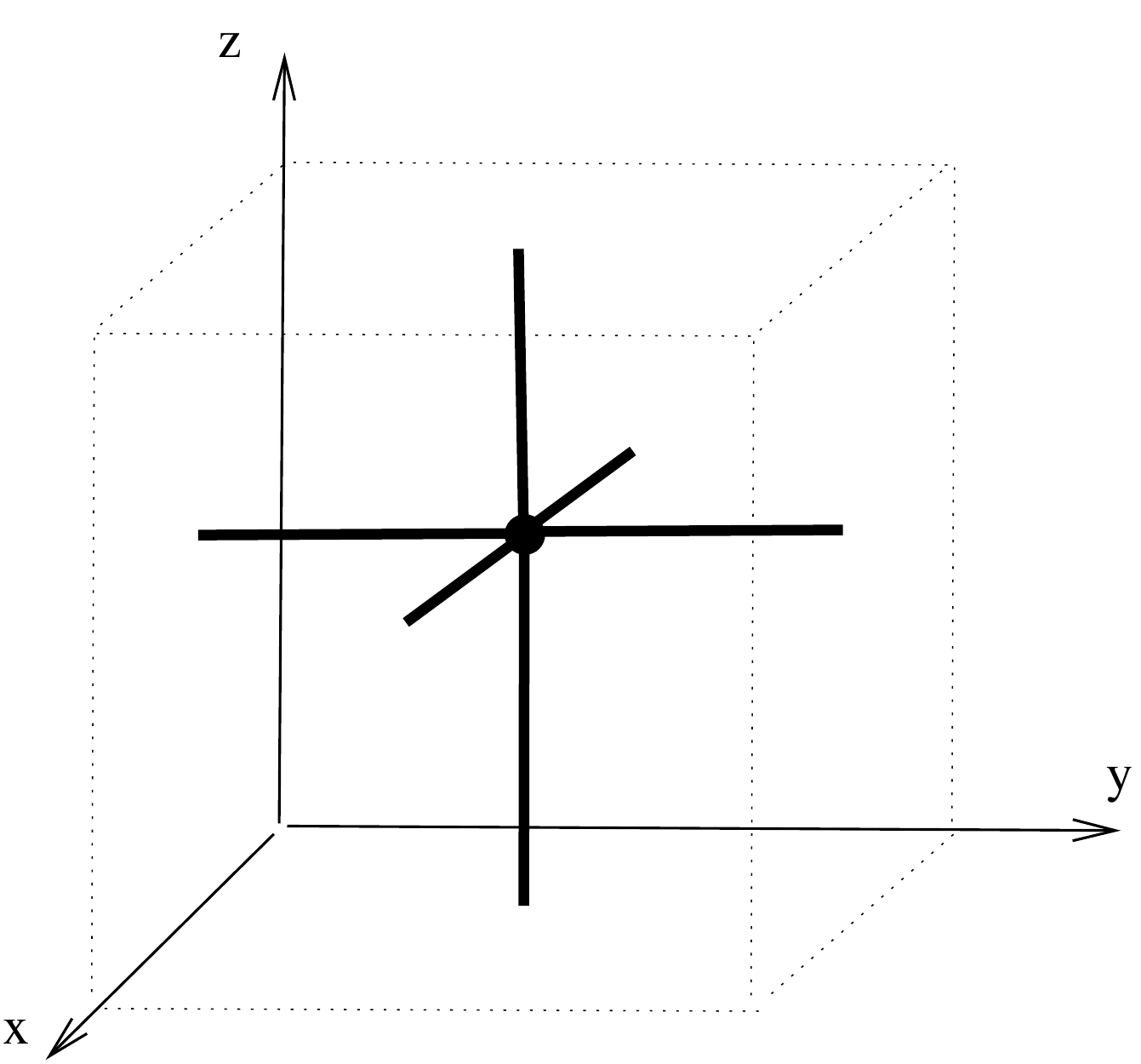}
\caption{The linear graph knot $K_{\rm pcu}$ on the flat 3-torus $[0,1)^3$.} 
\label{f:pcu}
\end{figure}
\end{center}

By taking a union of $n$  disjoint generic translates (within $[0,1)^3$) of $K_{\rm pcu} $
one obtains the linear graph knot of an associated multicomponent linear net. In Theorem \ref{t:multigridcount} we compute the number of periodic  isotopy classes of such nets and the graph knot perspective is helpful for the proof of this.
\end{eg}

\begin{eg}
Figure \ref{f:bcu_and_srs} shows linear graph knots (or finite linear nets) on the flat torus for the maximal symmetry nets $\N_{\rm bcu}$ and $\N_{\rm srs}$.
Each is determined by a natural primitive right-handed depth 1 periodicity basis $\ul{a}$  which, by the definition of $\lgk(\N,\ul{a})$, is normalised to $\ul{b}$. The quotient graphs for these examples  are, respectively, the bouquet graph with 4 loop edges and the complete graph on 4 vertices. The periodic extensions of these graph knots give well-defined {model nets}, say $\M_{\rm bcu}$ and $\M_{\rm srs}$ which are orientedly affinely equivalent to the maximal symmetry nets $\N_{\rm bcu}$ and $\N_{\rm srs}$.
\end{eg}
 
\begin{center}
\begin{figure}[ht]
\centering
\includegraphics[width=4cm]{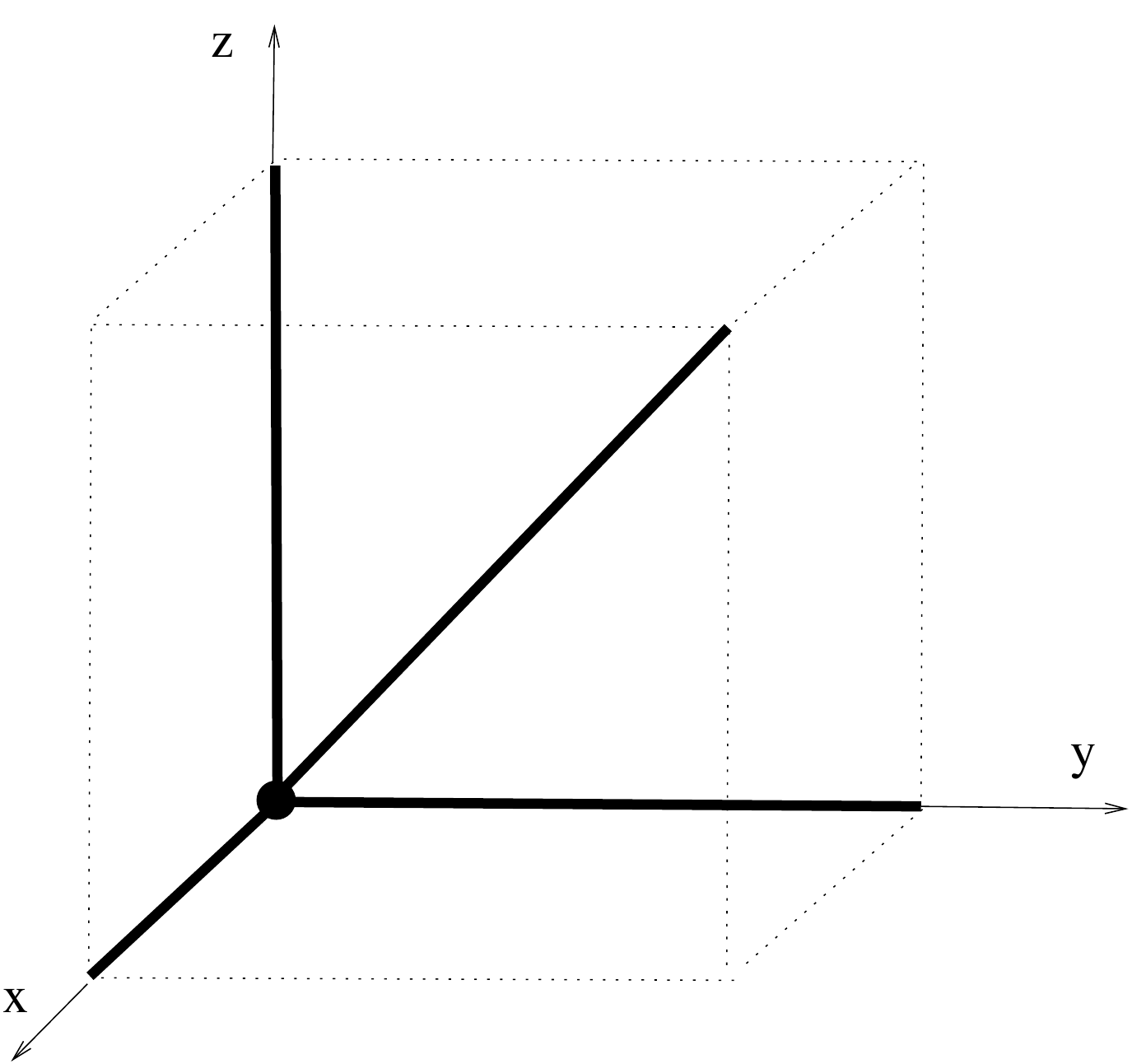}
\quad \quad
\includegraphics[width=4cm]{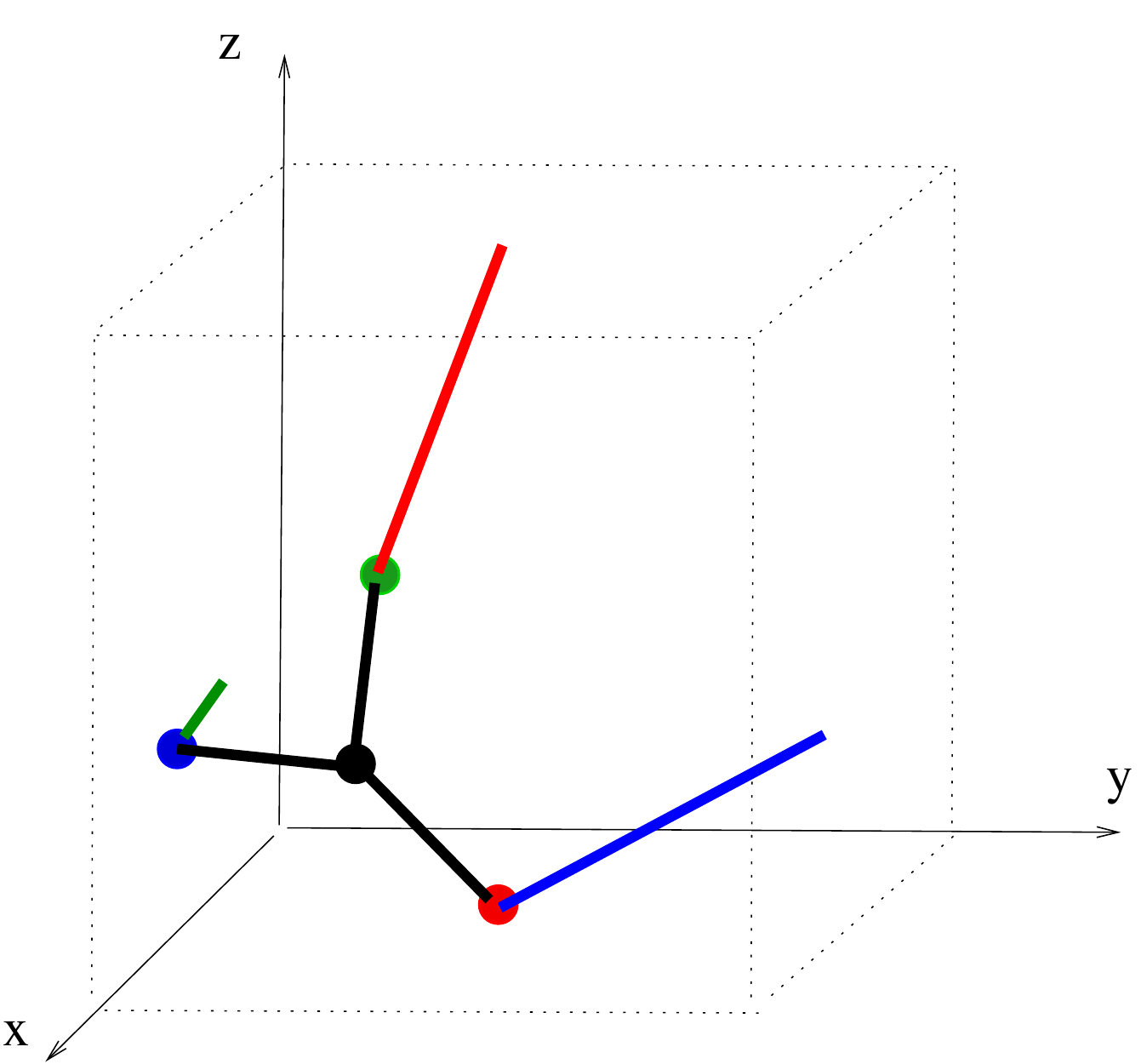}
\caption{Linear graph knots on the flat torus for {\bf bcu} and {\bf srs}.}
\label{f:bcu_and_srs}
\end{figure}
\end{center} 
 
\begin{eg} 
%{\bf $\N_{\rm dia}$}
The linear 3-periodic  net $\N_{\rm dia}$ for the diamond crystal net (with maximal symmetry) has a periodic structure basis $\ul{a}$  corresponding to 3 incident edges of a regular tetrahedron, and has a motif consisting of 2 vertices and 4 edges. The graph knot $K_{\rm dia}= \lgk(\N_{\rm dia},\ul{a})$ is obtained by (i) an oriented affine equivalence with a model net $\M_{\rm dia}$ with standard orthonormal periodic structure basis, and (ii) the intersection of $\M_{\rm dia}$ with $[0,1)^3$. This graph knot has an underlying graph  $H(0,4,0)$ (in the notation of Section \ref{s:furtherdirections}) with 2 vertices and 4 nonloop edges.

In Figures \ref{f:dia_graphs}, \ref{f:dia_graphs2}  we indicate 4  graph knots which define model nets each with underlying net (structure graph) equal to ${\bf dia}$. In fact
the graph knots $K_1, K_2$  are \emph{rotationally linearly isotopic} (see Definition \ref{d:knotisotopies}) in the sense that there is an isotopy defined by a motion of the central vertex of $K_2$ through the floor/roof which terminates at a graph knot which is the image of $K_1$ under a rotation automorphism.

\begin{center}
\begin{figure}[ht]
\centering
\includegraphics[width=4cm]{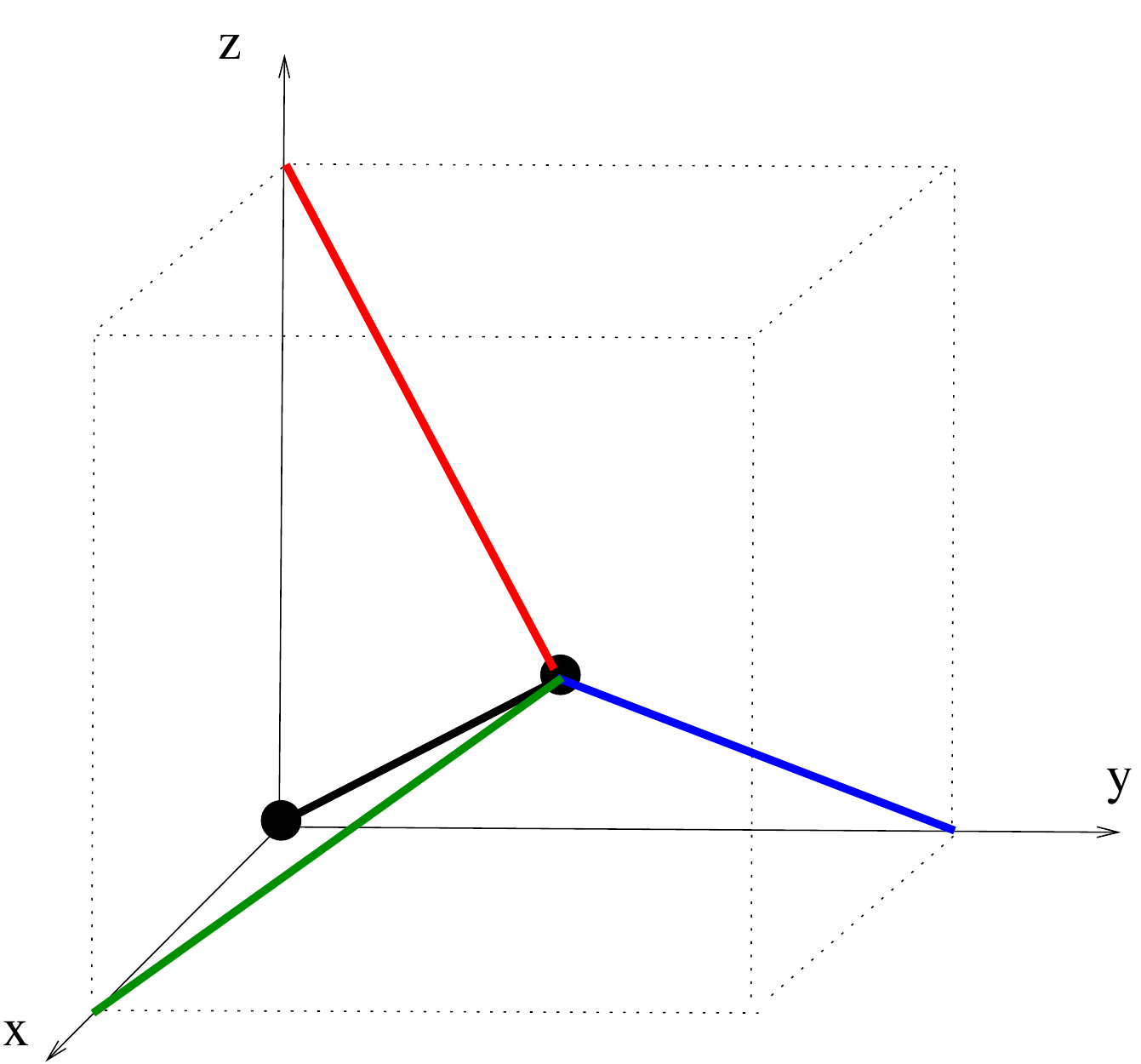}
\includegraphics[width=4cm]{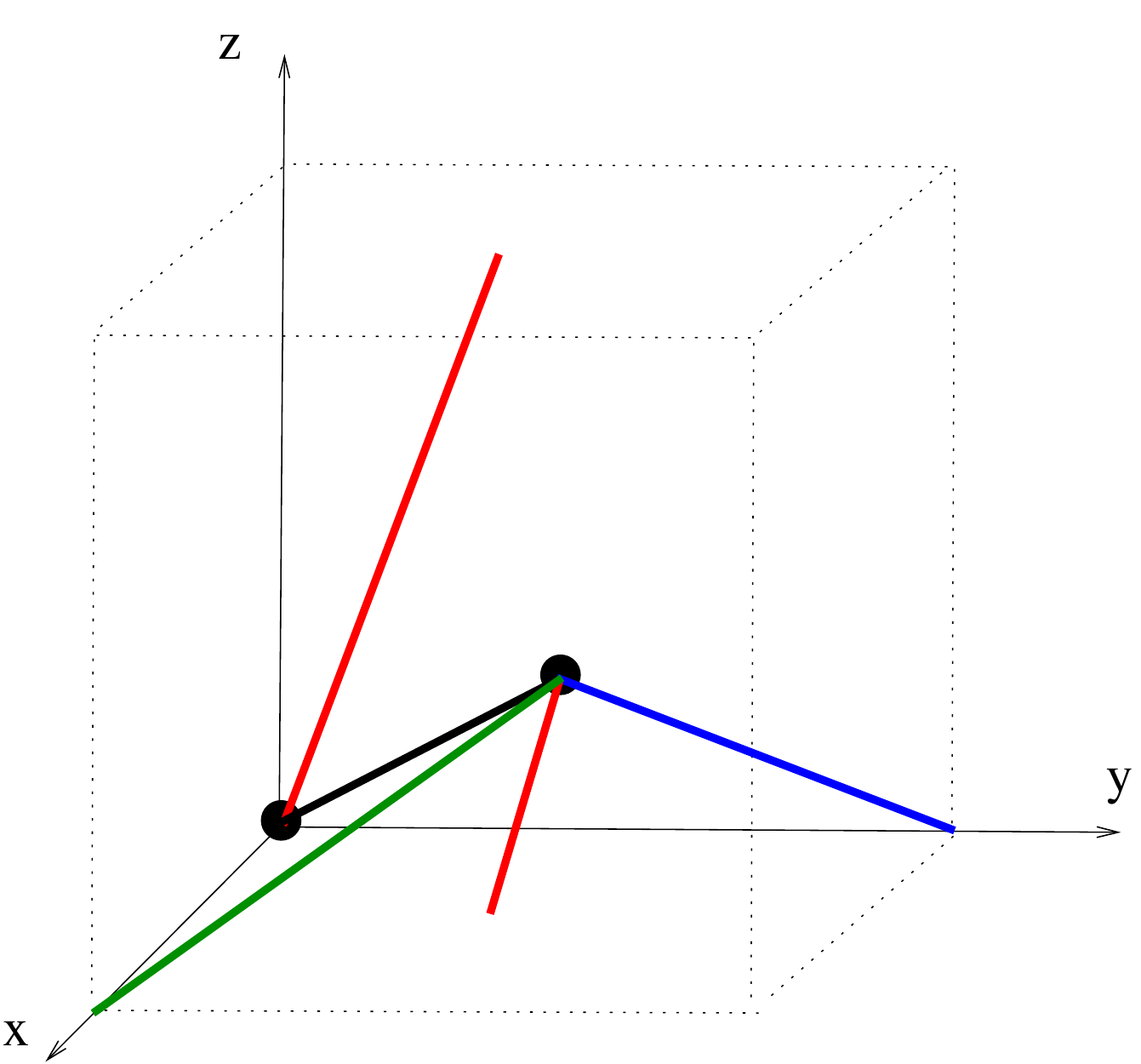}
\caption{Linear graph knots $K_1, K_2$ associated with {\bf dia}.}
\label{f:dia_graphs}
\end{figure}
\end{center}

On the other hand
$K_2$ and $K_3$ are \emph{linearly isotopic} in terms of a motion of the vertex of $K_2$ at the origin to the position of the left hand vertex of $K_3$. It follows from this that the associated model nets $\M_1, \M_2, \M_3$ are strictly periodically isotopic, simply by taking the periodic extension of these isotopies to define periodic isotopies.

\begin{center}
\begin{figure}[ht]
\centering
\includegraphics[width=4.5cm]{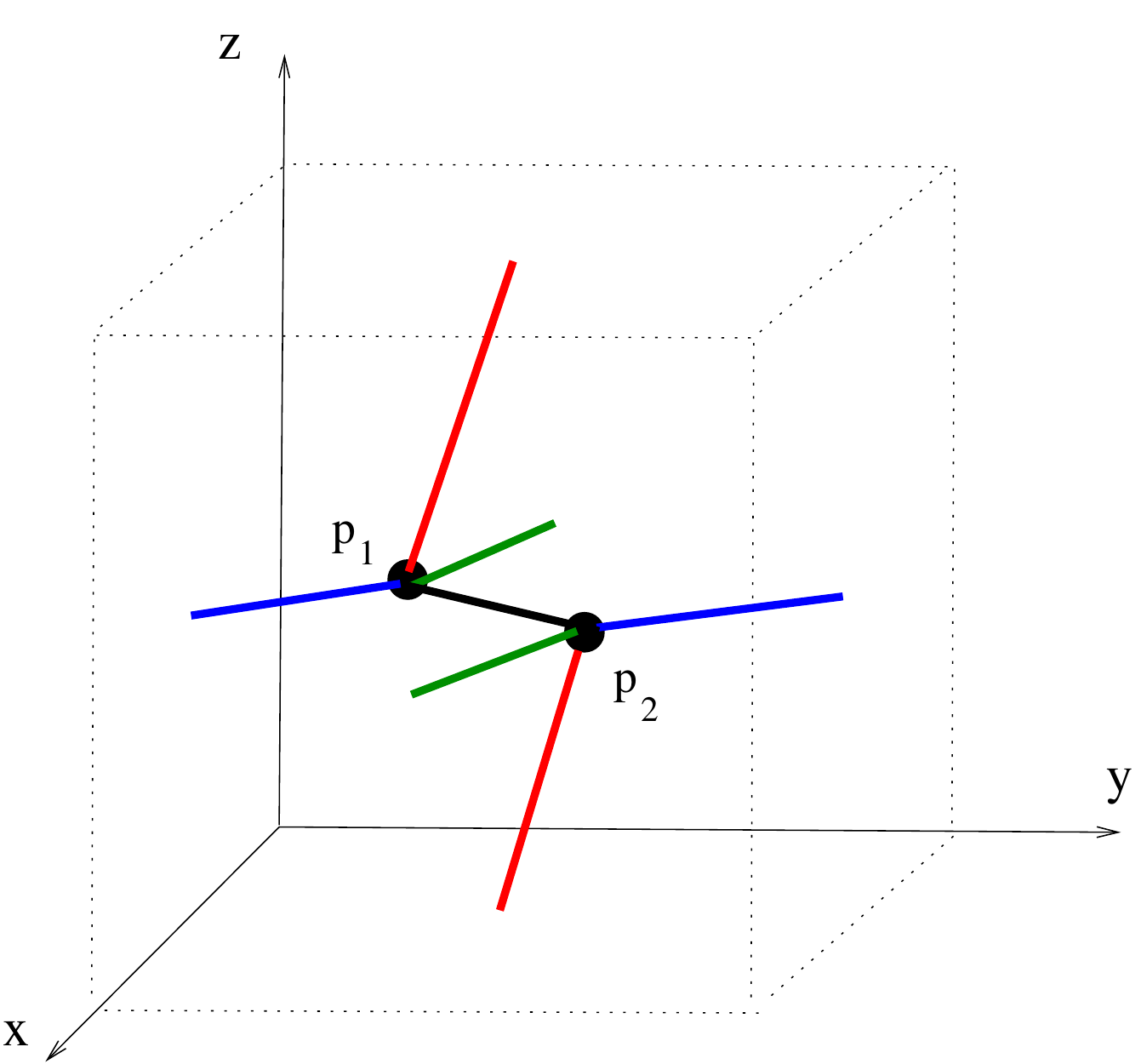}
\includegraphics[width=4.5cm]{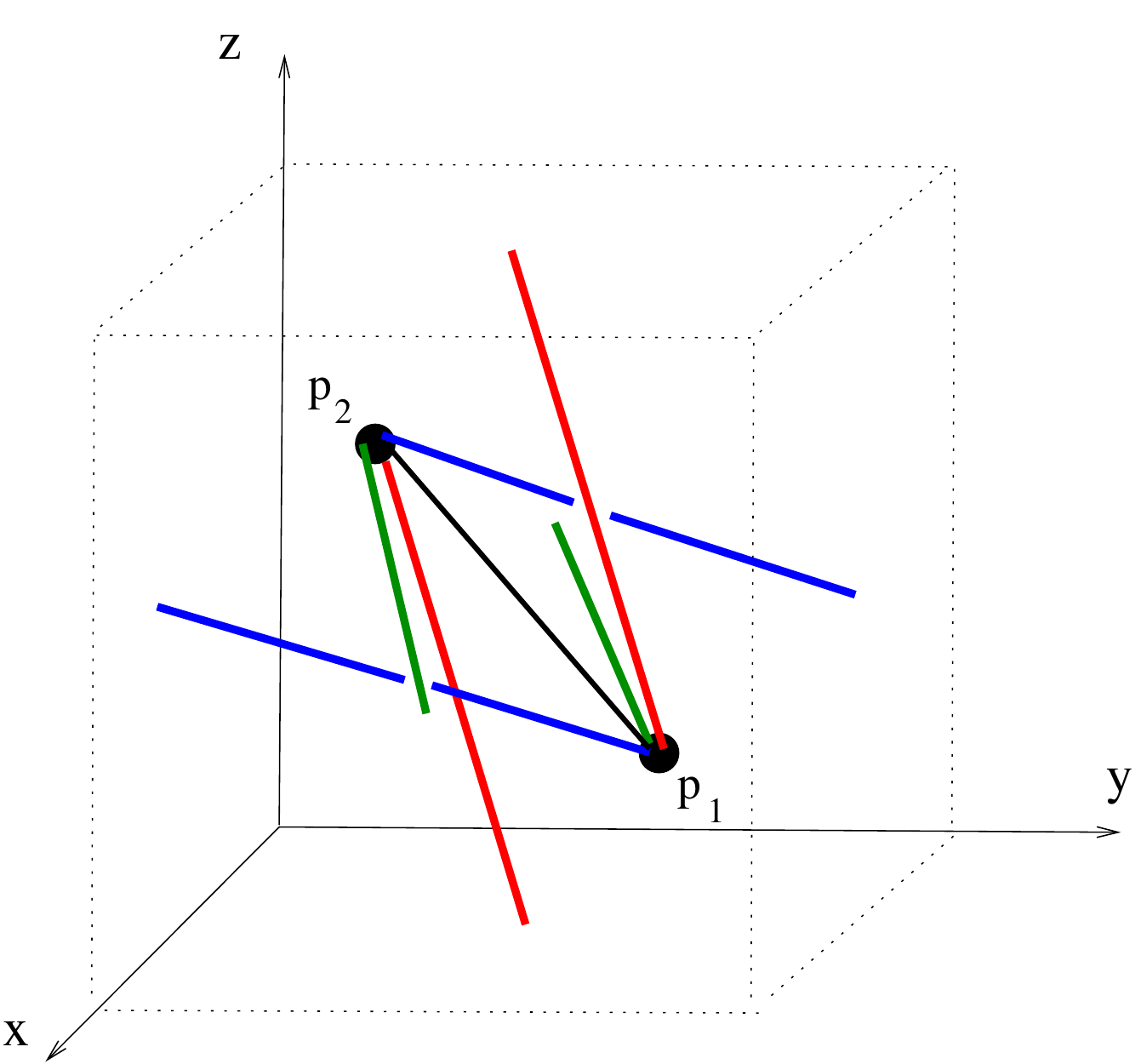}
\caption{Linear graph knots $K_3, K_4$ associated with {\bf dia}.}
\label{f:dia_graphs2}
\end{figure}
\end{center}

In contrast to this observe first that the linear graph knot $K_4$ is obtained from $K_3$ by a continuous motion of the nodes $p_1$ and $p_2$ to their new positions in the 3-torus. Such a motion defines a \emph{linear homotopy} in the natural sense.
% between the knots (as well as a \emph{strict periodic homotopy between their model nets}.
%See also Section \ref{ss:furtherequivalences}. 
The (uniquely) determined edges of the intermediate knots in this case inevitably cross at some point in the motion so these linear  homotopies are not linear isotopies. The model net $\M_4$ for the knot $K_4$ is in fact not periodically isotopic to the unique maximal symmetry embedding $\N_{\rm dia}$, and so is self-entangled. We show this in Example \ref{eg:self_entgld_dia}. 
%where we discuss self-entangled nets more fully.     
%{\color{red}REFERENCE?}
\end{eg}

In the model nets of the examples above we have taken a primitive periodic structure basis with minimal adjacency depth. In view of this the represented edges between adjacent nodes in these example have at most 2 \emph{diagramatic} components, that is they reenter the cube at most once. In general the linear graph knot associated with a periodic structure basis of depth 1 has edges which can reenter at most 3 times.

\begin{rem}\label{r:mirror2vertex} We shall consider families of embedded nets  up to oriented affine equivalence and up to periodic isotopy. In general there may exist enantiomorphic pairs, that is, mirror images $\N, \N'$ which are not equivalent. This is the case, for example, for embeddings of {\bf srs}. However, such inequivalent pairs do not exist if the quotient graph is a single vertex (lattice nets) or a pair of vertices with no loop edges (double lattice nets with bipartite structure). This becomes evident in the latter case for example on considering an affine equivalence with a model net for which the point $(1/2,1/2,1/2)$ is the midpoint of the 2 representative nodes in the unit cell. This midpoint serves as a point of inversion for the model net (or, equivalently, its graph knot). The  graph knots in Figure \ref{f:dia_graphs2} indicate such centered positions.
\end{rem}

\begin{rem}
We have observed that for a linear $3$-periodic net the primitive right-handed periodicity bases $\ul{a}$ are determined up to transformations by matrices in $SL(3,\bZ)$. Such matrices induce chiral automorphisms of the flat $3$-torus which preserve the linear structure. Accordingly (and echoing the terminology for embedded nets) it is natural to define two graph knots on the same flat torus to be \emph{orientedly (or chirally) affinely equivalent} if they have translates which correspond to each other under such an automorphism. Thus, to each linear 3-periodic net
$\N$ one could associate its  \emph{primitive graph knot}, %which we denote as pgk$(\N)$ 
on the understanding that it is only determined up to oriented affine equivalence.
\end{rem}

\begin{rem} 
We remark that triply periodic surfaces may be viewed as periodic extensions of compact surfaces on the flat 3-torus.  It follows that the tilings and triangulations of these compact surfaces generate special classes of linear 3-periodic nets. Such nets have been considered, for example, in the context of periodic hyperbolic surfaces and minimal surfaces, where the methods of hyperbolic geometry play a role in the definition  of isotopy classes \cite{eva-et-al}, \cite{hyd-et-al-2003}. See also Hyde and Delgado-Friedrichs \cite{hyd-del}.
%{\color{magenta}Reference New Evans-Robins-Hyde}
\end{rem}

\section{Isotopy equivalence}\label{s:isotopyForNets}
Consider the following informal question. 
\bigskip

\emph{When can  $\N_1$ be deformed into $\N_2$ by a continuous path with no edge crossings}  ?
\medskip

%(ii) When is a multicomponent embedded net $\N$ entangled? 
%\medskip

This question is not straightforward to approach for two reasons. Firstly, a linear periodic net may contain, as a finite subnet, a linear realisation of an arbitrary knot or link. %So linearity offers no simplifications. 
For example, the components of $\N$  could be translates of a linear realisation of an arbitrary finite knot where all vertices have degree $2$. (Here $\N$ would have dimension type $\{3;0\}$.) Thus,  resolving the question by means of discriminating invariants is in general as hard a task as the corresponding one for knots and links. Secondly, the rules for such deformation equivalence need to be decided upon, and, a priori, the deformation equivalence classes are dependent on these rules. 

%To proceed in a well-defined way we consider first the notion of \emph{periodic isotopy equivalence} for linear $d$-periodic nets. 
%We comment on other forms of ambient isotopy in Remark \ref{r:per_def_equiv}.

%\subsection{Periodic isotopy equivalence}
The following definition may be regarded as the natural form of isotopy equivalence appropriate for the category of embedded periodic nets in 3 dimensions which have line segment bonds, no crossing edges and no coincidences of node locations (node collisions). 
%We comment on other formulations of isotopy and homotopy equivalence in Section \ref{ss:furtherequivalences}.

\begin{defn}\label{d:deformationequivalentnets} Let $\N_0$ and $\N_1$ be proper linear $3$-periodic nets in $\bR^3$. Then   $\N_0$ and $\N_1$  are  \emph{periodically isotopic}, or have the same \emph{periodic isotopy type}, if there is a family of such {(noncrossing)} nets, $\N_t$, for $0<t<1$, for which
\medskip

(a) there is a continuous path of bases of $\bR^3$, $t \to \ul{a}^t$, $0\leq t\leq1$, where $\ul{a}^t$ is a right-handed  periodicity basis for $\N_t$,
\medskip

(b) there are bijective functions
$
f_t:|\N_0| \to |\N_t|, 
$
{for} $ 0\leq t\leq 1$, which map nodes to nodes,  such that,
\medskip

(i) $f_0$ is the identity map {on $|\N_0|$},

(ii) for each {node} point $p$ in $|\N_0|$ the map $t \to f_t(p)$ is continuous,

(iii) {the restriction of} $f_t$ to each edge $[a,b]$ is the unique affine map onto the image edge, $[f_t(a), f_t(b)]$  in $|\N_t|$ determined by linear interpolation. 

\end{defn}

 We make a number of immediate observations:

1) The condition (iii) could be omitted but is a conceptual convenience in that it implies that each map $f_t$ from the body of $\N_0$ to the body of $\N_t$ is determined by its restriction to the nodes.
%In view of the \emph{locally affine} (or \emph{locally linear}) requirement in (b)(iii), each bijection $f_t$  is  determined by its restriction to the nodes of $\N_0$. 

2) The definition applies to entangled nets with several connected components and in this case the isotopy can be viewed as a set  of $n$ independent periodic isotopies, for the $n$ components, with the same time parameter $t$ and periodicity bases $\ul{a}^t$, and subject only to the noncollision of components for each value of $t$.

3) Every such net $\N_0$ is periodically isotopic to a model net
$\N_1=\M$ with periodicity basis $\ul{b}$. Indeed, for any right-handed periodicity basis $\ul{a}$ for $\N$  there is an elementary isotopy equivalence from $(\N, \ul{a})$ to a {unique} pair $(\M,\ul{b})$ which is determined  by a path of transformations from $GL^+(d,\bR)$ which in turn is determined by any continuous path of bases from $\ul{a}$ to $\ul{b}$.

4) If $\N_0$ and $\N_1$ are orientedly affine equivalent then they are periodically isotopic since the topological group $GL^+(3,\bR)$ is path-connected.

\medskip

We also define the pair $(\N_0,\ul{a})$ to be \emph{strictly periodically isotopic} to  the pair $(\N_1, \ul{b})$ if there is an isotopy equivalence $(\ul{a}^t, (f_t))$, as in parts (a), (b) of the definition with $\ul{a}^0 = \ul{a}$ and $\ul{a}^1= \ul{b}$.  In view of the previous observations we have the following
\medskip

 \emph{Equivalent definition.} The embedded periodic nets $\N$ and $\N'$ in $\bR^3$  are periodically isotopic if  there is a rescaling and rotation of $\N'$ to a net $\N''$ so that
(i) $\N$ and $\N''$ have a common embedded translation group with basis $\ul{a}$, and
(ii) $(\N, \ul{a})$   and $(\N'',\ul{a})$ are strictly periodic isotopic.
\medskip

Strict periodic isotopy is evidently an equivalence relation on the set of pairs $(\N,\ul{a})$. Periodic isotopy is also an equivalence relation  but this is not so immediate. However, as the next proof shows, one can replace a pair of given periodic isotopies, between $\N_0$ and $\N_1$ and between
$\N_1$ and $\N_2$, by a new pair such that the paths of periodicity bases can be concatenated, and so provide an isotopy between $\N_0$ and $\N_2$.  

\begin{thm}\label{t:equivalencerelation}
Periodic isotopy equivalence is an equivalence relation on the set of proper linear $3$-periodic nets.
\end{thm}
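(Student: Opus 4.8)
The plan is to verify the three axioms in turn, with transitivity carrying essentially all of the difficulty. Reflexivity is immediate: for a proper net $\N$ take the constant family $\N_t=\N$, a fixed right-handed periodicity basis $\ul{a}^t=\ul{a}$, and $f_t=\mathrm{id}$. Symmetry is handled by reversing time. Given a periodic isotopy $(\ul{a}^t,(f_t))$ from $\N_0$ to $\N_1$, I would take the reversed basis path $\ul{a}^{1-t}$ together with $g_t:=f_{1-t}\circ f_1^{-1}:|\N_1|\to|\N_{1-t}|$. Then $g_0=f_1\circ f_1^{-1}=\mathrm{id}_{|\N_1|}$, each $g_t$ maps nodes to nodes and is affine on edges (a composite of edge-affine bijections), and the reversed basis path is again continuous and right-handed, so this realises a periodic isotopy from $\N_1$ to $\N_0$.

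For transitivity, suppose $P=(\ul{a}^t,(f_t))$ realises a periodic isotopy from $\N_0$ to $\N_1$ and $Q=(\ul{b}^t,(h_t))$ one from $\N_1$ to $\N_2$. The sole obstruction to concatenating the net-paths $\N_0\to\N_1\to\N_2$ is that the terminal basis $\ul{c}:=\ul{a}^1$ and the initial basis $\ul{c}':=\ul{b}^0$ are a priori \emph{different} periodicity bases of the junction net $\N_1$, so the juxtaposed path of bases need not be continuous. It is important to note that this mismatch cannot be repaired by inserting an isotopy that keeps $\N_1$ fixed and slides $\ul{c}$ to $\ul{c}'$: along any periodic isotopy the labelled quotient graph is constant (the translation subgroup of $\Aut(G(\N))$ realised by $\ul{a}^t$, and the integer edge-labels, are discrete incidence data that cannot jump without an edge collision, hence are locally constant in $t$), whereas $\mathrm{LQG}(\N_1;\ul{c})$ and $\mathrm{LQG}(\N_1;\ul{c}')$ will in general differ. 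The essential idea is therefore not to bridge on $\N_1$ at all, but to re-choose the basis path of $P$ \emph{globally}.

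First I would reduce to the case that every basis occurring in $P$ and $Q$ is primitive. This is legitimate because the realised translation group is constant along a periodic isotopy while the maximal lattice $L_{\N_t}$ varies continuously, so a continuous family of primitive periodicity bases $\ul{a}^t M^{-1}$ (with $M$ the fixed integer matrix recording the constant relative index) can be selected, yielding new isotopies between the \emph{same} nets. After this reduction $\ul{c}$ and $\ul{c}'$ are both primitive right-handed bases of the same maximal lattice of $\N_1$, so by Lemma~\ref{l:periodicstructures} they are related by an element of $SL(3,\bZ)$; write $\ul{c}'=\ul{c}\,M$ with $M\in SL(3,\bZ)$ the integer change-of-basis matrix. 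Now replace $P$ by $\tilde P$ having the \emph{same} net-path and the \emph{same} maps $f_t$, but with basis path $\ul{a}^t M$ (each new basis vector a fixed integer combination of the old ones). Since $\det M=1$, the triple $\ul{a}^t M$ is again a right-handed basis generating the same lattice $\langle\ul{a}^t\rangle$, hence a valid right-handed periodicity basis of $\N_t$; the path is continuous; and at $t=1$ it equals $\ul{c}\,M=\ul{c}'$. Thus $\tilde P$ is a periodic isotopy from $\N_0$ to $\N_1$ whose terminal basis is exactly the initial basis of $Q$.

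Finally I would concatenate $\tilde P$ and $Q$ after rescaling their time intervals to $[0,\tfrac12]$ and $[\tfrac12,1]$. The net-path is $\N_0\to\N_1\to\N_2$; the combined basis path is continuous because the two halves agree at $\ul{c}'$; and the tracking maps compose as $f_{2t}$ on the first interval and $h_{2t-1}\circ f_1$ on the second, agreeing at the midpoint since $h_0=\mathrm{id}_{|\N_1|}$, giving a continuous node-to-node, edge-affine family with $f_0=\mathrm{id}$. This produces a periodic isotopy from $\N_0$ to $\N_2$, completing transitivity. The main obstacle, and the crux of the argument, is precisely the junction mismatch: it cannot be absorbed on the fixed net $\N_1$ (the labelled quotient graph forbids it), only by the global $SL(3,\bZ)$-twist of the entire basis path of $P$, and it is for the applicability of Lemma~\ref{l:periodicstructures} to this twist that the preliminary reduction to primitive bases is required.
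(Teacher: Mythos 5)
Your reflexivity and symmetry arguments are fine, and you have correctly isolated the crux of transitivity: the two bases $\ul{c}=\ul{a}^1$ and $\ul{c}'=\ul{b}^0$ living on the junction net $\N_1$ need not agree, the mismatch cannot be repaired by an isotopy fixing $\N_1$, and the cure is a global integer twist of an entire basis path (which, as you note, always produces another valid right-handed periodicity basis path). Up to that point you are in step with the paper.

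The gap is in your reduction to primitive bases. You justify it by asserting that the maximal lattice $L_{\N_t}$ varies continuously along a periodic isotopy with constant relative index, so that a continuous family of primitive bases $\ul{a}^tM^{-1}$ exists for a fixed integer $M$. This is false: the maximal lattice can jump upward at isolated instants. For example, take $\N_t$ to be the $2$-grid consisting of $\M_{\rm pcu}$ together with its translate by $(\tfrac12+\epsilon(t),\tfrac12,\tfrac12)$, where $\epsilon$ is continuous, $\epsilon(\tfrac12)=0$ and $\epsilon(t)\neq 0$ otherwise; this is a legitimate periodic isotopy with constant basis $\ul{b}$, yet at $t=\tfrac12$ the net acquires the body-centring translation and its maximal lattice strictly contains $\bZ^3$, while at all nearby times it equals $\bZ^3$. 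Hence no continuous family of primitive bases exists, the "constant relative index" $M$ does not exist, and your subsequent appeal to Lemma \ref{l:periodicstructures} (which needs both junction bases to generate the \emph{same} lattice) is unsupported: in general $\langle\ul{c}\rangle$ and $\langle\ul{c}'\rangle$ are merely commensurable, e.g.\ $2\bZ\times\bZ\times\bZ$ versus $\bZ\times 2\bZ\times\bZ$, and neither is an integer unimodular transform of the other. The repair is to go in the opposite direction, which is exactly what the paper does: pass to a common \emph{sublattice} rather than the common superlattice. Since $\langle\ul{c}\rangle$ and $\langle\ul{c}'\rangle$ are both finite-index sublattices of $L_{\max}(\N_1)$, their intersection is a full-rank lattice, so one can choose integer matrices $M,M'$ of positive determinant with $\ul{c}M=\ul{c}'M'$; twisting $P$ by $M$ and $Q$ by $M'$ (a coarsening of the periodic structure, always legitimate) makes the terminal and initial bases agree, and then your concatenation goes through verbatim. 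The paper implements this via its amplification notation $k\cdot\ul{e}$, but the substance is the common-sublattice step that your argument is missing.
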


\begin{proof} 
Let $\{(\ul{a_t}),(f_t):0\leq t\leq 1\}$ be an isotopy equivalence for $\N_0, \N_1$ as above and let $\{(\ul{b_t}),(g_t):1\leq t\leq 2\}$ be an isotopy equivalence between $\N_1$ and $\N_2$. 
Suppressing the implementing maps $f_t$ and $g_t$ we may denote this information as
\[
\N_0 \rightsquigarrow_{(a^t)} \N_1,\quad \N_1 \rightsquigarrow_{(b^t)} \N_2.
\]
We now have 2 periodic structures $\ul{a}^1$ and $\ul{b}^0$ on $\N_1$.  If they were the same then a periodic isotopy between $\N_0$ and $\N_2$ could be completed by the simple concatenation of these paths. However, in general we must choose new periodic structures to achieve this.

For a periodic structure basis $\ul{e}= \{e_1,e_2 , e_3\}$ and $k= (k_1, k_2, k_3) \in \bZ^3$ let us write $k\cdot \ul{e}$ for $\{k_1e_1,k_2e_2 , k_3e_3\}$.
We have $\ul{a}^1= j\cdot \ul{e} $ for some primitive periodic structure basis $\ul{e}$ of $\N_1$. Similarly 
$\ul{b}^0= j'\cdot \ul{e'} $ for some primitive periodic structure basis $\ul{e'}$. Since primitive right-handed periodicity bases on the same linear periodic net are equivalent by a linear map $X \in GL^+(3,\bZ)$, it follows that the vectors of $\ul{e'}$ are integral linear combinations of the vectors of $\ul{e}$.
Thus the vectors of $\ul{b}^0$  are integral linear combinations of the vectors of $\ul{e}$. It follows that we can now find elements $k, k' \in \bZ^3$ so that 
the vectors of $k \cdot \ul{a}^1$ are integral linear combinations of the vectors of  $k' \cdot \ul{b}^0$.

Consider now the induced isotopy equivalences
\[
\N_0 \rightsquigarrow_{(k\cdot \ul{a}^t)} \N_1,\quad \N_1 \rightsquigarrow_{(k'\cdot \ul{b}^t)} \N_2.
\]
These isotopies are identical to the previous isotopy equivalences at the level of the paths of individual nodes, but the framing periodic structure bases have been replaced.
These periodic isotopies do not yet match, so to speak, but we note that the second isotopy equivalence implies an isotopy equivalence from $(\N_1, \ul{d})$ to some $(\N_2, \ul{d'})$ whenever the periodic structure basis $\ul{d}$ has vectors
which are  integer combinations of the vectors of $(k'\cdot \ul{b}^0)$. Thus we can do this in the case
$\ul{d}= k\cdot \ul{a}^1$ to obtain matching isotopy paths, in the sense that the terminal and initial periodic structure bases on $\N_1$ agree. Composing these paths we obtain the desired isotopy equivalence between $\N_0$ and $\N_2$.
\end{proof}

\subsection{Isotopy equivalence for linear graph knots}\label{ss:deformation_equiv_knots}

In the next definition we formally define two linear graph knots on the flat torus to be \emph{linearly isotopic} if there is a continuous path of linear graph knots between them. It follows that if the linear graph knots $\lgk(\N_1,\ul{a})$ and $\lgk(\N_2,\ul{b})$ are linearly isotopic then, by simple periodic extension, the nets $\N_1$ and $\N_2$ are periodically  isotopic. Also we see in Proposition \ref{p:deformequivalence} a form of converse, namely that if
$\N_1$ and $\N_2$ are periodically  isotopic then they have graph knots, associated with \emph{some} choice of periodic structures, which are linearly isotopic.

On the other hand note that a linear 3-periodic net $\N$ in $\bR^3$ with the standard periodicity basis $\{b\}$  is periodically isotopic to its image $\N'$ under an isometric map which cyclically permutes the coordinate axes. 
This is because there is a continuous path of rotation maps of $\bR^3$ from the identity map to the cyclic rotation and restricting these maps to $|\N|$ provides maps $(f_t)$ for a periodic isotopy. 
 While the associated graph knots $K= \lgk(\N,\ul{b})$ and $K'=\lgk(\N',\ul{b'})$, considered as knots in the \emph{same} 3-torus, are  \emph{homeomorphic} (under a cyclic automorphism of the 3-torus which maps one graph knot to the other) they need \emph{not} be linearly isotopic. This follows since linear isotopy within a fixed 3-torus must preserve the homology classes of cycles and yet $K$ may contain a directed cycle of edges with a homology class in $H_1(\bT^3;\bZ)=\bZ^3$ which do not appear as a homology class of any cycle of edges in $K'$. 
%?? An explicit example is given by the net $\N$ where the LQG for $\{b\}$ is given by the disjoint union of 3 single loop graphs, carrying the 3 labels $(1,0,0),(1,0,0),(0,1,0)$.

In view of this, in the next formal definition we also give weaker forms of linear isotopy equivalence which can be considered as linear isotopy up to rotations and linear isotopy up to {affine automorphisms}.

Let $X\in GL^+(3;\bZ)$. Then there is an induced homeomorphism of the flat 3-torus which we denote as $X_\pi$. This is \emph{affine} in the sense that flat torus line segments map to flat torus line segments.

\begin{defn}\label{d:knotisotopies}
%({\color{red}NEW: (ii) is added})
Let $K_0=(N_0, S_0)$ and  $K_1= (N_1, S_1)$ be linear graph knots on the flat torus $\bT^3= [0,1)^3$. 

(i) $K_0$ and $ K_1$ are \emph{linearly isotopic}  if there are linear graph knots $K_t= (N_t, S_t)$,%on the flat 3-torus
 for $0<t<1$, and bijective continuous functions $f_t:|K_0|\to |K_t|$ such that, {$f_0$ is the identity map on $K_0$}, $f_t(N_0)=N_t, f_t(S_0)=S_t$, and the paths $t \to f_t(p),$ for  $p\in K_0$ and $0\leq t \leq 1$, are continuous. 
 
(ii) $K_0$ and $ K_1$ are \emph{rotationally linearly isotopic} if for some rotation automorphism $X_\pi$, with $X$ a rotation in $ GL^+(3;\bZ)$, the graph knots $K_1$ and $X_\pi K_2$ are linearly isotopic.

(ii) $K_0$ and $ K_1$ are \emph{globally linearly isotopic} if for some affine automorphism $X_\pi$, with $X\in GL^+(3;\bZ)$, the graph knots $K_1$ and $X_\pi K_2$ are linearly isotopic.
\end{defn}

%REMOVE ???? Except when there is cause for confusion we shall say simply that $K_0$ and $ K_1$ are \emph{isotopic}(resp. \emph{globally isotopic}) to mean that they are linearly isotopic (resp. globally linearly isotopic). Indeed linear isotopy is the natural notion of isotopy for  the category of linear graph knots.

%\newpage
\subsection{Enumerating linear graph knots and embedded nets}
We can indicate  a linear graph knot $K$ on the flat 3-torus by the triple $(Q,h,p)$, where $(Q,h)$ is a labelled directed quotient graph and $p=(x_1, \dots , x_n)$ denotes the positions of its $n$ vertices in the flat 3-torus $\bT^3= [0,1)^3$. We may also define general \emph{placements} of $K$, or of $(Q, h)$, as triples $(Q,h,p')$ associated with points $p'$ in the $n$-fold direct product $(\bT^3)^n$. Such placements  either correspond to proper linear graph knots with the same labelled quotient graph, or  are what we shall call \emph{singular placements}, for which the nodes $x_i'$ of $p'$ may coincide, or where some pairs of line segment bonds determined by $(Q,h)$ and $p'$ are not essentially disjoint. 

The general placements of $K$ are thus parametrised by the points $x'$ of the flat manifold
$\bT^{3n}= [0,1)^{3n}$, and this manifold is the disjoint union of the set $\K(Q,h)$ of proper placements and the set $\S(Q,h)$ of singular placements.

\begin{thm}\label{t:finiteness}
There are finitely many linear isotopy classes of linear graph knots in the flat torus $\bT^3$ with a given labelled quotient graph.
\end{thm}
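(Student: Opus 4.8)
The plan is to identify the linear isotopy classes with the path-components of the set $\K(Q,h)$ of proper placements, and then to bound the number of these components by exhibiting $\K(Q,h)$ as a semialgebraic set. First I would make the reduction precise. Having fixed the labelled directed quotient graph $(Q,h)$ with $n$ vertices, every placement $p=(x_1,\dots ,x_n)\in \bT^{3n}$ determines the edges of the associated graph knot as the flat-torus line segments $\pi([x_i,x_j+k])$ prescribed by the labels $k$, so a continuous path in $\bT^{3n}$ is exactly a continuous family of placements of $(Q,h)$. A linear isotopy between two proper placements therefore restricts to a continuous path in $\K(Q,h)$, and conversely any path in $\K(Q,h)$ yields a linear isotopy by moving the nodes along the path and interpolating each edge affinely. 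The only point to check in the forward direction is that the labelled quotient graph cannot change along a linear isotopy: each edge label is the integer displacement recorded by a continuously chosen lift of that edge to the universal cover $\bR^3$, and being $\bZ^3$-valued it is locally constant, hence constant, along the continuous deformation. Thus $K_0$ and $K_1$ are linearly isotopic if and only if the corresponding points of $\K(Q,h)$ lie in the same path-component, and it suffices to bound the number of path-components of $\K(Q,h)$.

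Next I would describe $\S(Q,h)=\bT^{3n}\setminus\K(Q,h)$ as a semialgebraic set. A placement is singular precisely when one of finitely many combinatorial conditions fails: two nodes coincide, or two of the prescribed edges fail to be essentially disjoint. The coincidence conditions $x_i=x_j$ are affine, hence semialgebraic. For the edge conditions I would pass to the covering net, where the edge $(v_iv_j,k)$ is the $\bZ^3$-orbit of the segment $[x_i,x_j+k]$: two edges fail to be essentially disjoint exactly when some integer translate $[x_a+m,x_b+l+m]$ of the second meets $[x_i,x_j+k]$ at a point which is not a shared vertex of the covering net. For fixed $m$ the existence of parameters $s,t\in[0,1]$ with $x_i+s(x_j+k-x_i)=x_a+m+t(x_b+l-x_a)$, together with the clause excluding legitimate shared endpoints, is a first-order formula over the reals in the coordinates of the $x_i$, and so by Tarski--Seidenberg defines a semialgebraic set.

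The remaining issue is that there are a priori infinitely many translates $m$ to consider. Here I would use a locality bound: since all $x_i$ lie in $[0,1)^3$ and all labels have modulus at most the depth $\nu$ of $(Q,h)$, each segment $[x_i,x_j+k]$ lies in a fixed bounded box, and two such segments can meet only when the corresponding boxes overlap, which confines $m$ to a finite subset of $\bZ^3$. Hence $\S(Q,h)$ is a finite union of semialgebraic sets and is therefore semialgebraic; realising $\bT^{3n}$ as the real algebraic variety $(S^1)^{3n}\subset\bR^{6n}$, or equivalently working with the $\bZ^{3n}$-periodic lift to $\bR^{3n}$, makes this globally rigorous. Consequently $\K(Q,h)$ is an open semialgebraic set.

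Finally I would invoke the structure theory of semialgebraic sets: a semialgebraic set has finitely many connected components, each of which is semialgebraically path-connected. For the open set $\K(Q,h)$ these components coincide with its path-components, which by the first paragraph are exactly the linear isotopy classes, and the finiteness follows. I expect the main obstacle to be the bookkeeping of the second and third steps: reducing the essential-disjointness requirement over all torus lifts to finitely many segment-intersection tests, and phrasing ``meet only at a shared vertex'' as a genuine semialgebraic condition so that the finiteness theorem applies cleanly. The conceptual core, by contrast, is simply the recognition that properness of a placement is a semialgebraic constraint.
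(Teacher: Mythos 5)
Your proposal is correct and follows essentially the same route as the paper: both identify the linear isotopy classes with the connected components of the set $\K(Q,h)$ of proper placements inside $\bT^{3n}$, exhibit its complement $\S(Q,h)$ as a (closed) semialgebraic set, and invoke the structure theory of real semialgebraic/algebraic sets to conclude that the number of components is finite. Your write-up simply supplies the details the paper leaves implicit (constancy of the edge labels along an isotopy, the Tarski--Seidenberg reduction, and the locality bound confining the integer translates to a finite set), all of which are consistent with the paper's argument.
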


The following short but deep proof echoes a proof used by Randell \cite{ran} in connection with invariants for finite piecewise linear knots in $\bR^3$. However we remark that
an alternative more intuitive proof of this general finiteness theorem could be based on the fact that the isotopy classes of  the linear graph knots can be labelled by finitely many crossing diagrams (appropriate to the 3-torus). Also direct arguments are available to show such finiteness for labelled quotient graphs with 1 or 2 vertices.

\begin{proof}
The set $\S(Q,h)$ is a closed semialgebraic set, defined by a set of polynomials and inequalities. The open set set $\K(Q,h)$ is equal to $\bT^{3n}\backslash \S(Q,h)$. Since this set is the difference of two algebraic sets it follows from the structure of real algebraic varieties \cite{whi} that the number of  connected components of $\K(Q,h)$ is finite.
\end{proof}

%{\color{cyan}some rewording and smoothing from here and a corollary 6.6 added}

The theorem implies that the isotopy classes of linear graph knots are countable, since labelled quotient graphs are countable, and so in principal these classes may be listed
by various schemes. For example, for each $n$ there are finitely
many labelled quotient graphs of depth 1 which have $n$ vertices and so finitely many linear isotopy classes of linear graphs knots with $n$ vertices.

The corollary of the next elementary proposition gives a similar finiteness for the periodic isotopy classes of embedded periodic nets.

\begin{prop}\label{p:deformequivalence}
Let $\N$ and $\N'$ be linear 3-periodic nets in $\bR^3$.

 Then the following are equivalent.
\\
\indent (i) $\N$ and $\N'$ are periodically isotopy equivalent.

(ii) There are right-handed periodicity bases $\ul{a}$
and $\ul{a'}$ for $\N$ and $\N'$ such that the linear graph knots $\lgk(\N,\ul{a})$ and $\lgk(\N',\ul{a'})$ are linearly isotopic. 
\end{prop}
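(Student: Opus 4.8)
The plan is to prove the two implications separately, with the reverse implication following quickly from material already in hand and the forward implication carrying the main work. For (ii) $\Rightarrow$ (i), suppose $\lgk(\N,\ul{a})$ and $\lgk(\N',\ul{a'})$ are linearly isotopic, via linear graph knots $K_t$ and connecting maps $f_t$ as in Definition \ref{d:knotisotopies}(i). Each $K_t$ has a well-defined covering (periodic extension) net $\M_t$ in $\bR^3$ with the standard basis $\ul{b}$. I would lift the torus isotopy $(f_t)$ to the universal cover $\bR^3$, beginning from the identity lift of $f_0$; standard covering-space theory guarantees a $\bZ^3$-equivariant family $\widetilde f_t\colon |\M_0|\to|\M_t|$, and $(\ul{b},(\widetilde f_t))$ is then a strict periodic isotopy from $\M_0$ to $\M_1$. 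Since $\M_0$ is the covering net of $\lgk(\N,\ul{a})$, it is orientedly affinely equivalent to $\N$ by the remark following Definition \ref{d:lineargraphknot}, hence periodically isotopic to $\N$ by the fourth observation following Definition \ref{d:deformationequivalentnets}; likewise $\M_1$ is periodically isotopic to $\N'$. Transitivity (Theorem \ref{t:equivalencerelation}) then gives (i). This is exactly the periodic-extension remark preceding the proposition, made precise.

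For (i) $\Rightarrow$ (ii), let $(\ul{a}^t,(f_t))$, $0\le t\le 1$, be a periodic isotopy from $\N=\N_0$ to $\N'=\N_1$, and set $\ul{a}=\ul{a}^0$, $\ul{a'}=\ul{a}^1$. The idea is simply to follow the graph knots $K_t:=\lgk(\N_t,\ul{a}^t)$ through the isotopy and verify that $t\mapsto K_t$ is itself a linear isotopy. Concretely, let $A_t\in GL^+(3,\bR)$ be the linear map carrying the ordered basis $\ul{a}^t$ to $\ul{b}$; it depends continuously on $t$ and stays in $GL^+(3,\bR)$ since both bases are right-handed. Writing $\pi\colon\bR^3\to[0,1)^3$ for the associated quotient map and tracking a motif $(F_v,F_e)$ of $\N_0$ through the isotopy, the nodes and edges of $K_t$ are the images under $\pi\circ A_t$ of $f_t(F_v)$ and $f_t(F_e)$, which vary continuously because $A_t$ and each $f_t(\cdot)$ do. I would define the connecting maps by $\phi_t(\pi(A_0 x))=\pi(A_t f_t(x))$, and then check directly that $\phi_0=\mathrm{id}$, that $\phi_t$ is a bijection sending nodes to nodes and edge segments to edge segments, and that $t\mapsto\phi_t(q)$ is continuous. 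Since $K_0=\lgk(\N,\ul{a})$ and $K_1=\lgk(\N',\ul{a'})$, this exhibits the required linear isotopy.

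The step that needs care — and the main obstacle — is verifying that each $K_t$ is a genuine (proper) linear graph knot and that the family is continuous \emph{on the flat torus}, not merely in $\bR^3$. Properness is inherited from the net: since $\N_t$ is noncrossing and $A_t$ is an oriented linear isomorphism, the normalised net $A_t(\N_t)$ is noncrossing, so its image on the torus has essentially disjoint flat-torus line segments, as two crossing torus segments would lift to crossing edges of $A_t(\N_t)$. For the descent to the torus and the well-definedness of $\phi_t$ one uses that $f_t$ intertwines the period lattices generated by $\ul{a}^0$ and $\ul{a}^t$ under the identification $a_i^0\leftrightarrow a_i^t$ (the periodic equivariance implicit in the notion of periodic isotopy, cf. the second observation following Definition \ref{d:deformationequivalentnets}), so that $A_t f_t$ sends $\ul{a}^0$-lattice translates to $\bZ^3$-translates. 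Working throughout with the finite motif $(F_v,F_e)$ rather than the whole of $|\N_0|$ sidesteps any global subtlety and keeps every continuity claim finite-dimensional; the continuity of $t\mapsto\phi_t(q)$ on the torus is exactly what accommodates the ``reentry'' jumps of edges across faces of $[0,1)^3$, which are discontinuities only in the $[0,1)^3$ picture and not on the torus itself. Finally, the underlying quotient graph $H$ is constant along the path because each $f_t$ is a graph bijection, so every $K_t$ is a placement of the same $H$, as Definition \ref{d:knotisotopies}(i) requires.
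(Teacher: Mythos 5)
Your proposal is correct and follows essentially the same route as the paper: for (i)$\Rightarrow$(ii) the paper likewise takes the induced path $t\mapsto\lgk(\N_t,\ul{a}^t)$ as the linear isotopy, and for (ii)$\Rightarrow$(i) it likewise invokes periodic extension (your covering-space lift is just this made explicit). Your version merely fills in details the paper leaves implicit — the normalising maps $A_t$, the descent to the torus, and the lattice-equivariance of the $f_t$ — which the paper compresses into the remark that ``the functions $f_t$ necessarily respect the periodic structure.''
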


\begin{proof}
Suppose that  (i) holds. Let $\N_0=\N$ and $\N_1=\N'$ and assume the equivalence is implemented, as in the definition of periodic isotopy, by a path of intermediate nets $\N_t$ together with
(a)  a continuous path of bases $t \to \ul{a}^t$, $0\leq t\leq1$, where  $\ul{a}^t$ is a periodicity basis for $\N_t$, and (b) bijective functions
$f_t$ from the set of nodes of $\N_0$ to the set of nodes of $\N_t$. The functions $f_t$ necessarily respect the periodic structure.
Let $\ul{a}= \ul{a}^0$ and 
$\ul{a'}= \ul{a}^1$. It follows that the resulting path
$t \to \lgk(\N_t,\ul{a}^t)$  an isotopy between
$\lgk(\N,\ul{a})$ and $\lgk(\N',\ul{a'})$.

Suppose that (ii) holds, with
$K=\lgk(\N,\ul{a})$ and $K'=\lgk(\N',\ul{a'})$. A linear isotopy equivalence $(f_t)$ between $K$ and $K'$ extends uniquely, by periodic extension, to a periodic isotopy equivalence between $\N$ and $\N'$.
\end{proof}

\begin{cor}\label{c:finitelymany} Let $(H,\lambda)$ be a labelled quotient graph. Then there are finitely many periodic isotopy classes of linear 3-periodic nets $\N$ which have the labelled quotient graph $(H,\lambda)$ with respect to some periodicity basis.
\end{cor}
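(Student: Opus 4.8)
The plan is to deduce the corollary from the finiteness of linear isotopy classes of graph knots (Theorem \ref{t:finiteness}) together with the translation between periodic isotopy of nets and linear isotopy of their associated graph knots (Proposition \ref{p:deformequivalence}).

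First I would fix $(H,\lambda)$ and let $\mathcal{C}$ denote the family of all linear $3$-periodic nets $\N$ admitting some periodicity basis $\ul{a}$ with LQG$(\N;\ul{a})=(H,\lambda)$. It is enough to treat nets for which such a basis can be taken right-handed: a left-handed realisation becomes, after transposing two basis vectors, a right-handed realisation of a quotient graph $(H,\lambda')$ obtained from $(H,\lambda)$ by a coordinate permutation, and there are only finitely many such $(H,\lambda')$, so the general case follows from the right-handed case applied to each. For each net $\N$ realising $(H,\lambda)$ by a right-handed basis I would fix one such basis $\ul{a}(\N)$ and form the linear graph knot $K(\N):=\lgk(\N,\ul{a}(\N))$. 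By Definition \ref{d:lineargraphknot} and the identification of edge labels with homology classes in $H_1(\bT^3;\bZ)\cong\bZ^3$ recorded in Section \ref{s:quotient graphs}, the knot $K(\N)$ is a linear graph knot whose labelled quotient graph is exactly $(H,\lambda)$.

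Next I would apply Theorem \ref{t:finiteness}, which asserts that there are only finitely many linear isotopy classes of linear graph knots on $\bT^3$ with labelled quotient graph $(H,\lambda)$. Consequently, as $\N$ ranges over the right-handed part of $\mathcal{C}$, the associated knots $K(\N)$ meet only finitely many linear isotopy classes. Finally I would invoke the implication (ii)$\Rightarrow$(i) of Proposition \ref{p:deformequivalence}: if $K(\N_1)=\lgk(\N_1,\ul{a}(\N_1))$ and $K(\N_2)=\lgk(\N_2,\ul{a}(\N_2))$ are linearly isotopic, then $\N_1$ and $\N_2$ are periodically isotopic, since these knots realise condition (ii) with the chosen right-handed bases. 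Hence nets whose graph knots lie in a common linear isotopy class are periodically isotopic, so the number of periodic isotopy classes represented in $\mathcal{C}$ is at most the finite number of linear isotopy classes of graph knots with labelled quotient graph $(H,\lambda)$, which finishes the argument.

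The one point requiring care is the verification in the first step that $\lgk(\N,\ul{a})$ genuinely carries $(H,\lambda)$ as its labelled quotient graph, so that Theorem \ref{t:finiteness} applies on the nose; this is immediate from the construction of $\lgk$ and the homological interpretation of the labels, but it is the hinge on which the whole reduction turns. Everything else is bookkeeping, since the two cited results already package the analytic content (the semialgebraic component count) and the equivalence of the two notions of isotopy.
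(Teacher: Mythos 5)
Your argument is correct and is exactly the route the paper intends: the corollary is deduced by sending each net to the linear graph knot of a chosen periodicity basis realising $(H,\lambda)$, invoking Theorem \ref{t:finiteness} for the finiteness of linear isotopy classes, and then using the implication (ii)$\Rightarrow$(i) of Proposition \ref{p:deformequivalence} to conclude that nets with linearly isotopic knots are periodically isotopic. Your extra care over left-handed bases (reducing to finitely many coordinate-permuted labellings $(H,\lambda')$) is a sensible piece of bookkeeping that the paper leaves implicit.
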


In future work it will be of interest to focus on individual topologies and to determine the finitely many periodic isotopy classes of depth 1. Of particular interest are those with some sense of maximal symmetry over their periodic isotopy class. In fact we formalise this idea in  Section \ref{ss:maxsymmetryMulticomponent} in connection with homogeneous multicomponent nets. 

We now note two basic examples of connected  self-entangled nets,  which we regard as \emph{periodic isotopes} of their maximal symmetry embedded nets.

\begin{eg}\label{eg:self_entgld_dia} \emph{Self-entangled diamond.}
%A self-entangled diamond net, in model net form is implicit in Figure \ref{f:dia_graphs2} and is the periodic extension of the 2-vertex graph knot in this diagram.
The multi-node fragment in Figure \ref{f:selfentdiaBig} shows part of an embedded net, say $\N$, whose topology is {\bf dia}. 
%(One can also view the associated periodic model net, $\M_{\rm dia}^\alpha$ say, as an unstable equilibrium placement for a periodic tensegrity in which the edges equivalent to $[p_1, p_2]$ are incompressible bars.)  
%and we write  $\M_{\rm dia}^\alpha$ to indicate an associated model net.
That $\N$ and the usual maximum symmetry net $\N_{\rm dia}$ are not periodically isotopic  follows from an examination of the catenation of cycles.
 Specifically the diagram shows that $\N$  has 2 disjoint 6-cycles of edges which are linked. This property does not hold for $\N_{\rm dia}$ and so they cannot be periodically isotopic. 
\end{eg}

\begin{center}
\begin{figure}[ht]
\centering
\includegraphics[width=5cm]{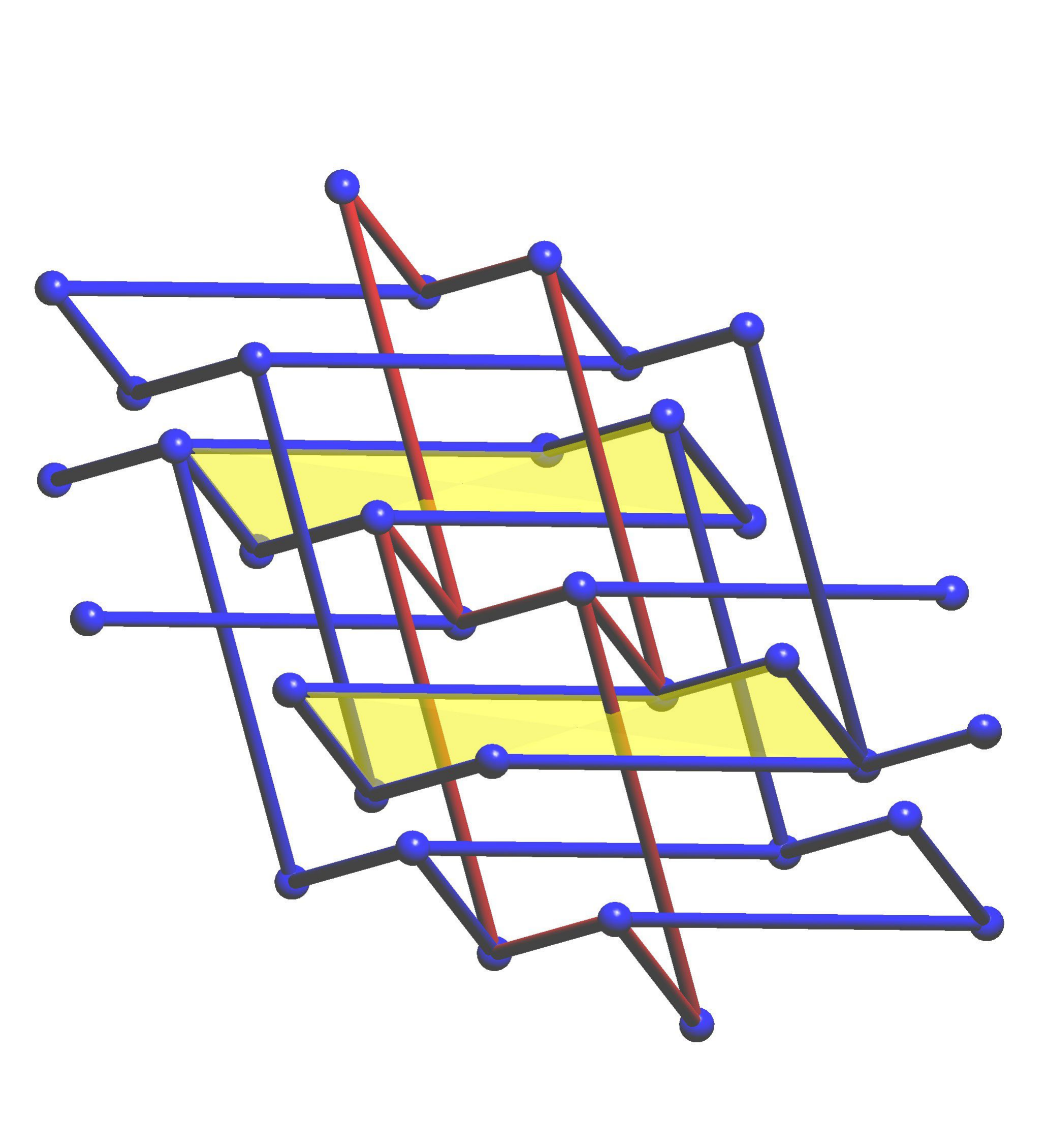}
\caption{Catenated 6-cycles in a self-entangled embedding of {\bf dia}.}
\label{f:selfentdiaBig}
\end{figure}
\end{center}

\begin{eg}\label{eg:self_entgld_cds}
{\emph{Self-entangled embeddings of} {\bf cds}.}
The maximal symmetry net $\N_{\rm cds}$ (associated with cadmium sulphate) has an underlying periodic net ${\bf cds}$  with quotient graph $H(1,2,1)$. 
The left hand diagram of Figure \ref{f:cds} indicates a linear graph knot for {\bf cds} and  the 3-periodic extension of this diagram defines a model embedded net which is periodically isotopic to $\N_{\rm cds}$.
To be precise,  define this net as the model net $\M(p_1, p_2)$ with
$p_1=(1/2,1/2,1/2), p_2=(1/2,1/4,1/2)$ and with labelled quotient graph  LQG$(\N,\ul{b}) = \H=(H(1,2,1),\lambda)$ where
$\lambda$ assigns the labels, $(0,0,1)$ to the loop edge associated with $p_1$,
$(1,0,0)$ to loop for $p_2$ and the labels $(0,0,0)$ and $(0,1,0)$ to the 2 remaining edges.
\begin{center}
\begin{figure}[ht]
\centering
\includegraphics[width=4cm]{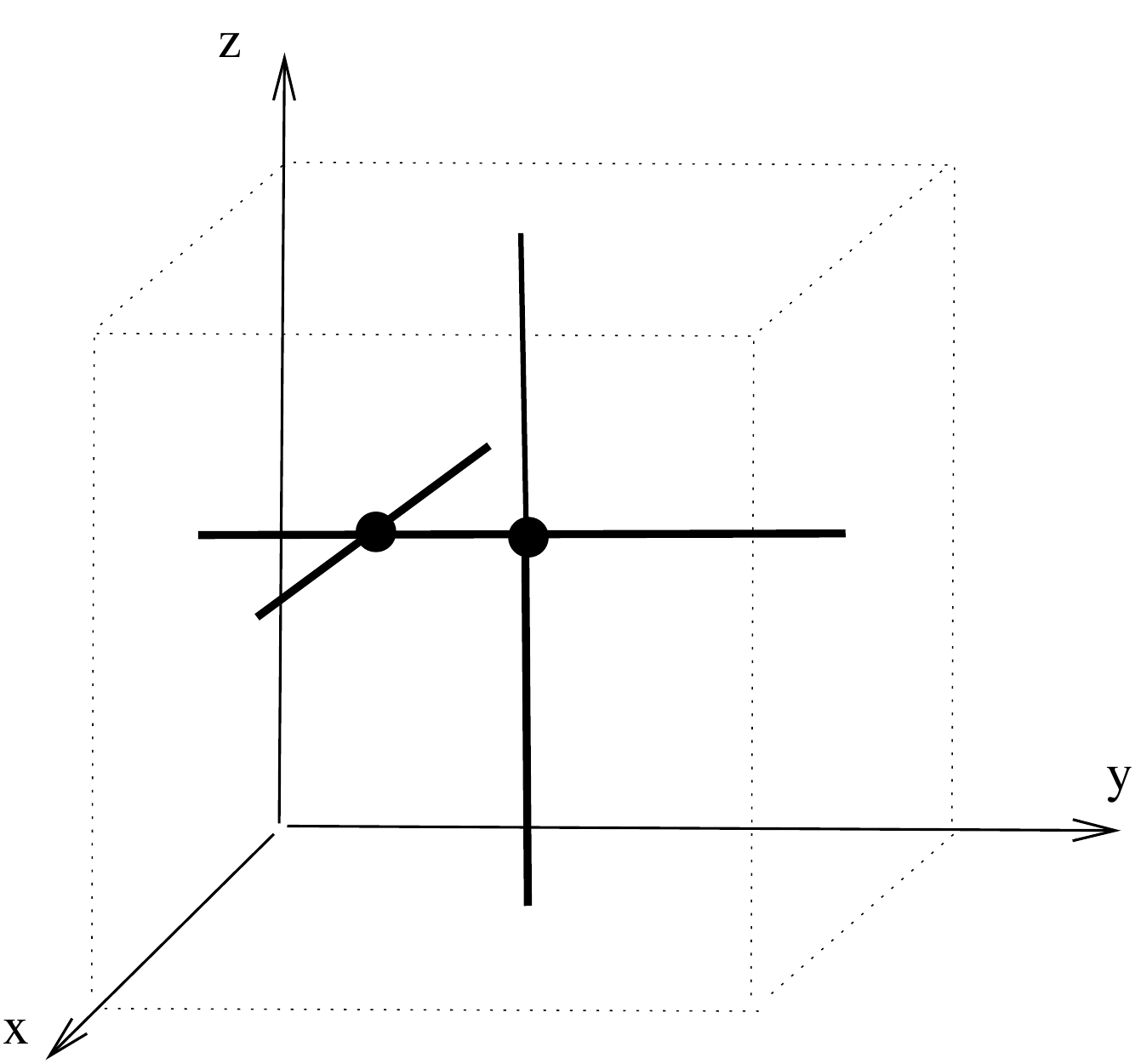}
\quad \quad 
\includegraphics[width=4cm]{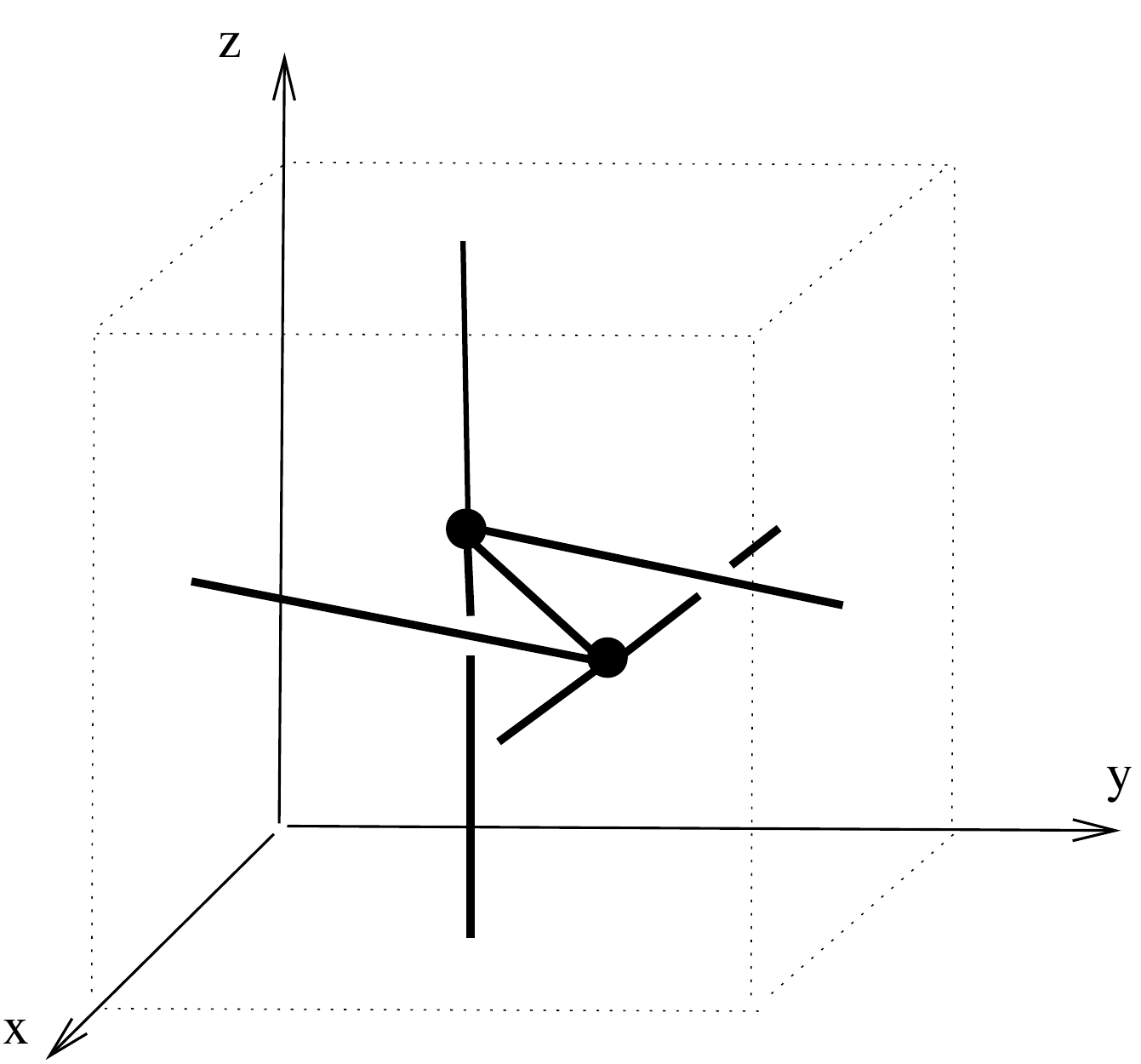}
\quad 
\caption{Linear graph knots for distinct periodic isotopes of {\bf cds}} 
%and the graph knot $K'$ for an affine image of $\N_{\rm pcu} $.}
\label{f:cds}
\end{figure}
\end{center}

In the manner of Example \ref{s:mainexample}  let us now view  $p_2$ as variable point $p_2'=(x,y,z)$ within the semiopen cube $[0,1)^3$. The positions of $p_2'$ together with the labelled quotient graph, define model nets 
%in $\fN_2$ 
as long as there are no edge crossings.
%does not have $y$-coordinate equal to $1/2$. 
Let $\O$ be the set of these positions for $p_2'$. Then, viewed as a subset of $[0,1)^3$ (\emph{not} as a subset of the flat 3-torus) the set $\O$ decomposes as the union of 5 path-connected components $\O_1, \dots , \O_5$. The set $\O_1$ is the subset of $\O$ with $y<1/2$, the set $\O_2$ is the subset with $y>1/2, x>1/2, z<1/2$ (the right hand figure of Figure \ref{f:cds} corresponds to a point in $\O_2$), and  $\O_3$ is the subset with 
$y>1/2, x<1/2, z<1/2$. The sets $\O_4, \O_5$ are similarly defined to $\O_2, \O_3$, respectively, except that $z$ is greater than $1/2$.  

Let $\M_1,\dots , \M_5$ be representatives for the 5 path-connected components. The net $\M_1=\M(p_1, p_2)$ is a model net for $\N_{\rm cds}$ while the net $\M_2$ is a periodic isotope.
This can be seen once again by exhibiting different catenation properties. Specifically,
$\M_{\rm cds}^\alpha$ has a 6-cycle of edges which is linked to (penetrated by) an infinite linear subnet, while
$\M_{\rm cds}$ does not have such catenation.

%============
%OLD
%Let $\M_1,\dots , \M_5$ be representatives for the 5 path-%connected components, where $\M_1=\M(p_1, p_2)$ and let $K_1, \dots , K_5$ be their respective linear graph knots. These linear graph knots are pairwise non-isotopic on the flat 3-torus. This can be shown to follow from the fact that in any linear isotopy, from $K_i$ to $K_j$, defined by a path $t \to (p_1(t), p_2(t))$, the 2 loop edges preserve their axial direction and so in the absence of collisions the set of inequalities at $t=0$ for $K_i$ (such as $p_1(0)_x>p_2(0)_x$ etc., for $i=2$)  is equal to the corresponding set of inequalities for $t=1$, and so $j= i$.

%Alternatively, the nets $\M_{\rm cds}= \M_1$ and $\M_{\rm cds}^\alpha= \M_2$ cannot be periodically isotopic since they have different catenation properties. 
%which must be preserved by a periodic isotopy. Specifically, $\M_{\rm cds}^\alpha$ has a 6-cycle of edges which is linked to (penetrated by) an infinite linear subnet, while $\M_{\rm cds}$ does not have such catenation.

%The nets $\M_{\rm cds}^\alpha = \M_2$ and  $\M_{\rm cds}^\beta = \M_3$ are, up to strict periodic isotopy, mirror images. In fact they are not periodically isotopic and so form an enantiomorphic pair of periodic isotopes. {\color{blue}I think this is so - proof to do.}
\end{eg}

%=============
%We indicate how locally linear ambient isotopy can result in infinite untwisting. Let $h(t,x)$, for $x=(x_1,x_2,x_3)$ in $\bR^3$ and $t\in [0,1/2]$, be the isotopy of $\bR^3$ which (a) fixes the half space $x_2\leq 0$, (b) rotates the planes $x_2=s$, for $0\leq s \leq 1$, through the angle $2\pi s$, and (c) rotates the half space $x_2\geq 1$, though $2\pi$. 

%For $1/2\leq t \leq 3/4$ we continue the definition of $h(t,x)$ by requiring that it fix points with $x_2\leq 1$ and gives rotation actions for $x_2 \geq 1$ echoing the action of the previous paragraph for $x_2\geq 0$ (but in double time). Continue in this manner and so define $h(t,x)$ for all $x$ and all $0\leq t <1$. Finally let $h(1,x)= x$ for all $x$. Then $h:[0,1]\times \bR^3\to \bR^3$ is continuous. This ambient homotopy is capable of infinite twisting and the 2-sided variant can effect the free periodic untwisting of nets which are periodic in
 
%{\color{magenta}Possibly mention $\bR^3 \cup \{0\}$ model for knots and ambient isometry automorphisms ...  (cf. Bonahon and Siebenmann, 2010, ch. 18) ... Conjecture 18.3 etc ...  or perhaps in next section} 

%{\color{magenta}THEORETICAL TEXT WITH A SYMMETRY GROUP FLAVOUR (at last!) IS NOW BROUGHT FORWARD. We first bring forward general "relative maximal symmetry" definitions and discussion (an expanded version of the first part of Igor's  text in the old section 7.8. \emph{After this} we bring forward the old section 7.5, which introduces the group supergroup perspective on component transitive nets} 
 
%============== 

\subsection{Entangled nets, knottedness and isotopies.}
\label{ss:knottednessand2formsisotopy} 
The examples above concern \emph{connected} self-entangled nets and their connected graph knots on the 3-torus and there is a natural intuitive sense in which such nets can be "increasingly knotted" by moving through homotopies to embeddings with an increasing number of edge crossings.  However the linear graph knot association is also a helpful perspective for multicomponent nets whose components are \emph{not} self-entangled so may be  equal to, or perhaps merely isotopic to, their individual maximal symmetry embeddings. In this case there are intriguing possibilities for the nesting of such "unknotted components" and their associated space groups. We address this topic in Sections \ref{s:groupmethods} and \ref{s:entanglednets} as well as the attendant crystallographic issue of formulating a notion of maximal symmetry for such multi-component nets. 
%{\color{cyan} We could put this text (slightly adjusted) at the start of Section 7.2 but it fits well here.}

%Also we remark that Euclidean spatial graphs in $\bR^3$ feature in the theory of  intrinsically linked connected graphs \cite{con-gor}, \cite{koh-suz}. 

For completeness we note two further forms of isotopy equivalence which will not be of concern to us.

\emph{(i) Relaxed periodic isotopy.}\label{r:per_def_equiv} The notion of periodic isotopy equivalence in Definition \ref{d:deformationequivalentnets} can be weakened in a number of ways.   One less strict form, which one could call \emph{relaxed periodic isotopy}, omits the condition (a), requiring periodic basis continuity, and so allows a general continuous path of intermediate (noncrossing) periodic nets $\N_t$. 
%In particular there is no constraint on the periodicity structure of $\N_t$ as $t$ tends to $1$.
Since the continuity requirement in (b) of the node path functions $(f_t)$ is one of point-wise continuity on the set of nodes $\N_0$ and, moreover, the ambient space is not compact, it follows that such  paths of periodic embedded nets can connect embedded nets that are not periodically isotopic. In particular, one can construct
% as we see in (ii), there are 
relaxed periodic isotopies which untwist infinitely twisted components.
%, as in the pair indicated in Figure \ref{f:twistystrip}.
%\begin{center}
%\begin{figure}[ht]
%\centering
%\includegraphics[width=8cm]{twistystripcompare.eps}
% \caption{1-periodic nets in $\bR^3$ which are freely periodically isotopic.}
% \label{f:twistystrip}
%\end{figure}
%\end{center} 
 
%\medskip

\emph{(ii) Ambient isotopy.} The usual definition of ambient isotopy for a  pair $K_1, K_2$ of knots (or links) in $\bR^3$ requires the existence of a path $h_t$ of homeomorphisms of $\bR^3$ (the ambient space) such that $h_0$ is the identity map and $h_1(K_1)=K_2$. Here, for $x\in \bR^3$, we have $h_t(x) = h(t,x)$ where $h:[0,1]\times \bR^3\to \bR^3$ is a continuous function. Also, the closed sets $K_t=h(t,K_0)$, for $0\leq t\leq1$, form a path of knots (or links) between $K_0$ and $K_1$.

One may similarly define \emph{ambient isotopy} for embedded periodic nets \cite{del-oke-nets}. In this case the intermediate closed sets $L_t$, defined by $L_t = h_t(|\N_0|)$ are the bodies of general string-node nets $\L_t$.
It is natural to impose the further condition that these sets are the bodies of (proper) linear $3$-periodic nets, and this then gives a definition of what might be termed \emph{locally periodic ambient isotopy}. In this case the set of restriction maps $f_t = h_t|_{|\N_0|}$ define a relaxed periodic isotopy.

\medskip

\section{Group methods and maximal symmetry isotopes}\label{s:groupmethods}

We now give some useful group theoretic perspectives for multicomponent frameworks, starting with the general group-supergroup construction in Baburin  \cite{bab} for transitive nets.
This method underlies various algorithms for construction and enumeration. In this direction we also define maximal symmetry periodic isotopes in terms of extremal group-supergroup indices of the components. Finally, turning towards generically, or randomly, nested components, we indicate the role of Burnside's lemma in counting \emph{all} periodic isotopes for classes of shift-homogeneous nets.

%{{\color{red}new section to avoid long section 6 and to concentrate group methods. Changed order: 1. group-supergroup method, max symmetry periodic isotopes. Burnside lemma counting (short section). } 

\subsection{Group-supergroup constructions 
%for component transitive nets
}\label{ss:group-supergroup}

{

Let $\N = \N_1 \cup \dots \cup \N_n$ be a linear 3-periodic net which is a disjoint union of connected linear 3-periodic nets in $\bR^3$. Let $G$  be the space group of $\N$ and  assume that it acts transitively on the $n$ components of $\N$. Thus $\N$ is a transitively homogeneous net, or is of transitive type.

Let $g_1 = id$, the identity element of $G$, and note that for each $i=2,\dots ,n$ there is an element $g_i\in G$ with $g_i\N_1=\N_i$. Also, let $H_i\subset G$ be the subgroup of elements $g$ with $g\cdot \N_i = \N_i$, for  $i=1,\dots , n$.

%Thus
%\[
%g_i^{-1}gg_i  
%\in H_1 \iff(g_i^{-1}gg_i)\N_1= \N_1 \iff g\N_i=\N_i, %ie. g \in H_i
%\]
%and so $H_i = g_iH_1g_i^{-1}$. 

\begin{lem}
The cosets of $H_1$ in $G$ are $g_1H_1, \dots , g_nH_1$.
\end{lem}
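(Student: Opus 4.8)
The plan is to recognise this statement as nothing more than the orbit--stabiliser correspondence for the action of the space group $G$ on the finite set of connected components of $\N$, together with the observation that the prescribed elements $g_1, \dots, g_n$ form a transversal for that action.

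First I would make the relevant action explicit. Every element of $G$ is an isometry of $\bR^3$ that carries $\N$ onto itself, and since isometries send connected components to connected components, each $g \in G$ permutes the finite set $\{\N_1, \dots, \N_n\}$ of components. This gives an action of $G$ on this set, which is transitive by hypothesis, and by definition $H_1 = \{g \in G : g\N_1 = \N_1\}$ is precisely the stabiliser of $\N_1$ under it.

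Next I would invoke the standard orbit--stabiliser bijection. Consider the map $\Phi : G/H_1 \to \{\N_1, \dots, \N_n\}$ given by $gH_1 \mapsto g\N_1$. It is well-defined and injective: if $gH_1 = g'H_1$ then $g^{-1}g' \in H_1$, whence $g'\N_1 = g\N_1$, and conversely $g\N_1 = g'\N_1$ forces $g^{-1}g' \in H_1$. Transitivity of the action makes $\Phi$ surjective. Hence $G$ has exactly $n$ left cosets of $H_1$, and these correspond bijectively to the $n$ components.

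Finally I would verify that the given representatives are a complete list. By the defining property $g_i\N_1 = \N_i$ (with $g_1 = \mathrm{id}$ giving $\N_1$), the bijection $\Phi$ sends $g_iH_1 \mapsto \N_i$. Since the components $\N_1, \dots, \N_n$ are pairwise distinct and $\Phi$ is injective, the cosets $g_1H_1, \dots, g_nH_1$ are pairwise distinct; as there are exactly $n$ cosets in all, these exhaust $G/H_1$. I expect no genuine obstacle here: the only step meriting care is the well-definedness of the $G$-action on components, namely that space-group isometries really do permute the \emph{finite} component set (which rests on $G$ preserving $\N$ and $\N$ having finitely many components), after which the result is the orbit--stabiliser theorem applied verbatim.
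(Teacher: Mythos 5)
Your proof is correct and is essentially the paper's own argument: the paper verifies directly that the cosets $g_iH_1$ are distinct (since their elements send $\N_1$ to distinct components) and that any $g\in G$ lies in some $g_jH_1$ by transitivity, which is exactly your orbit--stabiliser bijection $gH_1\mapsto g\N_1$ spelled out without naming it. No gaps.
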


\begin{proof}
The cosets $g_iH_1$ are distinct, since their elements map $\N_1$ to the distinct subnets $\N_i$. On the other hand if $g\in G$ then $g\N_1=\N_j$ for some $j$ and so $g_j^{-1}g\N_1=\N_1,  g_j^{-1}g \in H_1, g\in g_jH$ and $gH_1=g_jH_1.$
\end{proof}

Write $\stab(v; G)$ to indicate the subgroup of $G$ which fixes the node $v$ of $\N$ and similarly define the stabiliser group of an edge $e$ of $\N$.

\begin{lem}
%{\bf (Koch et al \cite{koc-et-al}.)} 
Let $v$ (resp. $e$) be a node (resp. edge) of $\N_i$ for some $i$. Then
\[
\stab(v; G)= \stab(v;H_i), \quad \quad \stab(e; G)= \stab(e;H_i).
\]
\end{lem}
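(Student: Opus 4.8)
The plan is to reduce both equalities to a single observation: any element of $G$ that fixes a node (or an edge) of $\N_i$ must preserve the component $\N_i$ setwise, and hence lie in $H_i$.

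First I would dispose of the easy inclusions. Since $H_i\subseteq G$, we have $\stab(v;H_i)=\stab(v;G)\cap H_i\subseteq\stab(v;G)$, and likewise $\stab(e;H_i)\subseteq\stab(e;G)$. So only the reverse inclusions $\stab(v;G)\subseteq H_i$ and $\stab(e;G)\subseteq H_i$ need an argument, and these amount to showing that fixing $v$ (or $e$) already forces membership in $H_i$.

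The key input is that each element $g$ of the space group $G$ is an isometry of $\bR^3$ carrying $|\N|$ onto itself, hence mapping connected sets to connected sets and permuting the connected components $\N_1,\dots,\N_n$ of $\N$; this is exactly the permutation action already invoked in the preceding coset lemma. Now take $g\in\stab(v;G)$, where $v$ is a node of $\N_i$. Then $g\cdot\N_i=\N_j$ for some $j$, and since $g$ fixes $v$ we have $v=g(v)\in g\cdot\N_i=\N_j$. As the components are pairwise disjoint and $v\in\N_i\cap\N_j$, we conclude $j=i$, so $g\cdot\N_i=\N_i$ and $g\in H_i$. Together with $g\in\stab(v;G)$ this gives $g\in\stab(v;H_i)$, establishing the first equality. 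The edge case is identical: if $g\in\stab(e;G)$ with $e$ an edge of $\N_i$, then $g(e)=e$ setwise, and since $e\subseteq\N_i$ we have $e=g(e)\subseteq g\cdot\N_i$; thus the two components $g\cdot\N_i$ and $\N_i$ both contain the nonempty set $e$, and disjointness forces $g\cdot\N_i=\N_i$, whence $g\in H_i$ and $g\in\stab(e;H_i)$.

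I expect no genuine obstacle here. The only point requiring care is the assertion that $G$ permutes the connected components, and this is immediate from the fact that space group elements are homeomorphisms of $\bR^3$ preserving $|\N|$ together with the connectedness of each $\N_i$; the pairwise disjointness of the $\N_i$ then does all the remaining work.
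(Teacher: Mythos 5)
Your proof is correct and follows essentially the same route as the paper: reduce both equalities to showing that fixing a node or edge of $\N_i$ forces $g\N_i=\N_i$, then use that $g$ preserves connectedness so it permutes the components, and that the fixed element lies in only one component. The paper phrases the last step via maximality of $\N_i$ as a connected subnet rather than via pairwise disjointness of the components, but this is the same observation.
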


\begin{proof}
It suffices to show that if $g$ fixes an element (vertex or edge) of $\N_i$ then $g\N_i=\N_i$. Observe that $\N_i$ is the maximal connected subnet of $\N$ containing the element. Also, for any subnet $\M$ the image $g\cdot \M$ is connected if and only if $\M$ is connected, and so the lemma follows.
\end{proof}

These lemmas feature in the proof of the following theorem \cite{bab}.

\begin{thm}
If $g\in G$ is a mirror element then $g \in H_1$.
\end{thm}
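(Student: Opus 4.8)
The plan is to exploit the one feature that distinguishes a mirror from other space-group elements, namely that a genuine reflection fixes an entire plane $P$ pointwise, and to show that this plane is forced to meet the body of $\N_1$. Once a fixed point of $g$ is exhibited inside $|\N_1|$, the conclusion $g\N_1=\N_1$ drops out from the disjointness of distinct components, by exactly the mechanism used in the proof of the stabiliser lemma above (a point of $\N_1$ fixed by $g$ cannot also lie in a different component). First I would record the two structural facts: since $g$ is a mirror rather than a glide it has a fixed plane $P=\{y:\langle y,\mathbf{n}\rangle=c\}$ for some unit normal $\mathbf{n}$ and constant $c$; and since $g$ lies in the space group $G$ of $\N$ it carries $|\N|$ to itself and hence permutes its connected components, so $g\N_1$ is again one of the $\N_i$, with either $g\N_1=\N_1$ or $|g\N_1|\cap|\N_1|=\emptyset$.

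The key step is to prove that $|\N_1|\cap P\neq\emptyset$, and here I would use that each component is \emph{connected} and of full periodic rank $3$. The translation lattice $L_1$ of $\N_1$ spans $\bR^3$, so some $v\in L_1$ satisfies $\langle v,\mathbf{n}\rangle\neq 0$; translating a fixed point $x_0\in|\N_1|$ by the integer multiples $kv$ then shows that the continuous height function $y\mapsto\langle y,\mathbf{n}\rangle$ takes arbitrarily large positive and negative values on $|\N_1|$. As $|\N_1|$ is connected, its image under this function is an interval, hence all of $\bR$, so in particular the value $c$ is attained: there is a point $x\in|\N_1|$ lying on the plane $P$.

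To finish, I would observe that $gx=x$ because $x\in P$, so $x\in|\N_1|\cap|g\N_1|$; since distinct components have disjoint bodies this forces $g\N_1=\N_1$, that is $g\in H_1$. (Equivalently, $x$ lies either on a node or in the interior of an edge of $\N_1$, and in the latter case properness of the net additionally pins the incident edge down as $g$-invariant, so the stabiliser lemma applies verbatim; but the bare disjointness argument already suffices.) The main obstacle is the geometric step showing the mirror plane meets $|\N_1|$: everything hinges on combining connectivity with full rank, and I expect the statement to genuinely require the ``mirror'' hypothesis, since the argument collapses for a glide plane (which has no fixed points) and the surjectivity of the projection onto $\mathbf{n}$ can fail for a subperiodic component.
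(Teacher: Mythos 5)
Your argument is correct. Note, however, that the paper does not actually supply a proof of this theorem: it states the result with a citation to \cite{bab} and merely remarks that the two preceding lemmas (on cosets of $H_1$ and on stabilisers of nodes and edges) ``feature in the proof.'' What you have written is a complete, self-contained argument along exactly the lines those lemmas suggest: the only genuinely new ingredient is your middle step, that the fixed plane $P$ of a true mirror must meet $|\N_1|$, which you obtain correctly from the full rank of the translation lattice of the \emph{connected} component (the height function $\langle\,\cdot\,,\mathbf{n}\rangle$ is unbounded above and below on the connected set $|\N_1|$, hence surjective onto $\bR$). The finishing step is sound, but it silently uses that distinct components have disjoint \emph{bodies}, not merely disjoint vertex and edge sets; this does follow from properness of $\N$ (essentially disjoint edges force edges, and hence bodies, of distinct components to be disjoint), and your parenthetical remark reducing to the stabiliser lemma via a fixed node or a $g$-invariant edge covers the same ground more carefully. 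Your closing observations --- that the argument collapses for a glide (no fixed points) and for a subperiodic component whose periodicity is parallel to $P$ (height function bounded) --- correctly identify why both hypotheses are needed, and are consistent with the paper's setup in which every $\N_i$ is a connected rank-$3$ net.
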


The significance of this result  is that it shows that the construction of a transitive type entangled net $\N$  with a connected component $\M$ requires the space group $\fS(\N)$ to be free of mirror symmetries which are not in $\fS(\M)$.
 In fact this necessary condition is frequently a \emph{sufficient} condition and this leads to effective constructions of novel entangled nets where these nets have components $g_i\M$ with multiplicity equal to the index of $\fS(\M)$ in $\fS(\N)$.

\subsection{Maximal symmetry periodic isotopes}
\label{ss:maxsymmetryMulticomponent}
 Let $\N$ be a multicomponent embedded 3-periodic net in $\bR^3$ with space group $\fS(\N)$ and let $\N_1, \N_2,  ... , \N_i, ... , \N_n$ be representatives of the equivalence classes of the components of $\N$ for the translation subgroup of $\fS(\N)$. Also, as in the previous section, let $H_i$ be the \emph {setwise} stabiliser of $\N_i$ in $\fS(\N)$. Regarding $H_i$  as a subgroup of $Aut(G(\N_i))$ (cf. Section \ref{ss:maxsymmetryRCSR}) we may compute the indices $m_i = |Aut(G(\N_i)) : H_i|$. Here we restrict our scope to crystallographic nets $\N_i$ (Klee, \cite{kle}) and therefore the indices are always finite. These indices evidently coincide when $\fS(\N)$ is transitive on the components of $\N$ and this is our primary focus.

We say that $\fS(\N)$ is a \emph{maximal symmetry space group for the periodic isotopy class of $\N$} if the nondecreasing rearrangement $m(\fS(\N))$ of $m_1, \dots  , m_n$, which we call the \emph{multi-index} of $\fS(\N)$, is minimal for the lexicographic order when taken over all groups $\fS(\N')$ where $\N'$ is periodically isotopic to $\N$. In this case we refer to $\N$ as a \emph{maximal symmetry periodic isotope} and we write  $\fS_{\rm max}(\N)$ for this space group, noting that $\fS_{\rm max}(-)$ is only defined for such minimal multi-index embedded nets. We note that a maximal symmetry proper embedding of a multicomponent net need not be unique, as might be already the case for (connected) single component nets (cf. Section \ref{ss:maxsymmetryRCSR}).

In the same way one may define  maximal symmetry groups for periodic homotopy and one may consider other equivalence relations 
%between $\N$ and $\N'$ can be weakened in other ways 
depending on the matter at hand but these issues shall not concern us here.

 We note that a maximal symmetry embedding for periodic isotopy is related to the concept of an \emph {ideal geometry} of a knot (Evans, Robins and  Hyde, \cite{eva-et-al-idealgeometry}, and references therein) that is required to minimize some energy function. However, as well as a certain arbitrariness in the choice of energy function and the possibility of overlooking a global minimum, the result of optimization depends on the imposed periodic boundary conditions. Thus the determination a maximal translational symmetry embeddings remains problematic in the search for an ideal geometry of a multicomponent periodic net. In contrast, our definition, being essentially group-theoretic, aims to capture isotopically intrinsic properties of embedded nets which are independent of such constraints.

%A maximal symmetry proper embedding of a multicomponent net need not be unique, as might be already the case for (connected) single component nets (cf. Section \ref{ss:maxsymmetryRCSR}).

Maximizing the  symmetry of interpenetrated {embedded nets} is important for a number of reasons, e.g. to characterize their transitivity properties and to {derive} possible distortions which might occur in a crystal structure {by examining} group-subgroup relations. Furthermore, the knowledge of a maximal symmetry can be used to explicitly construct a deformation path that relates an embedding  with maximal symmetry to a distorted embedding $\N'$ with higher multi-index. A periodic homotopy path can be constructed  relative to a common subgroup of { $\fS_{max}(\N)$ and $\fS(\N')$}, for example, by interpolating between coordinates, 
and this path is often crossing free and so a periodic isotopy. 

Determination of maximal symmetry is a highly non-trivial task. The only general approach to the problem was proposed in \cite{bab} that is based on subgroup relations between automorphism groups of connected components and a respective Hopf ring net.  Along these lines  maximal symmetry embeddings and their symmetry groups have been determined for $n$-grids as in Section \ref{ss:classesof_npcu}.  
%{\color{cyan} TO DO a Table or list there ?}.

\subsection{Counting periodic isotopy classes by counting orbits}
\label{ss:burnsidemethod}
Let us now consider translationally transitive embedded nets $\N$  with $n$ components which are randomly arranged, in the sense that we make no further assumptions. We are interested in calculating the number of periodic isotopy classes for a given topology. In the next section we solve this problem for $n$-fold {\bf pcu} by reducing the counting to a combinatorial calculation, namely to a calculation of the number of orbits of a finite set of "normalised" $n$-pcu nets under the action of a finite group of isometries, where the finite group is generated by cube rotations and shifts. 

The method is generally applicable but for a translationally transitive $n$-pcu embedding a normalisation of $\N$ takes a particularly natural form in which the components have integral coordinates. 
While a normalised net is not uniquely associated with $\N$ it turns out that their multiplicity corresponds to the cardinality of an orbit under the finite group action, and so counting the number of orbits gives the count we seek. A standard formula for counting such orbits is given by Burnside's Lemma which states the following. 
Let $G$ be a finite group acting on a finite set $X$ with group action $x \to g\cdot x$, so that the orbit of an element $z\in X$ is the set $\{g\cdot z:g \in G\}$.  Then the number of distinct orbits is given by
\[
\frac{1}{|G|}\sum_{g\in G} |X_g|,
\]
where $X_g$ denote the set of points $x$ with $g\cdot x=x$. In this way the problem is reduced to counting, for each symmetry element $g$, the number of normalised nets which have this symmetry.

\section{Classifying multicomponent entangled nets}\label{s:entanglednets}
We next determine the number of periodic isotopy types of various families of embedded nets $\N$ whose components are embeddings of the net {\bf pcu}. 
The simplest family here consists of those nets $\N$ with $n$ parallel components, each being a shifted copy of the model net $\M_{\rm pcu}$. 
%The isotopy types turn out to be in one-to-one correspondence with the orbits of a group action and so, for example, may be computed with the assistance of Burnside's lemma. 
In this case we refer to $\N$ as a \emph{multigrid} or  \emph{$n$-grid}. %(The underlying periodic graph is denoted as  {\bf pcu-cn} in the RCSR.) 
Such  nets have  dimension type $\{3;3\}$ and are shift-homogeneous. 

For practical purposes, both in this section and in Section \ref{s:latticenets}, we focus on the following hierarchy of 4 equivalence relations for embedded nets:
\medskip

1. Nets $\N_0, \N_1$ in $\fN$ are \emph{affinely equivalent} (resp. \emph{orientedly affine equivalent}) if they have translations $\N_0', \N_1'$
with
$\N_0'= X\N_1'$ for some invertible $3 \times  3$ matrix $X$
(resp. with $\det X>0$).
\medskip

2. The pairs $(\N_0, \ul{a}^0), (\N_1, \ul{a}^1)$, with given periodicity bases, are \emph{strictly periodically isotopic} if there is a continuous path of embedded nets $\N_t$ in $\fN$ with an associated continuous path of periodicity bases from $\ul{a}^0$ to $\ul{a}^1$.
\medskip

3.  $\N_0, \N_1$ are \emph{periodically isotopic} if  they have strictly periodically isotopic pairs for some choice of periodicity bases $\ul{a}^0, \ul{a}^1$.
\medskip

4. $\N_0, \N_1$ are \emph{topologically isomorphic}, or have the same topology, if their structure graphs (underlying nets)
are isomorphic as countable graphs.}
\medskip

\subsection{Translation-transitive $n$-grids}
We first consider embeddings of { $n$-grids} with a strong form of  homogeneity. Specifically
we give group-supergroup methods which  determine the periodic isotopy types of \emph{translation-transitive} { $n$-grids.} 
% and we write $\beta_{\rm tt}(n)$ for the number of these periodic isotopy types. 

Considering the  translation transitivity assumption it follows that the shift vectors relating parallel copies of a single component grid  are in fact coset representatives of some lattice with respect to the sublattice generated by the standard periodicity basis of a connected component. The number of cosets is equal to the index of a sublattice. This observation gives a recipe for generating translation-transitive $n$-grids, by enumerating superlattices of index $n$ for the
lattice of a connected component
while discarding the associated $n$-grids which  fail to be noncrossing.

A determination of index $n$ superlattices can be made with the following lemma. See also \cite{cas}, \cite{dav-et-al}.

\begin{lem}\label{l:arithmetic}
Let $n$ have a factorisation $n=p_1p_2p_3$, with $1\leq p_i\leq n,$ and let

\[
  L= 
  \left[ {\begin{array}{ccc}
   p_1 & 0 & 0\\
   q_1 & p_2 & 0\\
   r_1 & q_2 & p_3\\
  \end{array} } \right]
\]
be a matrix with integral coefficients satisfying $0\leq q_1< p_2$, $0\leq  q_2 < p_3$ and $0\leq  r_1 < p_3$. The rows of the inverse matrix $L^{-1}$ generate a superlattice of $\bZ^3$ of index $n$. Moreover, every superlattice of $\bZ^3$ of index $n$ has such a representation.
\end{lem}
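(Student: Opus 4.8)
The plan is to exploit the classical theory of the Hermite normal form (HNF) for integer matrices, which states that every full-rank sublattice of $\bZ^3$ has a unique basis whose matrix is lower-triangular with positive diagonal entries and with each off-diagonal entry reduced modulo the diagonal entry of its column. The key dictionary is that superlattices of $\bZ^3$ of index $n$ correspond bijectively to \emph{sublattices} of index $n$ under the inverse operation $L \mapsto L^{-1}$: a superlattice $\Lambda \supseteq \bZ^3$ with $[\Lambda:\bZ^3]=n$ has $\bZ^3 \subseteq \Lambda$ of the same index, and passing to the dual or simply inverting the generating matrix interchanges the two. So the first step is to reduce the superlattice statement to the standard enumeration of index-$n$ sublattices via HNF.

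The second step is to verify that the stated matrix $L$ is precisely the general HNF of an index-$n$ sublattice of $\bZ^3$. Since the determinant of a lower-triangular matrix is the product of its diagonal, the index condition $[\bZ^3 : L\bZ^3] = \det L = p_1p_2p_3 = n$ forces the factorisation $n=p_1p_2p_3$ with each $p_i\geq 1$. The uniqueness of HNF then gives the range constraints $0\leq q_1 < p_2$, $0\leq q_2 < p_3$, $0\leq r_1 < p_3$, which are exactly the reduction conditions: the entries below each diagonal pivot are reduced modulo that pivot. I would state the HNF existence-and-uniqueness theorem (which may be cited, e.g.\ via \cite{cas}) and then simply match the shape of $L$ against it, noting that the column of the $(i,j)$ off-diagonal entry determines which diagonal $p_j$ bounds it.

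The third step is the passage back to superlattices. Given the index-$n$ sublattice generated by the rows of $L$, its inverse transpose (or, in the row-convention used here, the rows of $L^{-1}$) generates a lattice containing $\bZ^3$ with index $n$, because $\det L^{-1} = 1/n$ means the fundamental domain shrinks by a factor $n$ and hence $n$ copies of the superlattice cell tile one copy of the $\bZ^3$ cell. The ``moreover'' clause—that every index-$n$ superlattice arises this way—follows by running the correspondence in reverse and invoking the HNF uniqueness for the associated sublattice.

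The main obstacle, such as it is, lies not in any deep difficulty but in bookkeeping the duality between superlattices and sublattices correctly, and in matching the paper's row-vector convention (``the rows of $L^{-1}$ generate'') against the standard column-vector formulation of HNF. One must be careful that the reduction ranges attached to $q_1, q_2, r_1$ correspond to the correct pivots ($q_1$ reduced mod $p_2$ since it sits in the second column, while $q_2$ and $r_1$ are both reduced mod $p_3$), and that the inversion preserves the index exactly. Once the HNF theorem is invoked as a black box, the remainder is a direct verification, so I expect the write-up to be short.
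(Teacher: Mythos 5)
The paper does not actually prove this lemma; it is stated with pointers to \cite{cas} and \cite{dav-et-al}, and your Hermite-normal-form strategy is exactly the intended route. The skeleton is right: index-$n$ superlattices of $\bZ^3$ correspond to integer matrices of determinant $n$ modulo a unimodular action, and HNF supplies unique orbit representatives.

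There is, however, a genuine slip at the step where you match the lemma's reduction ranges to ``the standard HNF of an index-$n$ sublattice,'' and it sits exactly where you predicted the danger would be. A basis of the superlattice $\Lambda$ is given by the rows of $L^{-1}$ and is determined up to \emph{left} multiplication of $L^{-1}$ by $GL(3,\bZ)$; hence $L$ is determined by $\Lambda$ only up to \emph{right} multiplication by unimodular matrices, and the relevant canonical form is the column-style HNF obtained by integer column operations on $L$. Adding a multiple of column $j$ to column $i<j$ changes the $(j,i)$ entry by multiples of $p_j$, so each subdiagonal entry ends up reduced modulo the diagonal entry of its \emph{row}: $q_1$ at position $(2,1)$ is reduced mod $p_2$, while $q_2$ and $r_1$ in row $3$ are reduced mod $p_3$ --- which is what the lemma states. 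Your write-up instead justifies the ranges by the \emph{column} of each entry (``$q_1$ reduced mod $p_2$ since it sits in the second column''; it sits in the second row and first column), and if one literally applied the row-style HNF to the sublattice spanned by the rows of $L$ (the normal form for the \emph{left} action), the ranges would come out as $0\le q_1<p_1$, $0\le q_2<p_2$, $0\le r_1<p_1$ --- a different set of matrices on which $L\mapsto\mathrm{rowspan}(L^{-1})$ is not injective. For instance, with $n=4$ the two row-HNF matrices with diagonal $\mathrm{diag}(2,2,1)$ and $(3,2)$-entry $0$ resp.\ $1$ differ by right multiplication by a unimodular matrix and so define the \emph{same} superlattice. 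The clean repair is either to work with the right $GL(3,\bZ)$-action from the outset, or to pass to the dual sublattice $\Lambda^{*}$, which is generated by the \emph{columns} of $L$ (the rows of $L^{T}$), and apply HNF there; with that correction the remainder of your argument, including the determinant and index computations, goes through as you describe.
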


A computational determination of the number,  $\beta_{\rm tt}(n)$, of periodic isotopy types, can now be implemented with the following 3-step algorithm. Some of the values are recorded in the summary Table 1.

\medskip

(i) Using the lemma, generate all superlattices with index $n$.

(ii) Discard such a superlattice if its corresponding $n$-grid  has edge crossings.

(iii) Reduce the resulting list to a (maximal) set of superlattices which are pairwise inequivalent under the point group of a primitive cubic lattice.

\medskip

We have indicated that this (practical) 3-step generation-and-reduction algorithm gives the number of  congruence classes of  translationally transitive $n$-grids. That this number also agrees with the (a priori smaller) number of periodic isotopy classes  (up to chirality) is essentially a technical issue.
 This follows from
% is established in Section \ref{ss:n-foldpcu} through 
Theorem \ref{t:multigridcount} (iii) and Appendix A. Moreover, for the same reason the algorithm determines exactly the translationally transitive $n$-grids which are  maximal symmetry periodic isotopes.
 
 We remark that a similar 3-step algorithm can be applied in the case of translationally transitive embeddings of e.g. $n$-fold {\bf dia}, $n$-fold {\bf srs} and other nets.  We conjecture that if connected components are crystallographic nets in their maximal symmetry configurations, then step (iii) leads directly to the classification into periodic isotopy classes.

\bigskip

\subsection{{A combinatorial enumeration of} n-grids}\label{ss:n-foldpcu} %former title Multigrids and n-fold pcu.
We now consider the wide class of general multigrids, with no further symmetry assumptions.
%On the other hand if the components are periodically nested in the natural way then the basis $ \{e_1,e_2,(e_1+e_2+e_3)/n\}$ is primitive with quotient graph equal to a single vertex bouquet graph with a depth $n$  labelling. 
%\begin{center}
%\begin{figure}[ht]
%\centering
%\includegraphics[width=4cm]{pcu-c4.jpg}
%\caption{A fragment of a 4-grid, with unit cell.{\color{cyan}DAVIDE - %do this diagram in monochrome, without the purple unit cell.}}
%\label{f:motifexamples}
%\end{figure}
%\end{center}
The combinatorial objects relevant to periodic isotopy type counting are given in terms of various finite groups acting on finite sets of patterns which we now define.
% If the components of a multigrid $\N$ are generically placed then the standard orthonormal basis $\ul{b}= \{e_1,e_2,e_3\}$ is  primitive and the labelled quotient graph has $n$ vertices with depth 1.

Let $T= \{1,\dots ,n\}^3,$ viewed as a discrete 3-torus, and 
let $C_n$ be the cyclic group of order $n$. In particular $C_n$ can act on $T$ by cyclically permuting one of the 3 coordinates.
Also, let $R$ be the rotation symmetry group of the  unit cube $[0,1]^3$. Then $R$ acts on the discrete torus $T$ in the natural way. 
%as well as on the set $\bR^3$.

Let $\X(n)$ be the finite set of $n$-tuples, or \emph{patterns}, $\{p_1,\dots , p_n\}$ where the points lie in $T$ and have \emph{distinct coordinates}, so that for all pairs $p_i, p_j$ the difference $p_i-p_j$ has nonzero coordinates. In particular $|\X(n)|= (n!)^2.$  These $n$-tuples in fact correspond to the coordinates of the nodes appearing in a unit cell of the $n$ components of a normalised $n$-grid.

Finally, for $n\in \bN$, let $\rho(n), \alpha(n), \beta(n)$, respectively, be the number of orbits in $\X$ under the natural action of the groups 
\[
R, \quad
C_n\times C_n \times C_n, \quad 
%C_n\times C_n \times C_n\times S_n, 
C_n\times C_n \times C_n \times R.
%\quad C_n\times C_n \times C_n \times S_n\times H.
\]

Recall that a linear graph knot for an embedded net $\N$ is determined by a choice of periodicity basis $\ul{a}$ and is denoted $\lgk(\N,\ul{a})$. In the case of an $n$-grid $\N$ with its standard periodicity basis $\ul{b}$ we refer to $\lgk(\N,\ul{b})$  as the standard linear graph knot for $\N$. Evidently
$\lgk(\N,\ul{b})$ appears as the union of $n$ disjoint translates in the flat 3-torus of
$K_{\rm pcu}$.

\begin{thm}\label{t:multigridcount} (i) The number of linear isotopy types of standard linear graph knots of $n$-grids is  $\alpha(n)$.

(ii) The number of  rotational isotopy types of standard linear graph knots of $n$-grids is  $\beta(n)$. 
%{\color{red}("rotational" more intuitive than "global")}

(iii) The number of periodic isotopy 
classes of  $n$-grids is  $\beta(n)$.
\end{thm}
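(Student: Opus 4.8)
The plan is to prove the three parts in sequence, reading off each count as the number of orbits of $\X(n)$ under a progressively larger group, with (iii) as the culmination. First I would fix the dictionary between $n$-grids and patterns. With the standard basis $\ul{b}$ each component of an $n$-grid is a translate of $\M_{\rm pcu}$, so its standard graph knot $\lgk(\N,\ul{b})$ is a union of $n$ disjoint axial copies of $K_{\rm pcu}$ and is determined by the unordered set of its $n$ nodes $q_1,\dots,q_n\in[0,1)^3$. A direct inspection of the axial loop edges shows that two copies collide exactly when their nodes share a coordinate; hence $\lgk(\N,\ul{b})$ is proper precisely when the nodes have pairwise distinct coordinates in each of the three directions. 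The space of such node sets factors as $D_n\times D_n\times D_n$, where $D_n$ is the space of $n$ distinct points on the circle $\bR/\bZ$, whose connected components are exactly the cyclic orders. A linear isotopy moves the nodes without a coordinate coincidence, so it preserves the cyclic order in each direction (the combinatorial type), and conversely two node sets of equal combinatorial type lie in one component of $D_n\times D_n\times D_n$ and so are linearly isotopic. Discretising the three cyclic orders records the combinatorial type as an element of $\X(n)$ up to an independent choice of cyclic origin in each direction, i.e.\ up to the action of $C_n\times C_n\times C_n$; this yields (i). For (ii) I would note that the rotations lying in $GL^+(3,\bZ)$ are exactly the signed permutation matrices of determinant $1$, namely the $24$ cube rotations $R$, whose induced automorphisms $X_\pi$ permute and cyclically reverse the coordinates of $T$; rotational linear isotopy therefore enlarges the group to $\langle C_n^{\,3},R\rangle$, giving the count $\beta(n)$.

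For the main statement (iii) I would first reduce to the standard model: by the observations following Definition \ref{d:deformationequivalentnets}, every $n$-grid is periodically isotopic to one with periodicity basis $\ul{b}$ and, after an initial linear isotopy, to a normalised one whose nodes have discrete coordinates, i.e.\ to a pattern in $\X(n)$. It then suffices to show that two normalised $n$-grids are periodically isotopic if and only if their patterns lie in one orbit under $\langle C_n^{\,3},R\rangle$. The ``if'' direction is easy: shifts are realised by translations and elements of $R$ by continuous rotations of $\bR^3$, both periodic isotopies, and equal combinatorial type gives a linear isotopy which extends periodically by Proposition \ref{p:deformequivalence}.

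For the ``only if'' direction, suppose $\N$ and $\N'$ are periodically isotopic. By Proposition \ref{p:deformequivalence} there are bases $\ul{a}=Z\ul{b}$ and $\ul{a'}=Z'\ul{b}$, with $Z,Z'\in SL(3,\bZ)$, for which $\lgk(\N,\ul{a})$ and $\lgk(\N',\ul{a'})$ are linearly isotopic. Since a change of basis by $Z$ acts on the graph knot by the flat-torus automorphism $Z_\pi$, applying the appropriate automorphism to both sides produces a linear isotopy between the standard graph knot $\lgk(\N,\ul{b})$ and $M_\pi\lgk(\N',\ul{b})$, where $M=Z(Z')^{-1}\in SL(3,\bZ)$. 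Now the decisive invariant enters: a linear isotopy in a fixed flat torus preserves the homology class of every cycle, in particular of each loop edge. In $\lgk(\N,\ul{b})$ the loop edges realise the classes $\pm e_1,\pm e_2,\pm e_3$ (each with multiplicity $n$), while in $M_\pi\lgk(\N',\ul{b})$ they realise $\pm Me_1,\pm Me_2,\pm Me_3$. Matching these multisets forces $M$ to permute $\{\pm e_1,\pm e_2,\pm e_3\}$, so $M$ is a signed permutation matrix; with $\det M=1$ this gives $M\in R$. Hence $\lgk(\N,\ul{b})$ and $\lgk(\N',\ul{b})$ are rotationally linearly isotopic, so their patterns share a $\langle C_n^{\,3},R\rangle$-orbit. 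Combining both directions identifies the periodic isotopy classes of $n$-grids with these orbits, which by (ii) number $\beta(n)$.

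The hard part will be this ``only if'' step, namely controlling the a priori unrestricted $SL(3,\bZ)$ freedom in the choice of periodicity bases supplied by Proposition \ref{p:deformequivalence}; the homology-class invariant is exactly what collapses that freedom onto the finite cube-rotation group $R$. Two technical points need care and are where the normalisation (and Appendix A) do their work: verifying that the bases $\ul{a},\ul{a'}$ may be taken over the common primitive period lattice $\bZ^3$, so that $M$ is genuinely integral and the graph knots retain $n$ vertices (ruling out larger-period redescriptions); and checking that the passage between continuous node configurations and their discrete patterns in $\X(n)$ is compatible with both linear isotopy and the $C_n^{\,3}$-action.
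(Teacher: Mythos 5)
Your treatment of (i) and (ii) is essentially the paper's: collisions between components occur exactly at coordinate coincidences, linear isotopy preserves the three cyclic orders, and an equal-spacing isotopy shows the triple of cyclic orders (modulo $C_n\times C_n\times C_n$, and additionally modulo the $24$ cube rotations for (ii)) is a complete invariant. For (iii) your key idea --- that homology classes of cycles are preserved under linear isotopy in a fixed flat torus and therefore force the basis-change matrix into the signed permutation group --- is also the invariant the paper exploits.

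However, there is a genuine gap precisely at the step you defer. Proposition \ref{p:deformequivalence} only supplies \emph{some} right-handed periodicity bases $\ul{a}$, $\ul{a'}$ for which the graph knots are linearly isotopic; nothing forces these to be primitive, so you cannot write $\ul{a}=Z\ul{b}$ with $Z\in SL(3,\bZ)$. In general $Z$ is an integer matrix with $\det Z\geq 1$, the graph knot $\lgk(\N,\ul{a})$ has $n\,|\det Z|$ vertices rather than $n$, the matrix $M=Z(Z')^{-1}$ need not be integral, and the loop edges of the standard knot no longer appear as single edges of $\lgk(\N,\ul{a})$, so the multiset-of-homology-classes argument does not apply as stated. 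This is not a routine normalisation: a periodic isotopy may genuinely exist only along a path of non-primitive bases, and two things must be proved. First, one must identify the terminal basis; the paper does this by reducing to diagonal amplifications $k\cdot\ul{b}$ and then arguing with the \emph{edge-lengths} of minimal cycles in each generator class of $H_1(\bT^3;\bZ)$, together with a uniqueness property of such minimal cycles that fails whenever a basis vector is not axis-parallel --- homology classes alone are too coarse here. Second, and this is the missing idea, one needs a de-amplification statement (Lemma \ref{l:amplifiedgrids}): if the amplified graph knots $k\cdot K_0$ and $k\cdot K_1$ are linearly isotopic then so are $K_0$ and $K_1$. The paper proves this by tracking distinguished ``chain'' subgraph knots and their cyclic orders through the isotopy. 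Without these two arguments the ``only if'' direction of your orbit identification, and hence the count $\beta(n)$ in (iii), is not established.
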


The proof of this theorem is given in Appendix A. The essential argument involves a \emph{discretisation} in which, in (ii) for example, the components are separately shifted by a (joint) isotopy to an evenly spaced position. Then $n$ nodes in a unit cell corresponds to a pattern of $n$ coordinate distinct points in the discrete torus $\{1,2,\dots ,n\}^3$. 
 Additionally, for (iii) one must resolve the technical problem in Remark \ref{r:conjecture} in the case of $n$-grids and show that the triple cyclic order of coordinates (modulo the rotation group $R$) is indeed a periodic isotopy invariant.  We do this in Lemma \ref{l:amplifiedgrids}, and the equivalence given in Proposition \ref{p:deformequivalence} is a helpful step in the proof. We also note that the periodic isotopy that one needs to construct in the proof, when the cyclic orders coincide modulo  $R$, is simply a concatenation of a periodic isotopy of local component translations (to achieve equal spacing),  followed by a an elementary periodic isotopy induced by a path of affine motions corresponding to a (bulk) rotation and final translation. 

%{\color{red}TO DO} remark on entangled non max symmetry 2-pcu. ?

%The latter will be possible if and only if the triple cyclic ordering of the 2 nets is the same (for which we need the technical lemma). 

\subsection{Translational isotopy and framed $n$-grids} 
The general formulation of periodic isotopy of necessity entails some technical complexity in the proofs. We now note two restricted but natural $n$-grid contexts where the determination the number of equivalence classes simplifies. We omit the formal proofs. In the first of these we define a more restricted form of isotopy while in the second context we distinguish, or colour, one of the component grids. 

Let us say that a multi-grid is 
%\begin{defn} An
 \emph{aligned} if 
%it is a translationally homogeneous $n$-grid $\M$ whose 
its components $\M_i$ are translates of the model net $\M_{\rm pcu}$ with node set $\bZ^3$. 
%{\color{blue}old definition removed}
%\end{defn}

\begin{defn}Two
aligned $n$-grids $\M, \M'$ are \emph{translationally isotopic} if for some labelling of the components
there are continuous functions $g_i: [0,1] \to \bR^3$, for $1\leq i \leq n$, with $g_i(0)=0$ for all $i$, such that 
\medskip

(i) for each $t$ the embedded net
\[
\M_t = (\M_1+g_1(t)) \cup \dots \cup (\M_n+g_n(t)) 
\] 
is a (noncrossing) linear 3-periodic net,

(ii) $\M= \M_0$ and  $\M' =\M_1$.
\end{defn}

This simple form of isotopy in fact corresponds to strict periodic isotopy for these nets with respect to the standard periodicity basis. It is a form of ``local" periodic isotopy in the sense that the deformation paths of the nodes are localised in space. In particular deformation paths incorporating bulk rotations are excluded.

\begin{thm} The number of translational isotopy classes of aligned $n$-grids is $\alpha(n)$.
\end{thm}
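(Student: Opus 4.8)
The plan is to identify translational isotopy of aligned $n$-grids with linear isotopy of their standard linear graph knots, so that the assertion reduces directly to Theorem~\ref{t:multigridcount}(i), which already counts the latter classes as $\alpha(n)$.

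First I would record the dictionary between an aligned $n$-grid and its standard graph knot. Writing $\pi : \bR^3 \to \bT^3$ for the quotient map associated with the standard basis $\ul{b}$, an aligned $n$-grid $\M = (\M_{\rm pcu}+v_1) \cup \dots \cup (\M_{\rm pcu}+v_n)$ has, with respect to $\ul{b}$, a quotient graph $H$ with $n$ vertices, each carrying three axial loop edges, and a placement map sending the $i$-th vertex to $\pi(v_i)$; this placement is exactly $\lgk(\M,\ul{b})$. Because each component has a single-vertex quotient graph, the position of a vertex in the flat torus determines its three loops, and moving that vertex corresponds precisely to translating the associated copy of $\M_{\rm pcu}$ in $\bR^3$. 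Thus an aligned $n$-grid and its standard graph knot determine one another, and the noncrossing condition on $\M$ is equivalent to essential disjointness of the edges of $\lgk(\M,\ul{b})$, since, as established in Section~\ref{s:graphknots}, the periodic extension of the graph knot recovers $\M$ verbatim when the basis is already standard.

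Next I would verify that the two isotopy notions correspond. Given a translational isotopy $(g_i)$ with $g_i(0)=0$, the paths $t \mapsto \pi(v_i+g_i(t))$ form a continuous path of placements of $H$ in $\bT^3$, and the equivalence of the noncrossing conditions makes this a linear isotopy of $\lgk(\M,\ul{b})$. Conversely, a linear isotopy $f_t$ moves the $n$ vertices along continuous paths in $\bT^3$; since $\pi$ is a covering map, each such path lifts uniquely to a path in $\bR^3$ once its starting lift $v_i$ is fixed, and setting $g_i(t)$ equal to the displacement of that lift from $v_i$ yields a translational isotopy inducing $f_t$. This is precisely the assertion, noted after the definition, that translational isotopy coincides with strict periodic isotopy relative to the fixed basis $\ul{b}$. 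The correspondence respects the equivalence relations on both sides, so two aligned $n$-grids are translationally isotopic if and only if their standard graph knots are linearly isotopic, and Theorem~\ref{t:multigridcount}(i) then delivers the count $\alpha(n)$.

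The only step requiring genuine care is the lifting of the torus vertex paths: a continuous loop in $\bT^3$ need not lift to a loop in $\bR^3$, which is exactly why translational isotopies with integer ``wrap-around'' shifts must be matched with torus-level vertex motions that traverse the fundamental domain. The covering-space property guarantees that the lift exists, is unique given its initial point, and is continuous with $g_i(0)=0$, so no obstruction arises; this is the sole place where the topology of the flat torus enters, and the unordered (set-level) treatment of the components on the net side matches the unlabelled vertices on the graph-knot side, consistently with the relation $|\X(n)|=(n!)^2$.
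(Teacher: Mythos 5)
Your proposal is correct and follows exactly the route the paper intends: the paper omits the formal proof but states that translational isotopy coincides with strict periodic isotopy relative to the standard basis $\ul{b}$, which (via the graph-knot correspondence of Section \ref{s:graphknots} and Proposition \ref{p:deformequivalence}) reduces the count to Theorem \ref{t:multigridcount}(i). Your careful treatment of the covering-space lifting of the vertex paths is a welcome filling-in of the detail the paper leaves implicit.
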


For the second variation, let us define a \emph{framed $n$-grid} to be an $(n+1)$-grid with a distinguished component, the framing component. Thus  a \emph{framed $n$-grid} is a coloured $n+1$ grid where all but 1 of the components are of the same colour.  
Periodic isotopy for coloured $n$-grids may be defined exactly as before but with the additional requirement that the maps $(f_t)$ respect colour. 

 It is evident that the cube rotation group $R$ acts naturally on such framed $n$-grids. Also, as indicated in our remarks following Theorem \ref{t:multigridcount}, counting periodic isotopy types reduces to counting orbits of patterns $p$ of $n+1$ coordinate-disjoint points, $p = (p_1, \dots , p_{n+1})$, in the discrete torus $\{1, 2,\dots , n+1\}^3$.
However, in view of the colour preservation we may assume, by shifting, that $p_{n+1}$ lies in the $R$-orbit of $(1,\dots ,1)$, and from this it follows (varying the proof of Theorem \ref{t:multigridcount})  that the periodic isotopy classes correspond to the $R$-orbits of the $n$-tuples $(p_1, \dots , p_n)$.

%{\color{blue}OLD: The presence of the distinguished component means that the initial ordering of the node coordinates of the $n$ nondistinguished components cannot be cyclically changed.} 

%IGOR : Section 8.2, definitions of rho(n) and "framed grids". I think we might want to mention here that framed grids naturally arise from the action of R on a set of grids, since one of the grids can be always chosen to be (setwise) fixed by R. Otherwise it is not clear whether the rho(n) mentioned in the introductory part of Section 8.2 and the definition given later (I mean, before Theorem 8.5) do actually refer to the same thing.

%This leads to the following count for periodic isotopy. 

\begin{thm}
The number of periodic isotopy classes of framed $n$-grids is
$\rho(n)$.
\end{thm}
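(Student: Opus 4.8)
The plan is to derive the statement as a colour-refined version of Theorem \ref{t:multigridcount}(iii), since a framed $n$-grid is precisely an $(n+1)$-grid in which one component has been distinguished and all isotopies are required to preserve that distinction. First I would invoke the discretisation established in the proof of Theorem \ref{t:multigridcount} (Appendix A): after a preliminary periodic isotopy consisting of local component translations, the $n+1$ components of a standard $(n+1)$-grid may be evenly spaced so that the node coordinates in a single unit cell form a pattern of $n+1$ coordinate-distinct points in the discrete torus $\{1,\dots,n+1\}^3$, and two such nets are periodically isotopic exactly when their patterns lie in a common orbit of the group generated by the cyclic shift groups $C_{n+1}\times C_{n+1}\times C_{n+1}$ and the cube rotation group $R$. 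For framed $n$-grids the identical discretisation applies, the only difference being that each pattern now carries a marked point recording the framing component and that the admissible maps $(f_t)$ must respect colour; thus the periodic isotopy classes correspond to orbits of \emph{marked} patterns under the same group, acting through colour-preserving symmetries.

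The second step is to eliminate the shift group using colour preservation. The subgroup $C_{n+1}\times C_{n+1}\times C_{n+1}$ acts simply transitively on the possible positions of the single marked point, so by an initial shift I may normalise the marked point to the class of $(1,\dots,1)$. The key observation is that the stabiliser of this single point inside the affine group generated by the shifts and $R$ is again a copy of $R$: for each rotation $M\in R$ there is exactly one compensating shift $v$ with $M(\cdot)+v$ fixing the marked point, so the stabiliser is the recentred rotation group $\{\,x\mapsto M(x-q)+q : M\in R\,\}$ with $q=(1,\dots,1)$. Consequently the colour-preserving periodic isotopy classes are in bijection with the $R$-orbits of the configuration formed by the remaining $n$ unmarked nodes, which, being coordinate-distinct from one another and from the marked point, occupy the $n$ remaining coordinate values and so (after the evident relabelling) constitute exactly an element of $\X(n)$.

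It then remains to confirm that this residual action of $R$ on $\X(n)$ is the action used to define $\rho(n)$. The coordinate-permuting elements of $R$ act identically in both settings, while a coordinate-reversing element acts on the $n$ surviving values by the order reversal $x\mapsto (n+1)-x$, which is precisely the natural cube reversal on $n$ evenly spaced positions recentred at the marked point. Hence the residual $R$-set is the defining $R$-set of $\X(n)$, and the number of framed periodic isotopy classes equals the number $\rho(n)$ of its $R$-orbits; if desired this can be confirmed termwise via Burnside's Lemma by matching $|X_g|$ for each $g\in R$, exactly as in Section \ref{ss:burnsidemethod}.

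The main obstacle is the first step: one must ensure that the discretisation and the equivalence of Proposition \ref{p:deformequivalence} survive the colour constraint, that is, that distinguishing one component neither creates nor destroys isotopies. This is where the proof of Theorem \ref{t:multigridcount} is genuinely \emph{varied}, since the evenly-spacing isotopy and the bulk-rotation-plus-translation isotopy used there must now be carried out colour-compatibly, and one must verify (in the spirit of Lemma \ref{l:amplifiedgrids} and Remark \ref{r:conjecture}) that once the framing component is pinned the correct complete invariant is the cyclic order of the remaining $n$ components modulo $R$ alone, rather than modulo the full shift-and-rotation group. Granting this colour-compatible discretisation, the reduction to $R$-orbits of $\X(n)$ and hence the count $\rho(n)$ follow as above.
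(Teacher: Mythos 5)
Your proposal is correct and follows essentially the same route as the paper, which itself only sketches the argument (the formal proofs in this subsection are omitted): discretise to marked patterns of $n+1$ coordinate-disjoint points, use colour preservation to shift the framing component's node to $(1,\dots,1)$, and identify the residual classes with $R$-orbits of the remaining $n$-tuples in $\X(n)$. Your explicit computation of the stabiliser of the marked point as a recentred copy of $R$, and your flagging of the need for a colour-compatible version of the discretisation and of Lemma \ref{l:amplifiedgrids}, usefully fill in details the paper leaves implicit.
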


\subsection{Employing Burnside's lemma} 
We can now make use of Burnside's lemma to compute values of $\alpha(n), \beta(n)$ and $\rho(n)$. The following formula readily shows that $\alpha(5) =128, \alpha(7)= 74088$ for example.

\begin{prop}
Let $p$ be a prime number. Then
\[
\alpha(p) =  \frac{1}{p^3}((p!)^2 + (p-1)^3p^2)
\]
\end{prop}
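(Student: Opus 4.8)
The plan is to apply Burnside's lemma directly to the action of $G = C_p\times C_p\times C_p$ on $\X(p)$, exploiting the fact that for prime $p$ the orbit structure of the translation action is completely transparent. First I would identify $G$ with the additive group $(\bZ/p)^3$ acting on the discrete torus $T\cong(\bZ/p)^3$ by translation, each cyclic factor shifting one coordinate. This action is free and transitive, so a nonzero element $g=(a,b,c)$ has all of its orbits on $T$ of common size equal to the order of $g$ in $G$; and since $p$ is prime, every nonzero $g$ has order exactly $p$, hence partitions $T$ into $p^2$ orbits each of size $p$.

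Next I would count, for each $g\in G$, the number of patterns $P\in\X(p)$ fixed by $g$. The identity $g=0$ fixes every pattern, contributing $|\X(p)|=(p!)^2$. For $g\neq 0$ a fixed pattern must be a union of $g$-orbits; since $\lvert P\rvert=p$ equals the common orbit size, $P$ must be a \emph{single} $g$-orbit $\{q+jg:0\le j\le p-1\}$. The crucial observation is that such an orbit is a valid pattern (all three coordinate projections injective) precisely when $a,b,c$ are all nonzero: if $a\neq 0$ then the first coordinates $x_q+ja$ run over all residues mod $p$ and are distinct, and likewise for the other two coordinates, whereas a vanishing component forces a repeated coordinate and violates the distinctness requirement. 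Consequently, if some component of $g$ vanishes then $g$ fixes no pattern of $\X(p)$, while if all components are nonzero then each of the $p^2$ orbits is itself a valid pattern, so $g$ fixes exactly $p^2$ patterns.

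Finally I would assemble the Burnside sum. There is one identity element contributing $(p!)^2$; there are $(p-1)^3$ elements with all three components nonzero, each contributing $p^2$; and the remaining elements (nonzero, with at least one zero component) contribute $0$. Therefore
\[
\alpha(p)=\frac{1}{|G|}\sum_{g\in G}\bigl|\{P\in\X(p): g\cdot P=P\}\bigr|=\frac{1}{p^3}\bigl((p!)^2+(p-1)^3p^2\bigr),
\]
as claimed; one may sanity-check $\alpha(5)=128$ and $\alpha(7)=74088$ against this formula.

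The one step requiring care — and the reason primality is used essentially — is the reduction of a fixed pattern to a single $g$-orbit. For composite $n$ a nonzero $g$ may have order a proper divisor of $n$, so a fixed pattern would be a union of several orbits of size smaller than $n$, and the coordinate-distinctness analysis across distinct orbits becomes genuinely more delicate (one must track which shifts have each coordinate of order $\geq$ the orbit size and then count compatible unions). Primality collapses all of this to the single clean case above, which is what makes the closed form so simple.
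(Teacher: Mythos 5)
Your proof is correct and follows essentially the same route as the paper's: a direct application of Burnside's lemma with the identical case analysis (identity contributes $(p!)^2$, elements with a zero component fix nothing, and the $(p-1)^3$ fully nonzero elements each fix $p^2$ patterns). The only cosmetic difference is that you justify the count $p^2$ by observing a fixed pattern must be a single $g$-orbit, whereas the paper parametrizes the same fixed patterns by the unique node lying on a chosen face of $T$; these are the same observation.
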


\begin{proof}
Note that a group element $g=abc\neq 000$ in $C_p\times C_p\times C_p$ with $ a$ or $b$ or $c$ equal to $0$ does not fix any pattern under the cyclic action on $T=\{1,2,\dots , p\}^3$ and so $|\X_{abc}|=0$ in this case.
Also, every pattern is fixed by the identity element and so $|\X_{000}|=(p!)^2$. It remains to consider the $(p-1)^3$ group elements $abc$ with none of $a,b,c$ equal to $0$. 

The group element $111$ acts as a diagonal shift  and so any fixed  pattern of nodes in $T$ is determined by the unique node occupying a particular face of $T$. Conversely any of the $p^2$ node locations on this face determines a unique fixed pattern for the action of $111$. Thus  $|\X_{111}|=p^2$. 

Since $p$ is prime the same argument applies to any group element $abc$ with none of $a,b,c$ equal to the identity element $0$, since $a,b,c$ each have order p. There are $(p-1)^3$ such elements $abc$ and so the formula now follows from Burnside's Lemma.
\end{proof}

 The case of composite $n$ is similar. In the case that each of $a,b,c$ have order $r$ where $r$ divides $n$ the size
of the fixed set $\X_g$ for $g=abc$ is the product $n^2(n-r)^2(n-2r)^2\cdots r^2$. All other elements except the identity have no fixed patterns. In this way we obtain $\alpha(4) = 12 $ and $\alpha(6) = 2424$.

Similarly, for the framed $n$-grids one may compute $\rho(2)=1, \rho(3)=4, \rho(4)= 33$. Evidently there is rapid subsequent growth rate since the Burnside Lemma formula quickly leads to the lower bound $\rho(n)\geq (n!)^2/(24\times n^3)$.

\subsection{Classes of embedded $n$-{\bf pcu}}\label{ss:classesof_npcu}

Figure \ref{f:summary} gives examples of small $n$-grids with contrasting transitivity properties.

\begin{center}
\begin{figure}[ht]
\centering
\includegraphics[width=14cm]
{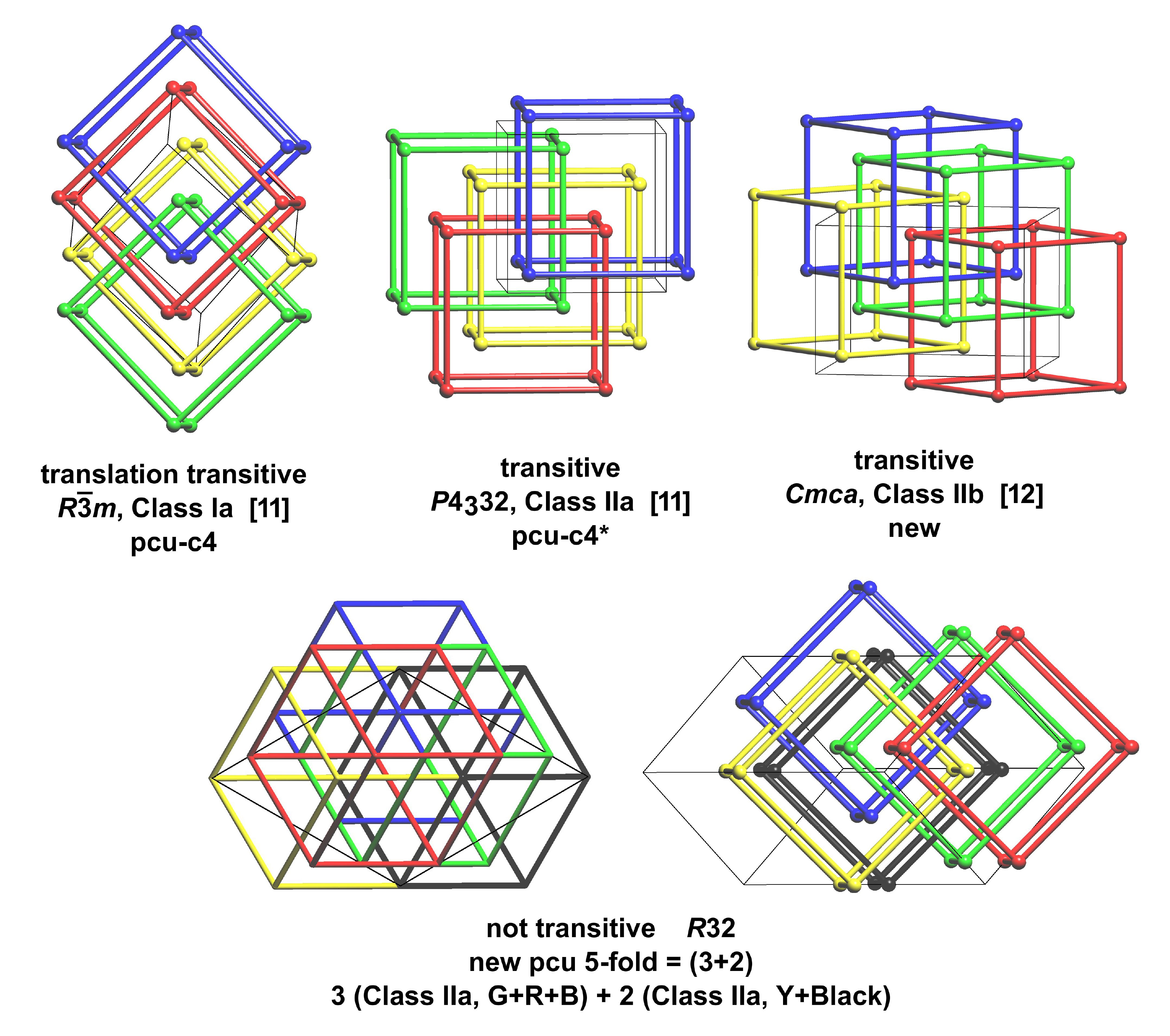}
\caption{Four examples of shift homogenous $n$-grids with interpenetration class according to \cite{bab-bla-car-cia-pro}, the vertex and edge transitivity [ve] and the RCSR names where available. %(Colours distinguish the components.
}
\label{f:summary}
\end{figure}
\end{center}

In  Table 1 we summarise the number of classes of $n$-grids for various types of $n$-grid and forms of isotopy for some small values of $n$ with
%The values of $\rho(n), 
the values of $\alpha(n)$ and $\beta(n) $  obtained via Burnside's Lemma as before. The count $\beta_t(n)$ 
is for \emph{transitive} $n$-grids in the sense given in Section \ref{s:homtype}, and for $n$-grids this coincides with vertex transitivity. The count $\beta_{\rm tt}(n) $ is for \emph{translation-transitive} $n$-grids which have components that are equivalent by translations in the space group. These counts, which coincide if $n$ is prime, are obtained using the group-supergroup algorithm of the Section \ref{ss:maxsymmetryMulticomponent}. 
%For each $n$ there is an elementary {translation-transitive} $n$-grid which has a diagonal nesting pattern for its components. 

\bigskip

\begin{center}
\begin{tabular}{|c|c|c|c|c|c|c|c|}
\hline
$n$   & 2  &3 & 4 & 5&6&7&$n$-grids/isotopy \\
\hline
\hline
%$\rho(n)$  & 1  &4 & 33 & - & -&-&  framed $n$-grids/periodic isotopy\\
%\hline
$\alpha(n)$   & 1  &4 & 12 & 128&2424 &74088& $n$-grids/translational isotopy 
%{graph knots on fixed 3-torus}
\\
\hline
$\beta(n)$  & 1  &1 & 3 & 9 &89&-& $n$-grids/periodic isotopy\\
\hline
$\beta_t(n)$  & 1  &1 & 3 & 2 &7&4& {transitive $n$-grids}/periodic isotopy\\
\hline
$\beta_{\rm tt}(n)$   & 1  &1 & 1 & 2&1&4&{translation-transitive $n$-grids}/periodic isotopy\\
\hline
\hline
\end{tabular}\\ 
\end{center}\medskip
\begin{center}
Table 1. Counts of isotopy classes of $n$-grids.
\end{center}
%\bigskip
%\newpage

%{\color{cyan}Could also list in this table 2 extra rows for the congruence class counts of t-t nets for some n-dia and n-srs.}

\medskip

Figure \ref{f:summary} summarises homogeneity and transitivity types of multicomponent embedded nets.
\medskip

\begin{center}
\begin{figure}[ht]
\centering
\includegraphics[width=16cm]
{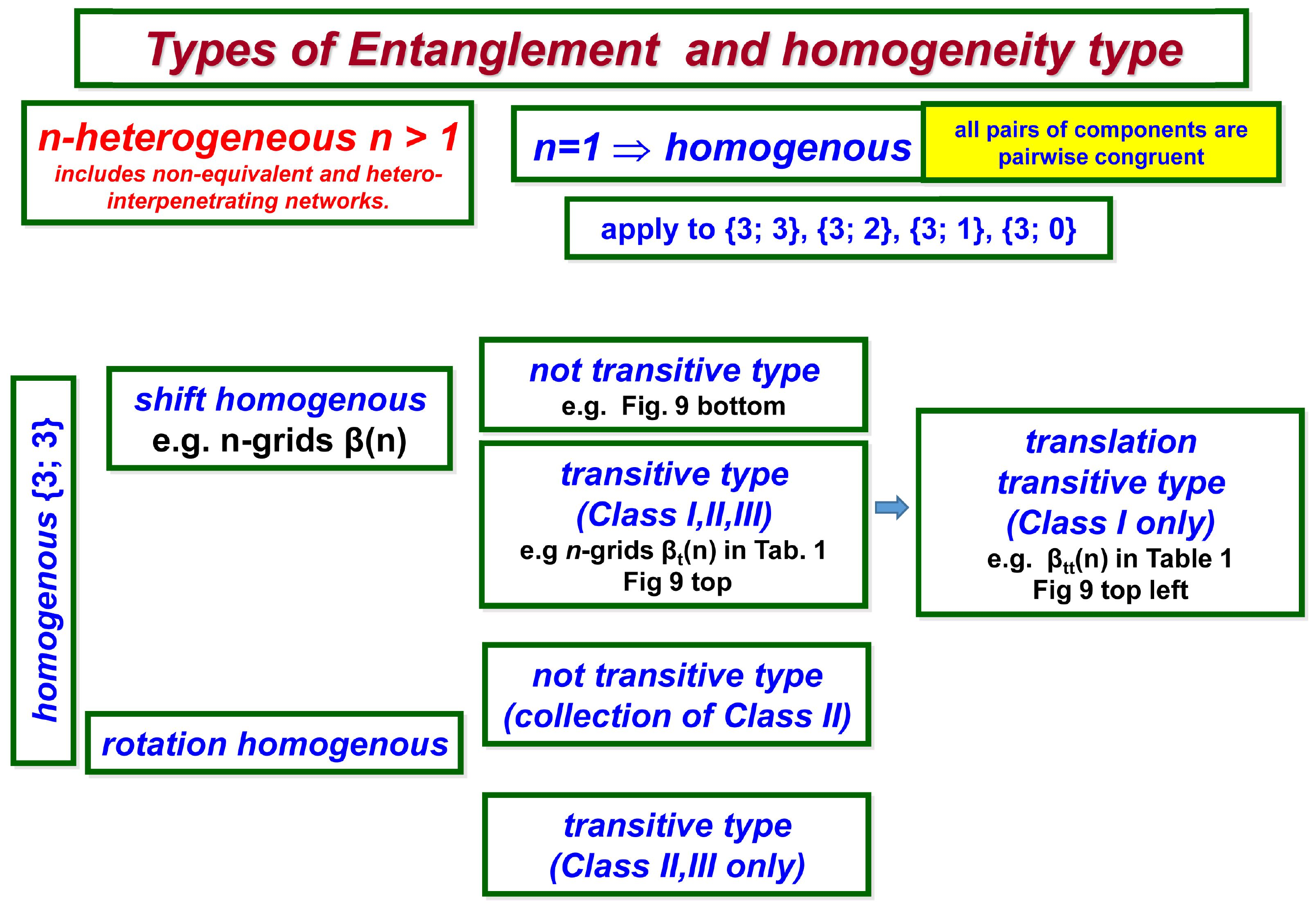}
\caption{
%Homogeneity and transitivity. {\color{red}EDIT Fig numbers} 
Homogeneity and transitivity types of entangled nets.}
\label{f:summary}
\end{figure}
\end{center}

\section{Classifying lattice nets}\label{s:latticenets}

\subsection{Depth 1 disconnected nets with a single vertex QG}
A model net $\M$ which has adjacency depth 1 with respect to the standard basis $\ul{b}$ is determined by a set $F_e$
of edge representatives $[a,b]$ for the translational orbits of edges. In the case that there is a single orbit for the nodes we may assume that there is a node at the origin and choose the \emph{unique} edge-orbit representative $[a,b]$ such that $(a,b)$ is a subset of the semiopen cube $[0,1)^3$. Such  representative edges are determined up to sign by the vectors $a-b$, or equivalently in this case, by the labels of the depth 1 labelled quotient graph $LQG(\M,\ul{b})$.
We use the following terminology for edges in $F_e$. This will also be useful in subsequent sections.

The 3 axial edges are denoted
$a_x, a_y, a_z$  and   $d_1, \dots , d_4$ denote the 4 diagonal edges which are incident to
$(0,0,0)$, $(1,0,0)$, $(1,1,0)$, $(0,1,0)$ respectively.
 The 3 face diagonal edges which are incident to the origin are denoted $f_x, f_y, f_z$, corresponding to the directions
$(0,1,1)$, $(1,0,1)$, $(1,1,0)$, while the edges $g_x, g_y, g_z$ are the other face diagonals, parallel to the respective vectors $(0,1,-1)$, $(1,0,-1)$, $(1,-1,0)$. Thus we may define any set $F_e$ by means of an ordered subword $w$ of the ordered word
\[
a_xa_ya_zf_{x}f_{y}f_{z}g_xg_y{g_z}d_1d_2d_3d_4
\]

In view of the noncrossing condition it is elementary to see that every model net $\M$ is affinely equivalent, simply by rotations, to a \emph{standardised model net} defined by the \emph{standard ordered word} of the form
$w= w_1w_2d_1$ or $w_1w_2$ , where $w_1$ is either $a_x, a_xa_y, a_xa_ya_z$ or the null word, and $w_2$ is a face subword with 0, 1, 2 or 3 letters, of which there are 27 possibilities.
%{\color{red}TO DELETE: Thus, excluding the null word, there are $4 \times 27 \times 2 -1 = 215$ standardised model nets.}

We now determine the depth 1 embedded bouquet nets that are disconnected, that is, which have more than one and possibly infinitely many connected components. It turns out that there are 6 embedded nets up to affine equivalence and we now give 6 model nets for these types.
\medskip

(i) $\M_{\rm a}$ is the model net determined by $F_e=\{a_x\}$ and consists of parallel copies of a 1-periodic linear subnet. %This net is translation transitive with dimension type $\{1\}$.

(ii)  $\M_{\rm aa}$ is determined by the word  $a_xa_y$ and is the union of parallel planar embeddings of {\bf sql}.
%It is translation transitive with dimension type $\{2\}$.

(iii) $\M_{\rm aaf_z}$ is the net for $a_xa_yf_z$ and is the union of parallel planar embeddings of {\bf hxl}.

(iv) $\M_{\rm fff}$ is the net for $f_xf_yf_z$ and is the translation transitive union of 2 disjoint copies of an embedding of ${\bf pcu}$.

(v) $\M_{\rm ggd}$ is the net for $g_xg_yd_1$ and is the translation transitive union of 3 disjoint copies of an embedding of ${\bf pcu}$.

(vi) $\M_{\rm ggg}^d$ is the net for $g_xg_yg_zd_1$ and is the translation transitive union of 3 disjoint copies of an embedding of ${\bf hex}$.
\medskip

\begin{center}
\begin{figure}[ht]
\centering
\includegraphics[width=4.3cm]{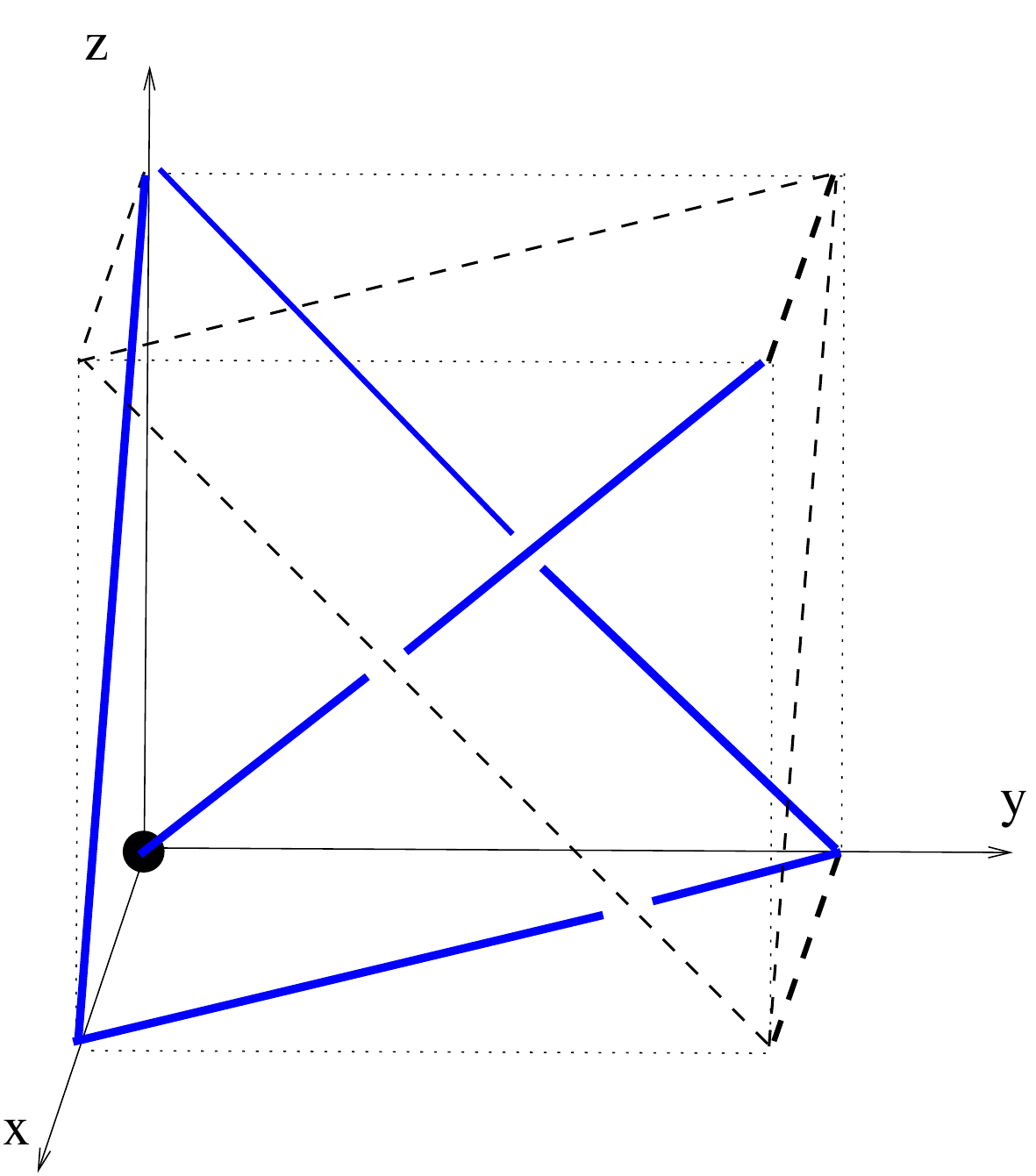}\quad \quad \quad
\includegraphics[width=4.3cm]{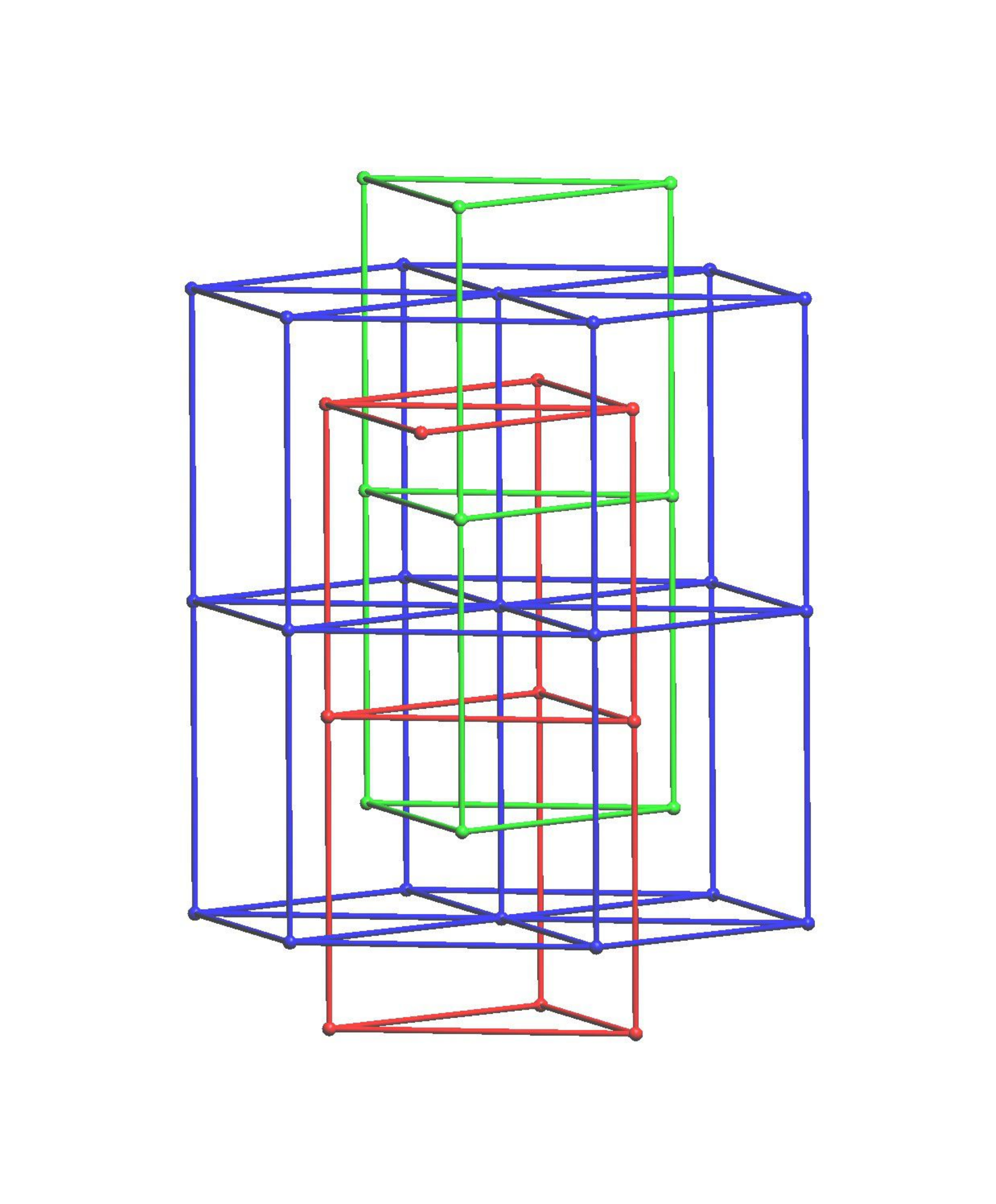}
%%%latex.exe does not like this .jpg from Davide - "no bounding box" Seems that I cannot deal with this AND .ps files with latex2pdf or quickbuild
\caption{\{a\} A single vertex quadruple-edged building block for the model net $\M_{\rm ggg}^d$. \{b\} A (rescaled) fragment of this net. The diagonal edge in the building block gives a $3^2$-penetrating edge.}
\label{f:gggd}
\end{figure}
\end{center}

In the above list, and in Tables 1, 3 below, we use a compact notation where the letter subscripts for the nondiagonal edges are suppressed if they appear in alphabetical order, and where $d$ indicates the diagonal edge $d_1$.
Thus, the model net for $w= g_xg_yg_zd_1$, which could be written as $\M(g_xg_yg_zd_1)$, is written in the compact form $\M_{\rm ggg}^d$. Its repeating unit, or motif, is indicated in Figure \ref{f:gggd} along with a fragment of the embedding rotated so that the penetrating edges are vertical.

\begin{thm}
There are 6 affine equivalence classes of disconnected embedded nets with adjacency depth 1 and a single vertex quotient graph.
\end{thm}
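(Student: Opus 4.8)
The plan is to translate the statement into a purely lattice-theoretic problem and then exploit the boundedness of determinants of depth-$1$ direction vectors. After normalising $\ul{b}$ to the standard basis, a depth-$1$ model net with a single-vertex quotient graph has node set $\bZ^3$ and is determined by the set $D\subseteq\{-1,0,1\}^3\setminus\{0\}$ (taken modulo sign) of its possible loop-edge directions, subject to the properness constraints of Lemma \ref{l:7and8lemma} and Proposition \ref{p:divisible} (the difference of any two loop labels is indivisible, and each pair of labels generates a saturated rank-$2$ sublattice). The crucial observation is that the connected component of the origin is exactly the subgroup $L(D):=\langle D\rangle\subseteq\bZ^3$, so $\M(D)$ is disconnected precisely when $L(D)\neq\bZ^3$; moreover each component has periodicity rank $r:=\rank L(D)$, and when $r=3$ the number of components equals the index $[\bZ^3:L(D)]$. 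Since dimension type, number of components and component topology are affine invariants (and, for lattice nets, topological isomorphism coincides with affine equivalence by Theorem \ref{t:kostousov}), I would organise the classification by $r\in\{1,2,3\}$.

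The cases $r=1,2$ are quick. If $r=1$ all directions are parallel, and a line through the origin meets $\{-1,0,1\}^3$ in a single primitive direction, so $D$ is a single edge and $\M(D)$ is affinely $\M_{\rm a}$: one class. If $r=2$, properness makes $L(D)$ saturated, so $\bZ^3/L(D)\cong\bZ$ and $\M(D)$ is an affinely determined stack of parallel copies of a planar lattice net; it then suffices to classify proper depth-$1$ planar bouquet nets up to affine equivalence. The four admissible planar directions are $(1,0),(0,1),(1,1),(1,-1)$, no proper net may contain both $(1,1)$ and $(1,-1)$, and a short check yields exactly the two affine classes ${\bf sql}$ (two edges) and ${\bf hxl}$ (three edges), giving $\M_{\rm aa}$ and $\M_{\rm aaf_z}$.

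The heart of the argument is $r=3$, and this is the step I expect to be the main obstacle; equivalently one could enumerate the finitely many standardised words, but the lattice route is cleaner. Here $L(D)\subsetneq\bZ^3$ has finite index $N\geq 2$ equal to the gcd of the $3\times3$ minors of the generator matrix; each minor is the determinant of a $\{-1,0,1\}$-matrix, hence lies in $\{0,\pm1,\pm2,\pm3,\pm4\}$, and as $D$ has rank $3$ some minor is nonzero, so $N$ divides a nonzero integer of modulus at most $4$. A prime $p\mid N$ forces every triple determinant to vanish mod $p$, i.e. all of $D$ to lie in a $2$-dimensional subspace of $(\bZ/p)^3$; the bound $|\det|\le4$ rules out $p\ge5$, and a direct inspection rules out $N=4,6$, leaving $N\in\{2,3\}$. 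For $p=2$ the $2$-dimensional subspaces of $(\bZ/2)^3$ fall under the cube rotation group into weight-types $\{1,1,2\},\{1,2,3\},\{2,2,2\}$, and only the type $\{2,2,2\}$, realised by $\ker(x_1+x_2+x_3)$, admits a rank-$3$ depth-$1$ set; its admissible directions are the six face diagonals, at most one per coordinate plane by properness, and a one-line computation (writing the chosen edges as $(0,1,s_1),(1,0,s_2),(1,s_3,0)$ gives determinant $s_2+s_1s_3$) shows that exactly the choices with positive sign-product have nonzero determinant $\pm2$, all identified with $\M_{\rm fff}$ by $180^\circ$ cube rotations. For $p=3$ the relevant subspaces are the $\ker\psi$ with $\psi$ having all coordinates nonzero; these form one cube-orbit, and $\ker(x_1+x_2+x_3\bmod 3)$ contains exactly the four directions $g_x,g_y,g_z,d_1$. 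Since $g_x,g_y,g_z$ are coplanar with $g_z=g_y-g_x$, a rank-$3$ subset must contain $d_1$ with at least two $g$'s, and the $3$-fold rotation about $(1,1,1)$ together with the rotation action on the body diagonals collapses these to the two classes $\M_{\rm ggd}$ (three edges, components ${\bf pcu}$, index $3$) and $\M_{\rm ggg}^d$ (four edges, components ${\bf hex}$, index $3$).

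Finally I would record distinctness and total. The dimension types $\{3;1\}$, $\{3;2\}$, $\{3;3\}$ separate the three rank-groups; within $\{3;2\}$ the component coordination ($4$ versus $6$) separates $\M_{\rm aa}$ from $\M_{\rm aaf_z}$; and within $\{3;3\}$ the component count ($2$ versus $3$) and the component coordination ($6$ versus $8$) separate $\M_{\rm fff}$, $\M_{\rm ggd}$, $\M_{\rm ggg}^d$. As all of these are affine invariants, the six model nets are pairwise inequivalent, giving exactly $1+2+3=6$ classes. The only genuinely laborious point is the finite case-check in the $r=3$ step, and it is precisely the combination $|\det|\le4$, the mod-$p$ subspace constraint, and the cube-rotation symmetry that keeps that check short.
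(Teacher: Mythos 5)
Your lattice-theoretic reformulation is a genuinely different, and in several respects cleaner, route than the paper's: the paper simply runs through the standardised edge words by motif size $m$, disposing of the non-coplanar $m=3$ case and the $m\geq 4$ case by direct ``examination''. Your identification of the component of the origin with $\langle D\rangle$, the use of the index $[\bZ^3:L(D)]$ to count components, and the bound $|\det|\leq 4$ for $\{-1,0,1\}$-matrices combined with reduction mod $p$ give a structural explanation of why only the dimension types $\{3;1\}$, $\{3;2\}$, $\{3;3\}$ with the stated component data can occur. The $r=1$, $r=2$ and $p=3$ analyses are sound, as is the final separation of the six classes by affine invariants.

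However, there is a genuine gap in the $p=2$ step. The claim that, among the three rotation-classes of $2$-dimensional subspaces of $(\bZ/2)^3$, only the weight-type $\{2,2,2\}$ subspace $\ker(x_1+x_2+x_3)$ admits a rank-$3$ depth-$1$ direction set is false. Take $V=\ker(x_2+x_3)\bmod 2$, of weight-type $\{1,2,3\}$: the depth-$1$ vectors reducing into $V$ are (mod sign) $a_x=(1,0,0)$, $f_x=(0,1,1)$, $g_x=(0,1,-1)$ and all four body diagonals, and the triple $D=\{(1,0,0),(0,1,-1),(1,1,1)\}=\{a_x,g_x,d_1\}$ has determinant $2$, satisfies every pairwise indivisibility and saturation constraint, and yields a proper net (the three supports are pairwise distinct, so no two edge classes can cross in their interiors). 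It is therefore a disconnected depth-$1$ lattice net with two $\mathbf{pcu}$ components that your enumeration never sees. The theorem survives because every rank-$3$ proper set lying in a $\{1,2,3\}$-type subspace has all its $3\times 3$ minors in $\{0,\pm 2\}$ and hence two $\mathbf{pcu}$ components, so it is topologically $\mathbf{pcu}$-c and affinely equivalent to $\M_{\rm fff}$ by Theorem \ref{t:kostousov}; but as written your case analysis is not exhaustive, and the count of $6$ is not yet established. (For what it is worth, the paper's own ``examination'' of the non-coplanar $m=3$ case lists only the words $fff$, $fgg$ and $ggd$ and appears to overlook the same configuration $a_xg_xd_1$.) To close the gap you need to admit the $\{1,2,3\}$-type subspaces in the $p=2$ case and check, as above, that all of their rank-$3$ proper sets land in the $\M_{\rm fff}$ class.
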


\begin{proof}
Let $\M$ be a model net of the type stated, with generating edge set $F_e$ with $|F_e|=m$. If $m=1$ (resp. $m=2$) then $\M$ is  affinely equivalent to $\M_{\rm a}$ (resp. $\M_{\rm aa}$).

Let $m=3$. Then the 3 edges of $F_e$ have translates, under the periodic structure, to 3 edges in $\M$ which are incident to a single node. Suppose first that the triple is coplanar. Then it determines a planar subnet $\M_1$ say, which is an embedding of {\bf hxl}. Also $\M$ is equal to the union of the translates of $\M_1$
of the form $\M_1+ nb$ where $b$ is a vector of integers and $n\in \bZ$. Thus $\M$ is affinely equivalent to $\M_{\rm aaf}$.

On the other hand, if the edges of $F_e$ are not coplanar then  $\M_1$ is an embedding of ${\bf pcu}$. Examination shows that this occurs with $\M$ disconnected, only for words $w$ of the forms (i) $fff$, giving 2 components, (ii) $fgg, gfg$ or $ ggf$, each of which is of type $fff$ after a translation and rotation, and (iii) $ggd$, gives which gives 3 components.

For $m\geq 4$ the model net  $\M_{\rm ggg}^d$ is the only net which is not connnected.
%A necessary (but insufficient) condition that $\M$ is disconnected is that the set of edges $F_e$ is disconnected. It follows that $1 \leq m \leq 4$ [TO DO] and so the proof is complete.
\end{proof}

\begin{center}
\begin{tabular}{|c|c|c|c|c|c|}
\hline
Model net   & Edge word  &Coordination & Net/multiplicity\\
\hline
\hline
 $\M_{\rm a}$&$a_x$ & 2& {line $\infty$}\\
\hline
 $\M_{\rm aa}$&$a_xa_y$ & 4 & {\bf sql $\infty$}\\
\hline
 $\M_{\rm aaf_z}$&$a_xa_yf_z$ & 6 & {\bf hxl} $\infty$\\
\hline
 $\M_{\rm fff}$&$f_xf_yf_z$ & 6 & {\bf pcu-c}\\
\hline
$\M_{\rm ggd}$&$g_xg_yd_1$ & 6 & {\bf pcu-c3}\\
\hline
 $\M_{\rm ggg}^d$&$g_xg_yg_zd_1$ & 8 & {\bf hex-c3}\\
\hline
\end{tabular}\\
\end{center}\medskip

\begin{center}
Table 2. Disconnected nets with a single vertex, depth 1, labelled quotient graph.
\end{center}
\bigskip

\subsection{Connected lattice nets with depth 1}
Let $\fN_1$ denote the family of proper linear 3-periodic nets with a periodic structure basis providing a depth 1 labelled quotient graph with a single vertex. 
We now consider the subfamily $\fN_1^c$ of connected nets $\N$ in $\fN_1$. 
These nets also gives building block nets for embedded nets with a double vertex quotient graph, and for multicomponent nets.  
In Theorem \ref{t:1vertexQG} and Corollary \ref{t:single_v_isotopy} we   classify the nets   up to oriented affine isomorphism and up to  periodic isotopy respectively, there being 19 classes for each equivalence relation. 
%This shows that there is no self-entanglement variation within this class of nets..... there are nevertheless a variety of ways in which edges penetrate minimal cycles of edges. 
%We give an explicit geometric proof of Theorem \ref{t:1vertexQG} which reveals hierarchical relationships and structural features.
As in the previous section it will suffice to consider model nets. 
{Moreover} each model net $\M$ in $\fN_1^c$  is determined by an ordered word for the edges of a repeating unit $F_e$  {and these edges $[a,b]$ are subsets of the unit cell $[0,1)^3$ except for one of their endpoints}.
In view of connectivity and noncrossing conditions the quotient graph $QG(\M)$ is a bouquet graph with a single vertex and loop edges of multiplicity $m=3, 4, 5, 6$ or $7$ {(as implied by Lemma \ref{l:7and8lemma})}. 
%Simple counting shows that there are 976 possible choices for $F_e$.\footnote{To correct/check ...  argument after the references.}

To distinguish these model nets we make use of some {new} readily computable {local} features { which can be read off from the repeating unit and} which provide some {readily computable} structural invariants under affine isomorphism.

\begin{defn}
The \emph{hxl-multiplicity} $\hxl(\N)$ of an embedded net $\N$ is the number of translation classes of
planar 2-periodic subnets of $\N$ which are completely triangulated. 
\end{defn}

For the model nets $\M$ in $\fN_1^c$ this multiplicity is equal to the number of triples of edges $[a,b]$ in $F_e$ whose edge vectors, $b-a,$ form a coplanar triple.
It may  also be computed from the point symbol (PS) as the number of 3-cycles divided by 6. Thus the point symbol of {\bf fcu} is $3^{24}.4^36.5^6$ and so $\hxl(\N_{\rm fcu})=4.$

%Similarly, the \emph{sql-multiplicity} $\sql(\M)$, is the number of translation classes of planar 2-periodic subnets of $\M$ which admit no triangulations.NOT OF INTEREST GIVEN Hxl ?

%{\color{red}to delete: "A connected embedded net, without edge crossings, is said to be \emph{self-catenated} if there are minimal cycles of edges which are catenated in the usual (Hopf link) sense of being disjoint and linked together." (since "self-catenated" not used again and also we are modifying Defn 8.2): } 
%For a model net $\M$ in $\N_1$ to be self-catenated it is necessary, although not sufficient, that there is a diagonal edge in  $F_e$.

%{\color{cyan}SOME NEW TEXT LEADING TO THE NEW DIAGRAMS;}

The next definition might be viewed as a strong form of local catenation.

\begin{defn}  (i) An edge of an embedded net is \emph{$3^2$-penetrating} if there exist 2 disjoint {parallel} edge-cycles of length 3 and an edge $[a,b]$ which  passes through them in the sense  that the open line segment $(a,b)$ intersects the convex hull of each cycle.

(ii) An edge of an embedded net \emph{$4^2$-penetrating} if it passes through 2 disjoint {parallel} untriangulated parallelograms.
\end{defn}
\begin{center}
\begin{figure}[ht]
\centering
\includegraphics[width=12cm]{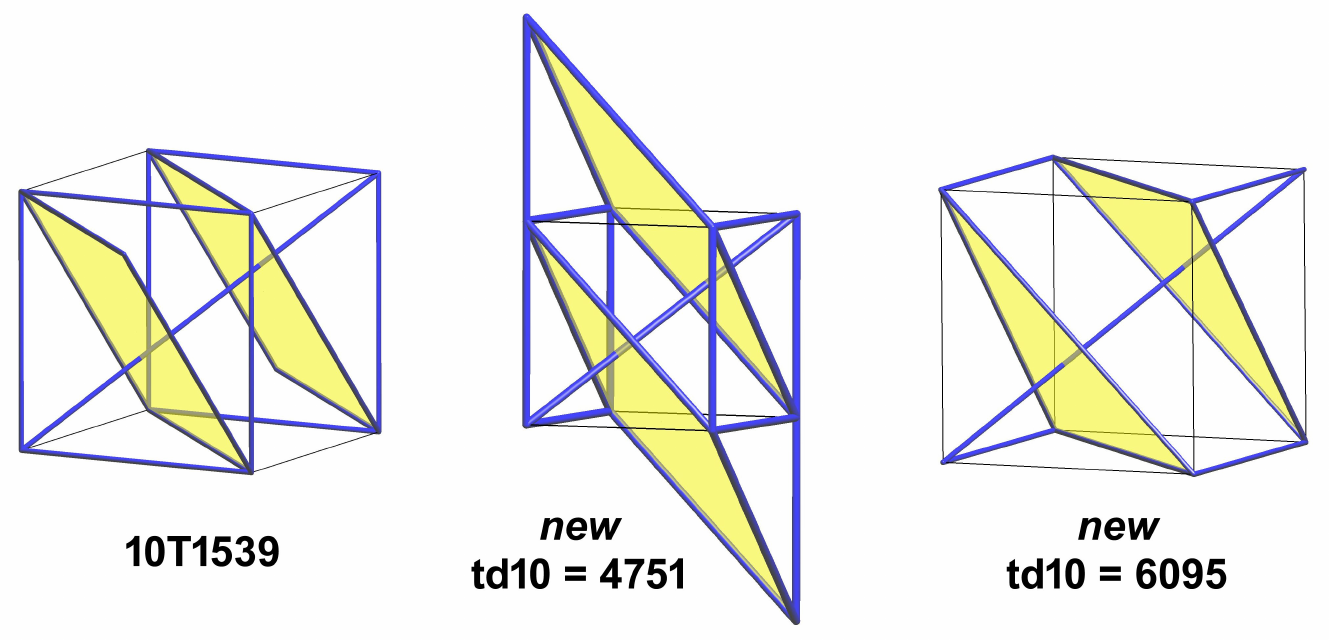}
\caption{Penetrating edges of type $3^2$ and $4^2$ as observed for three 10-coordinated nets from Table 3.}
\label{f:penetrating}
\end{figure}
\end{center}
One can check for example that for the  model net $\M$ in $\fN_1^c$ with a defining ordered word $w$ 
%{\color{red} can delete "containing $d_1$"}, 
there exists a $3^2$-penetrating edge 
if and only if $w$ contains the subword $g_xg_yg_zd_1$. Also there exists a $4^2$-penetrating edge if and only if $w$ contains $d_1$ and precisely 2 of the 3 letters $g_x, g_y, g_z$. See Figure \ref{f:penetrating}.

We similarly define when an edge is $3^1$-penetrating or $4^1$-penetrating. In fact there are no depth 1 lattice nets with a $3^1$-penetrating edge. In general let us say that $\N$ has \emph{property $3^k$}  if there are $3^k$-penetrating edges but no $3^{k+1}$ penetrating edges. We also define \emph{property $4^k$} similarly.
We indicate these properties in column 5 of Table 3.

%and hence define an associated affine invariant property. In Table 3 the former case does not occur for these nets whereas there are 3 nets with $4^1$ edges and no  $4^2$ edges. 

%{\color{red}To  remove this sentence to avoid confusion: Evidently such properties are closely related to the existence of Hopf links of pairs of minimal edge cycles in the embedded net {\color{blue}as detected by ToposPro}.}

\subsection{Classification of depth 1 lattice nets} We now define 19 model nets $\M$ in $\fN_1^c$ with standard orthonormal basis as a depth 1 periodicity basis and where in each case the node set is the subset $\bZ^3$ of $\bR^3$. We do this, as in the previous section, by specifying a defining edge word, as listed in column 2 of Table 3. 
The 9 nets without the strong edge penetration property {(of type $3^2$ or $4^2$)} appear in the RCSR %as in Figure \ref{f:rcsr9nets}, 
whereas the other 10 nets do not. This reflects the fact that the strongly penetrated nets can be viewed as exotic forms in reticular chemistry. Indeed, there are 3 new topologies which have not been observed either in the RCSR and or the ToposPro net databases.

%The final column of the table indicates the RSCR name, in bold, and the TOPOS name, such as 8T17, for the structure graph of the model net. 
%Also  3 nets are indicated that do not appear in these listings.
\bigskip

\begin{center}
\begin{tabular}{|c|c|c|c|c|c|c|c|}
\hline
Model net   & Edge word & $\hxl(\M)$ &Coordination & Penetration & Topology&td10&$\pi$(-)\\
\hline
\hline
 $\M_{\rm pcu}$&$a_xa_ya_z$ &0& 6&0 & {\bf pcu}&1561&48\\
\hline
\hline
 $\M_{\rm pcu}^{d}$  &$a_xa_ya_zd_1$&0& 8  & 0  & {\bf bcu}&2331&48\\
\hline
 $\M_{\rm pcu}^{f}$ &$a_xa_ya_zf_x$&1& 8 & 0& {\bf hex}&2331&24\\
\hline
%$\M_{\rm pcu}^{\delta},
%$\M_{\rm ilc}$&$f_xf_yf_zd_1$
$\M_{aad}^g$&$a_xa_yg_xd_1$ 
&0& 8   & $4^1$ & {\bf ilc}&3321&12\\
\hline
$\M_{ad}^{gg}$
%**$\M_{\rm ggf}^{d}$
& $a_xg_xg_yd_1$
%$g_xg_yf_zd_1$ 
&0& 8  & $4^2$ & 8T17 &4497&4
%Type 8
\\
\hline
$\M_{ad}^{g_yg_z}$& $a_xg_yg_zd_1$ &0& 8  & $4^2$& 8T21 &4041&12
%Type 10
\\
\hline
\hline
$\M_{\rm pcu}^{ff}$&$a_xa_ya_zf_xf_y$ &2& 10 &0 & {\bf bct} &3101&16\\%Type 17
\hline
$\M_{pcu}^{gd}$
%$\M_{\rm ilc}^a$ 
&$a_xa_ya_zg_xd_1$ &1& 10 &$4^1$ & {\bf ile}&3761&8
\\
\hline
%$\M_{ggf}^{ad}$&$a_xg_xg_yf_zd_1$ 
$\M_{aad}^{gg}$&$a_xa_yg_xg_yd_1$ &0& 10 & $4^2$& 10T1539 &4991&4
%Type 25
\\
\hline
%OLD EQUIV $\M_{fgg}^{ad}$& $a_xf_xg_yg_zd_1$ 
$\M_{aad}^{g_xg_z}$& $a_xa_yg_xg_zd_1$
& 1  & 10 & $4^2$ & \emph{new}  &4751&4
%Type 26
 \\
\hline
$\M_{ad}^{ggg}$& $a_xg_xg_yg_zd_1$ &1& 10 & $3^2$& \emph{new} &6095&4
%Type 31
\\
\hline
\hline
$\M_{\rm pcu}^{fff}$
%[$\M_{\rm ilc}^{aa}$]
&$a_xa_ya_zf_xf_yf_z$ &3& 12& $4^1$ & {\bf ild} & 4201&12
%Type 41
\\
\hline
$\M_{\rm pcu}^{ggg}$
%[$\M_{ffg}^{aad}$]
& $a_xa_ya_zg_xg_yg_z$ &4& 12 &0 & {\bf fcu} &3871&48
%Type 42
\\
\hline
$\M_{\rm pcu}^{ggd}$
%[$\M_{fgg}^{aad}$?]
& $a_xa_ya_zg_xg_yd_1$ &2& 12& $4^2$& 12T1305 &5191&4
%Type 57
\\
\hline
$\M_{aad}^{ggf}$
&$a_xa_yg_xg_yf_zd_1$ &1& 12& $4^2$ & 12T1657 &5431&12
%Type 56
\\
\hline
$\M_{aad}^{ggg}$& $a_xa_yg_xg_yg_zd_1$&2& 12& $3^2, 4^1$& \emph{new} &6421&4
%Type 63
\\
\hline
\hline
$\M_{\rm pcu}^{fffd}$&$a_xa_ya_zf_xf_yf_zd_1$ &6& 14& $4^1$ & {\bf bcu-x}&4641&48\\
\hline
$\M_{\rm pcu}^{ggfd}$& $a_xa_ya_zg_xg_yf_zd_1$ &4& 14& $4^2$& 14T199 & 5631&12
%Type 77
\\
\hline
$\M_{\rm pcu}^{gggd}$& $a_xa_ya_zg_xg_yg_zd_1$ &4& 14& $3^2, 4^1$& 14T957 &6621&12
%Type 80
\\
\hline
\end{tabular}\\ 
\end{center}\medskip

\begin{center}
Table 3. Connected nets with a single vertex, depth 1, labelled quotient graph.
\end{center}
\bigskip

\begin{center}
\begin{figure}[ht]
\centering
\includegraphics[width=3.5cm]{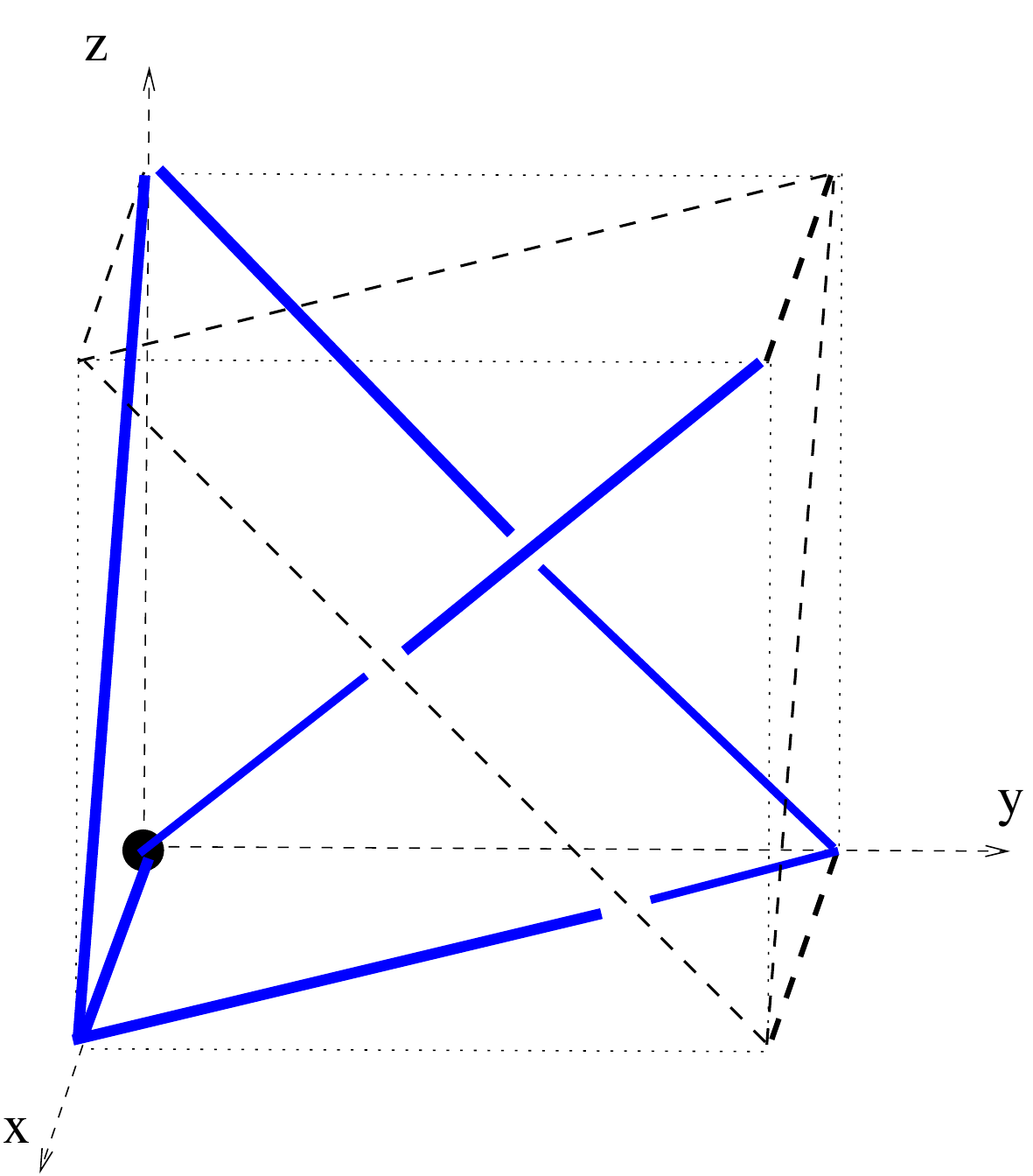}\quad 
\includegraphics[width=3.5cm]{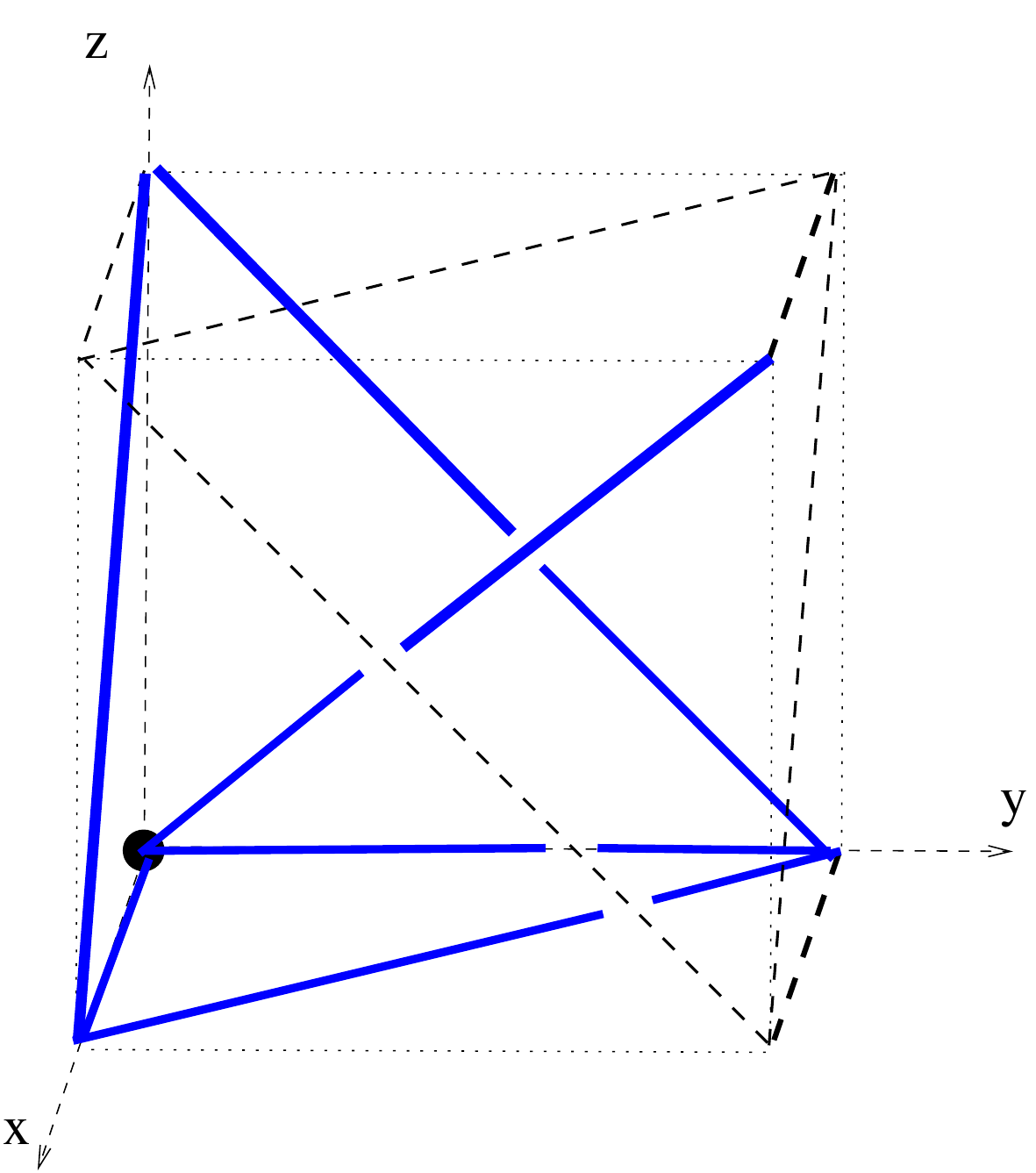}
\quad 
\includegraphics[width=3.5cm]{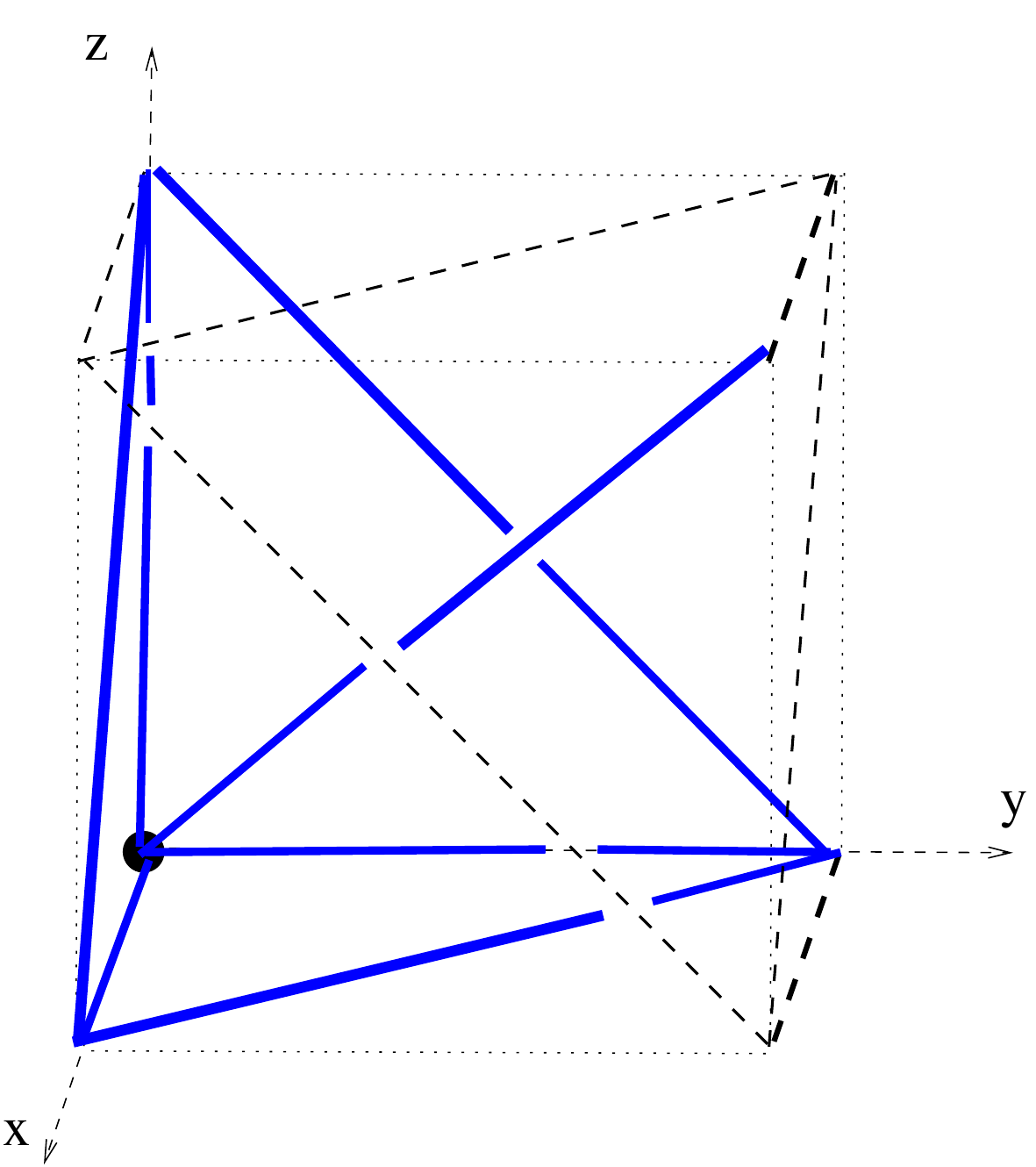}
\caption{Motifs for the model nets $\M_{ad}^{ggg},$ $\M_{aad}^{ggg}$ and $\M_{\rm pcu}^{gggd}$ (14T957)  which have a $3^2$-penetrating edge.}
\label{f:3nets}
\end{figure}
\end{center}

We also record in the final column the cardinality of the point group of the maximal symmetry net with the given topology, which we may denote by  $\pi({\bf pcu})$ etc.
\medskip

Let us define an \emph{elementary affine transformation} of $\bR^3$ to be a rotation, translation, or  a linear map whose representing matrix has entries 1 on the main diagonal and a single nonzero nondiagonal entry equal to 1 or $-1$. These maps, such as
$(x,y,z)\to (x-z,y,z)$, map model nets to model nets and play a useful role in case-by-case analysis.

\begin{rem}\label{r:ilcremark} We note that the countable graph {\bf ilc}, represented by the model net $\M_{aad}^g$ can be represented in other ways. The model net $\M_{fff}^d$ gives one such alternative.
% for  has minimal edge cycles of length 5 and so its structure graph is not isomorphic to $G(\M_{\rm pcu}^d)$.
%Nonisomorphism also follows from the topological density data.
%We remark that $\M_{aag}^d$ is equivalent by elementary transformations to $\M_{fff}^d$.{\color{blue}do?} 
The topology is also made apparent by its  equivalence, by elementary transformations, with the net  obtained  from the {\bf pcu} model net by the addition of integer translates of the long diagonal edges with edge vector $(1,2,1)$. However, in this case the standard basis is a periodic structure basis of depth 2.
\end{rem}

\begin{thm}\label{t:1vertexQG}
There are 19  oriented affine equivalence classes of connected lattice nets with depth 1.
%connected linear 3-periodic nets $\N$ in $\bR^3$ with adjacency depth 1 and a single vertex quotient graph.
\end{thm}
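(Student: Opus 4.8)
The plan is to classify by a direct enumeration over the number $m$ of loop edges, using the standardisation already established for single-vertex model nets together with the properness constraints of Proposition \ref{p:divisible} to cut the list down, and the local invariants of Table 3 to separate the survivors. First I would reduce to model nets: by the model-net construction of Section \ref{ss:isomorphicnets} together with the standard-word reduction described at the start of Section \ref{s:latticenets}, every connected depth-1 lattice net is orientedly affinely equivalent to a model net $\M$ on the integer lattice $\bZ^3$ whose edges are given by a standard ordered word $w=w_1w_2d_1$ or $w_1w_2$, with $w_1\in\{\emptyset,a_x,a_xa_y,a_xa_ya_z\}$ and $w_2$ a face subword containing at most one of each complementary pair $\{f_i,g_i\}$. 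By Remark \ref{r:mirror2vertex} oriented and ordinary affine equivalence coincide for such nets, so chirality need not be tracked. The integer $m=|w|$ satisfies $3\le m\le 7$: the lower bound because 3-periodicity forces the edge vectors to span $\bR^3$, the upper bound by Lemma \ref{l:7and8lemma}.

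Next, for each fixed $m$ I would enumerate the standard words and prune. Three conditions disqualify a word: (a) its edge vectors fail to span $\bR^3$, so $\N$ is not 3-periodic; (b) its edge vectors span $\bR^3$ but generate a proper finite-index subgroup of $\bZ^3$, so $\N$ is disconnected — these are exactly the nets already isolated in Table 2, such as $f_xf_yf_z$ and $g_xg_yd_1$, whose defining matrices have determinant $\pm2,\pm3$; and (c) some pair of labels violates the maximal-subgroup/indivisibility conditions recorded after Proposition \ref{p:divisible}, making $\N$ improper. Properness of each surviving candidate is then checked directly, which at depth $1$ reduces to the pairwise midpoint conditions. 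Finally I would apply oriented elementary affine transformations — coordinate-permuting rotations and unit shears such as $(x,y,z)\mapsto(x-z,y,z)$ — to merge words defining the same net, exactly as in Remark \ref{r:ilcremark}. This reduction yields precisely the representatives of Table 3: one net for $m=3$ (namely $\M_{\rm pcu}$, where three edges form an integral basis), and then $5,5,5,3$ nets for $m=4,5,6,7$, totalling $19$.

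The inequivalence of the $19$ representatives is the second half. Nets with different $m$ have different coordination $2m$ and are inequivalent. Within a fixed $m$ I would separate using the affine invariants collected in Table 3: the hxl-multiplicity $\hxl(\M)$, which counts coplanar edge-vector triples and is invariant under affine maps; the penetration type ($3^k$ or $4^k$); and, when these agree, the coordination sequence or its cumulative sum $\mathrm{td}10$. For instance at $m=4$ the value $\hxl=1$ singles out {\bf hex}, the penetration types $0,4^1,4^2,4^2$ separate {\bf bcu}, {\bf ilc} and the pair 8T17/8T21, and the latter two are finally distinguished by their partial coordination sequences $8,32,88$ and $8,32,80$ (equivalently $\mathrm{td}10=4497$ versus $4041$). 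Since the tuple (coordination, $\hxl$, penetration, $\mathrm{td}10$) is distinct across all $19$ entries, no two are orientedly affinely equivalent; this keeps the argument self-contained, independent of Kostousov's Theorem \ref{t:kostousov}.

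The main obstacle is the bookkeeping of the pruning-and-merging step for $m=5$ and $m=6$, where the number of standard words is largest. One must be certain that the chain of elementary affine transformations is exhaustive, so that no two surviving representatives are secretly equivalent (undercounting), while simultaneously confirming that every proper connected candidate really does reduce to a listed net (no case overlooked). The distinctness of the invariant tuples in the final step controls the first risk — it guarantees the survivors are genuinely different — but verifying that each pruned or merged word behaves as claimed, in particular deciding which combinations of the $g_i$ with $d_1$ yield proper rather than self-crossing nets, is the delicate, calculation-heavy part.
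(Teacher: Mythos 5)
Your proposal follows essentially the same route as the paper's own proof (Appendix B): a case-by-case enumeration over the loop multiplicity $m=3,\dots,7$ of standardised edge words, merging via elementary affine transformations such as $(x,y,z)\mapsto(x-z,y,z)$, invoking Remark \ref{r:mirror2vertex} to dispose of chirality, and separating the survivors by the invariant tuple of coordination, $\hxl$-multiplicity, penetration type and td10 (the last needed only for the pair 8T17/8T21), with the per-$m$ counts $1,5,5,5,3$ matching Table 3. The only cosmetic difference is your explicit appeal to Proposition \ref{p:divisible} as a pruning filter where the paper works directly with the noncrossing condition; the substance is the same.
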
  

%We give an elementary case by case proof of this theorem in Appendix B, to determine equivalences between motifs, some of which are given in Figure \ref{f:3nets}. %which can also be viewed as establishing the model net symbolism of Table 4. 
%Additionally, to verify our results, enumeration of lattice nets was performed using GAP.

We have obtained this classification by means of a case-by-case proof as well as a verification by an enumeration of lattice nets using GAP. The following interesting special case, with two new nets,  illustrates the general proof method. (See also the Extended Supplementary Information.)

\begin{proof}[Determination of the 10-coordinated connected lattice nets of depth 1.]  Suppose first that a model net $\M$ in this case has 3 axial edges and 2 face edges. Then it is straightforward to see that it is equivalent by elementary affine transformations to the model net $\M_{\rm pcu}^{ff}$, for {\bf bct}. Also, any model net of type $aaafd$ is similarly equivalent to this type. On the other hand, a type $aaagd$ model net has $\hxl$-multiplicity equal to $1$, rather than $2$, and so  represents a new equivalence class. Its topology is {\bf ile}.

Consider next the model nets with 2 axial edges and no diagonal edges. These are equivalent by elementary affine transformations to a model net with 3 axial edges and so they are equivalent to the model nets in Table 3 for {\bf bct} and {\bf ile}.
The same is true for the 9 nets of type $aawd$ where $w$ is a word in 2 facial edges which is not of type $gg$.

Thus, in the case of 2 axial edges it remains to consider the types $a_xa_ywd_1$ with $w= g_xg_y, g_xg_z$ and $g_yg_z$. Each of these has a penetrating edge of type $4^2$. The first two are model nets in the list and they give new and distinct affine equivalence classes in view of their penetration type and differing $\hxl(\N)$ count. The third net, for the word $a_xa_yg_yg_zd_1$ is a mirror image of the first net and is orientedly affinely equivalent to it, by Remark \ref{r:mirror2vertex} for example.

It remains to consider the case of 1 axial edge, $a_x$, together with $d_1$ and 3 facial edges. If there are 2 edges of type $f_x, f_y$ or $f_z$ then there is an elementary equivalence with a model net with 2 axial edges. The same applies if there is a single such edge. (For an explicit example consider $a_xf_xg_yg_zd_1$ and check that the image of this net under the transformation $(x,y,z)\to (x,y-z,z)$ gives a depth 1 net with 2 axial edges.)
%The implied transformation of motifs is indicated in Figure \ref{f:afggdFIRST}.

%\begin{center}
%\begin{figure}[ht]
%\centering
%\includegraphics[width=3.5cm]{gk1_5_afxgygzd.eps}\quad \quad
%\includegraphics[width=3.5cm]{gk1_5_ODDone.eps}
%\caption{Change of motif under $(x,y,z)\to (x,y-z,z)$.}
%\label{f:afggdFIRST}
%\end{figure}
%\end{center}

Finally the model net for $a_xg_xg_yg_zd_1$ appears in the listing and gives a new class with penetration type $3^2$.
\end{proof}

\medskip
%FIGURE SUPPRESSED
\begin{center}
\begin{figure}[ht]
\centering
\includegraphics[width=11cm]
{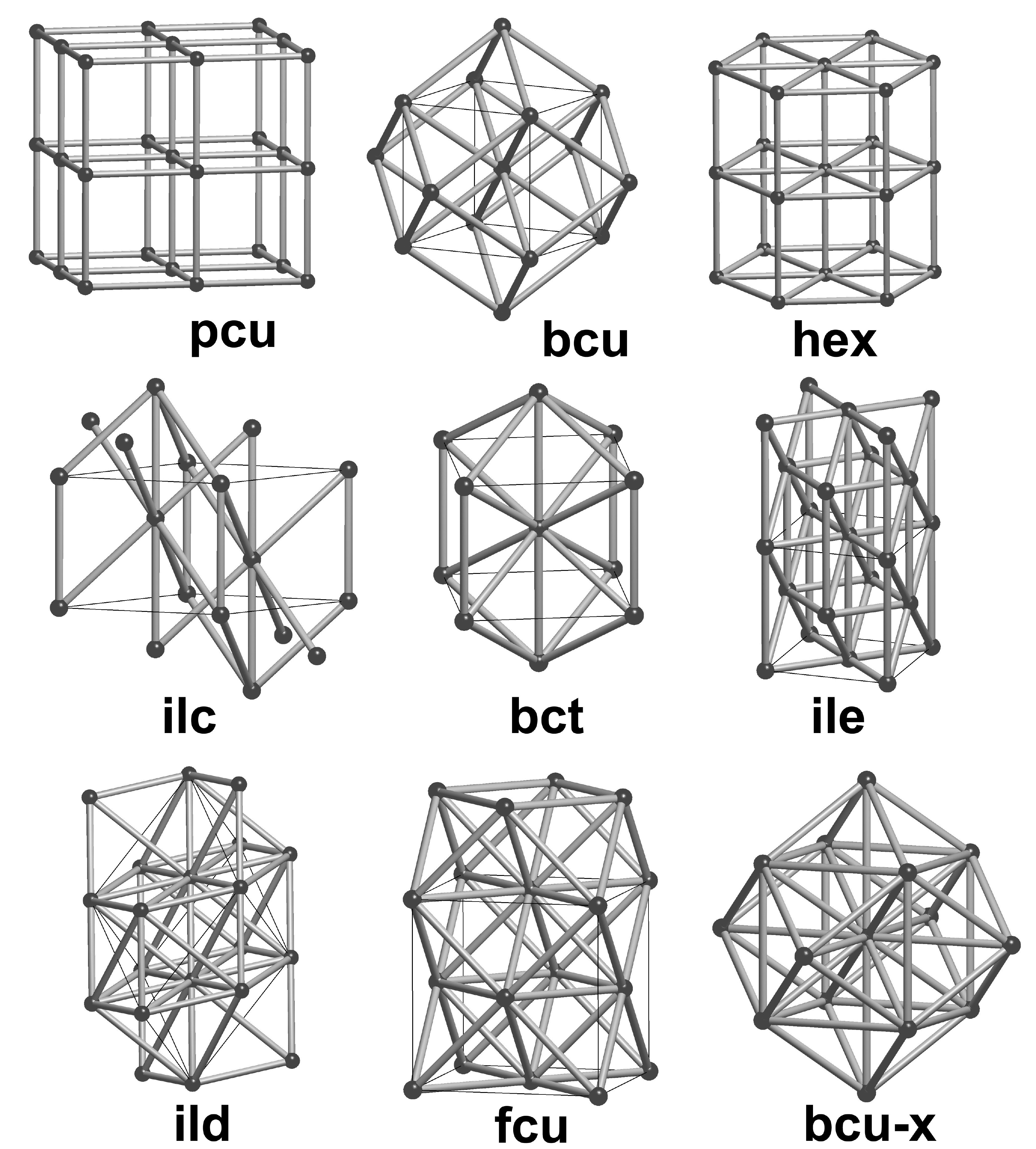}
\caption{Maximal symmetry embeddings of the 9 model nets of $\fN_1^c$ which do not have the $3^2$- or $4^2$-penetration property. }
\label{f:rcsr9nets}
\end{figure}
\end{center}
\medskip

\begin{cor}\label{t:single_v_isotopy}
There are 19 periodic isotopy classes of connected linear 3-periodic nets in $\bR^3$ with adjacency depth 1 and a single vertex quotient graph.
\end{cor}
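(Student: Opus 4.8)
The plan is to deduce the corollary from Theorem \ref{t:1vertexQG} by matching two opposite bounds. For the upper bound I would invoke observation (4) following Definition \ref{d:deformationequivalentnets}: since $GL^+(3,\bR)$ is path-connected, orientedly affinely equivalent nets are periodically isotopic. By Theorem \ref{t:1vertexQG} every connected depth-1 lattice net is orientedly affinely equivalent to one of the 19 model nets listed in Table 3, and is therefore periodically isotopic to it. Hence there are at most 19 periodic isotopy classes.

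For the lower bound I would use the fact that the \emph{topology} of $\N$, meaning the isomorphism type of the structure graph $G(\N)$ as a countable graph, is a periodic isotopy invariant. This is immediate from Definition \ref{d:deformationequivalentnets}, since along an isotopy the bijections $f_t$ carry nodes to nodes and edges to edges, so the underlying combinatorial graph is unchanged throughout the deformation. Consequently every graph-theoretic invariant of $G(\N)$ is constant on periodic isotopy classes; in particular this holds for the coordination (valency), for the coordination sequence and its cumulative sum td10, and for the order of the point group of the maximal symmetry embedding, which is determined by $\Aut(G(\N))$.

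It therefore suffices to check that the 19 model nets of Table 3 realise 19 pairwise non-isomorphic topologies; granting this, the two bounds coincide and the count is exactly 19. I would carry out the verification block by block, first using the coordination number to split the 19 nets into the groups of sizes $1,5,5,5,3$ appearing in Table 3, and then reading off td10 within each block. Inspection of the table shows that td10 separates every pair within a block except one coincidence: {\bf bcu} and {\bf hex} share the value $\mathrm{td}10=2331$ in the coordination-$8$ block. These two are separated instead by the order of their point groups ($48$ versus $24$), equivalently by the fact that they are distinct $8$-coordinated nets. (The coordination sequences $8,32,88$ and $8,32,80$ recorded in Section \ref{s:terminology} give an independent separation of {\bf 8T17} and {\bf 8T21}, although td10 already distinguishes them.)

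The only genuine work is this last bookkeeping step, and in particular confirming that the three entries marked \emph{new}, taken together with the named RCSR and ToposPro nets, are pairwise non-isomorphic as abstract graphs. I expect td10 to resolve all pairs except the single {\bf bcu}/{\bf hex} coincidence, with the penetration types and hxl-multiplicities of Table 3 furnishing further independent confirmation where desired. Since distinct topologies can never be periodically isotopic, the 19 distinct topologies force at least 19 periodic isotopy classes; combined with the upper bound of 19 from Theorem \ref{t:1vertexQG} and observation (4), this yields exactly 19, completing the proof.
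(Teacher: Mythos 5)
Your proposal is correct and follows essentially the same route as the paper: the upper bound comes from Theorem \ref{t:1vertexQG} together with the observation that oriented affine equivalence implies periodic isotopy, and the lower bound from the fact that the structure graph is a periodic isotopy invariant and the 19 model nets have pairwise non-isomorphic topologies. Your extra bookkeeping is a genuine improvement in one detail: the paper's remark that td10 alone distinguishes the 19 topologies overlooks the coincidence ${\rm td}10({\bf bcu})={\rm td}10({\bf hex})=2331$, which you correctly resolve by a further invariant (the paper's own cleaner escape is Theorem \ref{t:kostousov}, which shows the 19 affine classes automatically have distinct topologies).
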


\begin{proof}
If the connected linear 3-periodic nets $\N_1, \N_2$ are orientedly affinely equivalent then, as previously observed, 
they are periodically isotopic. Thus there are at most 19 periodic isotopy classes. On the other hand periodically isotopic embedded nets have structure graphs which are isomorphic as countable graphs. Since the 19 model nets have nonisomorphic structure graphs the proof is complete.
\end{proof}

Theorem \ref{t:1vertexQG}, together with the linear implementation of graph isomorphisms indicated in Theorem \ref{t:kostousov},  implies that the structure graphs of the 19 model nets must be nonisomorphic as graphs. 
This also follows on examining the td10 topological density count. We remark that
%{comment $\M_{\rm ad}^{gg}$ and $\M_{\rm ad}^{g_yg_z}$ 
 Table 3 , without the final td10 column, almost distinguishes 19 affine equivalence classes (and hence, by Theorem \ref{t:kostousov}, the structure graphs) since we have only appealed to topology density to distinguish the curious pair $\M_{\rm ad}^{gg}, \M_{\rm ad}^{g_yg_z}$.

%{\color{red}PERHAPS COMMENT ON MATRIX CHECK ALGORITHM (SUGGESTED BY IGOR) HERE. Perhaps details later}

%\newpage

\section{Double lattice nets and further directions}
\label{s:furtherdirections}
%  .{\color{blue}New title, some editing}
%, preamble, and much reduced text replacing old Sections 9,10.} 
We give a brief indication of research directions in the determination of periodic isotopy classes and periodic isotopes for  embedded nets with a double vertex quotient graph as well as research directions in rigidity and flexibility.

\subsection{Double lattice nets}\label{ss:doublelattice}
%\section{Classifying double lattice nets}
For convenience we define a \emph{double lattice net} to be an embedded periodic net $\N$ in $\bR^3$ whose set of nodes is the
union of 2 translationally equivalent rank 3 lattices and we let $\fN_2$ be the family of proper double lattice nets with adjacency depth 1.

%Since we have not required the periodicity basis for the component lattices to be a primitive one for the embedded net we have the inclusion $\fN_1 \subseteq \fN_2$. Thus every net in $\fN_1$ can be considered as a special embedding of some structure graph $G(\N)$ with $\N $ in $\fN_2$.

%\begin{lem}\label{l:fN1fN2}
%$\fN_1 \subseteq \fN_2$.
%\end{lem}

%\begin{proof}
%Let $\N \in \fN_1$ with a  periodic structure basis $\ul{a}=\{a_1, a_2, %a_3\}$ of depth $1$ and let $\ul{a}'=\{2a_1, a_2, a_3\}$. Then the %labelled quotient graph LQG$(\N,\ul{a'})$ has 2 vertices.
%\end{proof}

The double vertex quotient graph $\N$ in $\fN_2$ consists of 2 bouquet graphs and a number of nonloop edges. We denote these graphs  as $H(m_1,m_2,m_3)$ where $m_1$ and $m_3$ are the loop multiplicites, with $m_1\geq m_3\geq 0$, and $m_2$ is the multiplicity of the connecting edges. From Lemma \ref{l:7and8lemma} we have the  necessary conditions  $0\leq m_1\leq 7$ and $0\leq m_2\leq 8$ as well as  $m_3\geq 1$ if $m_2=1$, since, from the definition of a linear 3-periodic net, there can be no nodes of degree 1. If $\N$ is a net in $\fN_2^c$, the subfamily of connected nets, then we also have the additional condition $m_2 \geq 1$.

Each net $\N \in \fN_2$  admits a unique  3-fold decomposition
$
\N = \N_1 \cup \N_2 \cup \N_3
$
where $\N_1$ and $\N_3$ are the disjoint 3-periodic subnets associated with the two bouquet subgraphs and where $\N_2$ is the  net, with the same node set as $\N$, associated with the subgraph with non-loop edges. The subnets $\N_1, \N_3$ may have no edges if one or both vertices has  no loop edges.
When loops are present on both vertices then the nets $\N_1, \N_3$ are \emph{bouquet nets},  and are of  three possible dimension types, namely  $\{3;1\}, \{3;2\}$ or $\{3;3\}$. As we have seen earlier, for type $\{3;1\}$ there is 1 affine isomorphism class of embedded nets, for type
$\{3;2\}$ there are 2 such classes and for type $\{3;3\}$ there are 3 classes for disconnected nets and 19 classes for connected nets.

Thus in the 3-fold decomposition of a net $\N$ in $\fN_2$, each of the subnets $\N_1,$ $\N_3$ is either devoid of edges or is \emph{separately} orientedly affinely equivalent to one of the 25 model nets for $\fN_1$. The relative position (parallel or inclined, for example) of these component nets allows for considerable diversity for the entangled net $\N_1\cup \N_3$. In particular, while $\N$ is affinely equivalent to a \emph{general} model net $\M_1\cup \M_2\cup \M_3$, with standard periodic structure basis $\ul{b}$, in general we can only additionally arrange that one of the subnets $\M_1, \M_3$ is equal to a translate of one of the specific 25 model nets in Tables 2 and 3.

Evidently there is a considerably diversity for the periodic isotopy classes of embedded nets with depth 1 and a double vertex quotient graph. We now show that there is even a marked increase in the number of topologies for such nets. 
%by determining topologies in the subcase of the loopless graphs $H(0,m,0)$.

%\subsection{Bipartite double lattice nets} 
%We have seen that the nets of $\fN_2$ can be viewed as a join of 3 subnets on the same node set, two of which are disjoint lattice nets corresponding to bouquet quotient graphs and the third corresponding to a bipartite quotient graph with 2 vertices and no loop edges. The latter nets, which we refer to as \emph{bipartite double lattice nets}, are of interest in their own right and we now consider their various topologies, 3-periodic nets and periodic isotopy classes.

%Let $\fN_2(m_1, m_2, m_3)$ be the family of nets $\N$ in $\fN_2$ which have a periodicity basis with a depth 1 bipartite quotient graph $H(m_1, m_2, m_3)$. Also, 

For $1 \leq m \leq 8$ define $\fN_2^*(0,m,0)$  
%and $\fN_2^c(0,m_2,0)$) 
to be the family of nets $\N$ in $\fN_2$ which have a periodicity basis with a depth 1 bipartite quotient graph $H(0,m,0)$ with an edge carrying the label $(0,0,0)$. The label condition here ensures the natural condition that $\N$ has an edge between the pair of representative joints in the semi-open unit cell for the periodicity basis.
In fact this convention, which we call the \emph{unit cell property}, is the natural convention used by
Chung, Hahn and Klee \cite{chu-hah-kle} in their schemes for the enumeration of periodic nets. 

 For $m=1,2,3$ the nets of this type are not connected. For $m=4$ it is well known that there is a unique \emph{connected} topology $G(\N)$ for the nets in $\fN_2^*(0,m,0)$, namely the diamond net {\bf dia}. For higher values of $m$ we are able to determine the topologies through a computational analysis based in part on the indivisibility criterion Proposition \ref{p:divisible}.
See also the extended supplementary information.
%which we now describe.

%The topologies for the connected nets in $\fN_2^*(0,m,0)$ are determined by a 3-step algorithm: 
%\bigskip

%(i) For  valency $m$, with $5\leq m\leq 8$, enumerate all possible combinations of $(m-1)$ non-zero labels from the list of 26 nonzero depth 1 labels. 

%(ii) Discard the disconnected nets and the nets which fail to pass the non-crossing condition.  {The divisibility criterion of Proposition \ref{p:divisible} plays a key role in this step.}

%(iii) Using the Systre program  ensure that isomorphically repeated nets are discarded. Since the nets are of crystallographic type, albeit with possible vertex collisions in barycentric placements, the Systre program is available  to determine isomorphic pairs.

 \bigskip

%{\color{red}perhaps add something on the linear algebra MATRIX test for the bouquet nets }

\begin{prop}\label{p:117topologies} There are 117 nonisomorphic topologies for bipartite double lattice nets 
with the unit cell property, which are connected and have adjacency depth 1. Moreover, the number of $m$-coordinated topologies, for  $m=4, 5,6,7$ and 8, are, respectively, 1, 11, 31, 40 and 34. 
\end{prop}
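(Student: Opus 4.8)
The plan is to convert the geometric classification into a finite combinatorial enumeration of edge-label sets, perform the enumeration together with its symmetry reduction, and then certify that distinct combinatorial classes really do give distinct topologies.

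First I would encode the nets. A connected proper bipartite double lattice net $\N$ of depth $1$ with quotient graph $H(0,m,0)$ and the unit cell property is, after normalising to a model net with the standard basis $\ul{b}$ and a node of one class at the origin, determined at the topological level by the set $K=\{k_1,\dots,k_m\}\subseteq\{-1,0,1\}^3$ of edge labels, where the unit cell property forces $0\in K$. Indeed the $v_1$-class node at $n\in\bZ^3$ is joined precisely to the $v_2$-class nodes at $p_2+n+k_i$, so $G(\N)$ depends only on $K$. I would then translate the defining properties of $\N$ into conditions on $K$: connectivity is equivalent to $\langle K\rangle=\bZ^3$ (since $0\in K$, the class of $0$ reaches exactly the subgroup generated by the differences $k_i-k_j$, which equals $\langle K\rangle$); the indivisibility criterion of Proposition \ref{p:divisible} forces $k-l$ to be a primitive vector for every pair $k\neq l$ in $K$; and Lemma \ref{l:7and8lemma} bounds $m\le 8$, while connectivity forces $m\ge 4$. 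This already explains why the coordination ranges over $m=4,\dots,8$.

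Next I would enumerate. For each $m$ one runs over all $K\subseteq\{-1,0,1\}^3$ with $0\in K$ and $|K|=m$, keeping those satisfying indivisibility and $\langle K\rangle=\bZ^3$, and discarding any $K$ for which no placement $p_2\in(0,1)^3$ yields essentially disjoint edges (the set of admissible $p_2$ is a semialgebraic subset of the cube, exactly as in the proof of Theorem \ref{t:finiteness}, so I would either exhibit a valid placement — typically the centred, inversion-symmetric choice $p_2=(\tfrac12,\tfrac12,\tfrac12)$ of Remark \ref{r:mirror2vertex} — or certify emptiness). The survivors are then reduced modulo graph isomorphism: by Proposition \ref{p:isomorphiclatticenets} two such labelled quotient graphs present isomorphic $3$-periodic graphs exactly when their label sets are related by some $X\in GL(3,\bZ)$ together with a relabelling translation $K\mapsto K+t$ and the class swap $K\mapsto -K$, where the translation freedom is precisely the freedom to choose which edge carries the label $0$ and the class swap is the interchange of $v_1,v_2$ subsumed in the graph isomorphism $\phi$ of that proposition. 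Although $GL(3,\bZ)$ is infinite, its action on depth-$1$ representatives has finite, computable orbits — matching is realised in practice by the elementary affine transformations of Section \ref{s:latticenets} together with the cube point group — so the orbit count is effective; this is the GAP-assisted step analogous to the single-vertex case.

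The hard part is to pass from $3$-periodic-graph isomorphism classes to genuine topology (countable-graph) classes. The orbit count only bounds the number of topologies from above, since distinct periodic structures could a priori present the same abstract graph, or a net could secretly admit a finer (e.g. body-centred) translation group collapsing $H(0,m,0)$ to a coarser quotient. I would rule such coincidences out by computing discriminating graph invariants — the coordination sequence and the ten-fold topological density td10 — for every orbit representative and checking that they separate the classes; the pair $8T17,8T21$ of Table 3 illustrates that the coordination sequence is sometimes needed where td10 alone might be inconclusive, and an analogous check settles the borderline double-lattice cases. Conversely, Remark \ref{r:mirror2vertex} guarantees that enantiomorphic pairs coincide for bipartite double lattice nets, so no topology is spuriously split into mirror classes. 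Assembling the per-$m$ orbit counts, verified pairwise distinct by these invariants, yields $1,11,31,40,34$ for $m=4,5,6,7,8$ and hence $117$ topologies; the decline past $m=7$ reflects the increasing stringency of the primitivity constraint as $|K|$ approaches the maximum $8$.
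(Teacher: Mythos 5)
Your proposal is correct and follows essentially the same route as the paper: the paper likewise obtains the count by a computational enumeration of depth-$1$ label sets for $H(0,m,0)$ with $4\le m\le 8$, filtered by the indivisibility criterion of Proposition \ref{p:divisible} and connectivity, reduced modulo the affine relabelling equivalence, and with the resulting classes certified as distinct topologies via invariants such as td10 and point symbols (see the supplementary table for $m=6$). Your extra care about td10 occasionally failing to separate classes is well placed, since the paper's own data notes a coincident td10 pair that must be separated by a further invariant.
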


\subsection{Rigidity and flexibility}\label{s:rigidity}
The analysis of infinitesimal rigidity and flexibility for connected crystal frameworks $\C$ is a well-developed mathematical topic. 
%{\color{blue}REFERENCES} 
In its simplest form a velocity field on the node set is assumed to be periodic with respect to a given periodicity basis $\ul{a}$. This is the so-called \emph{fixed lattice theory} and in fact it corresponds exactly to the rigidity theory of fixed edge-length graph knots on a fixed flat torus for the parallelepiped defined by the periodicity basis.  In this case a finite matrix, the \emph{periodic rigidity matrix} for the pair $(\C, \ul{a})$, determines the space of periodic infinitesimal flexes and so this matrix is a discriminator for the (strict) \emph{periodic rigidity} of $\C$ with respect to $\ul{a}$. On the other hand the \emph{flexible lattice theory} allows for infinitesimal motions of the periodicity basis and so embraces a larger finite dimensional vector space of velocity fields with a correspondingly larger rigidity matrix. See \cite{bor-str}, \cite{pow-affine}. Recently necessary and sufficient conditions have been given for infinitesimal rigidity with respect to the infinite dimensional space of \emph{all} velocity fields. See Kastis and Power \cite{kas-pow-1}.

%In the rigidity theory of bar-joint frameworks (finite or infinite periodic) which have a space group symmetry there are 2 quite different setups which I can describe roughly as (i) "forced symmetry rigidity/flexibility" in which a motion must respect the given symmetry (modulo a rigid motion perhaps), (ii) "incidental rigidity/flexibility" in which, in contrast, the starting symmetry property (say bilateral symmetry) can be abandoned (rather than "carried" with the motion). In the latter case the vertex set has more degrees of freedom. But with symmetry, in compensation so-to-speak, there can be fewer independent edge constraints since the length of an edge determines the length of its symmetric image. 

%There is also an infinitesimal version of the description above which is important since here the counts correspond to dimensions of carefully defined vectors spaces of velocity fields. The forced- and non-forced symmetry theories are different in that there are different N and S counting conditions relevant for characterisations of the 2 flavours of rigidity.

%For the 0n0 nets one might say that "the inversion is always present" in any motion (modulo an ambient motion of the whole space) so this (prima facie) makes the 2 setups coincide ! I shall check (I think not since the allowable rigid motions change ...). 

The fixed lattice theory also has close connections with the analysis of rigid unit modes (RUMs) in material crystals with a connected bond-node net.  
%applies also to entangled crystal frameworks.
See for example the RUM mode analysis in \cite{bad-kit-pow}, \cite{pow-poly}. In fact this analysis also applies to disconnected crystal frameworks with several components if there are no interaction constraints between the components.  
%the analysis generally separates to a component-wise analysis in an elementary manner. 
Indeed, suppose that $\C$ belongs to the interpenetration class and let $\ul{a}$ be a periodicity basis for both $\C$ and each of its finitely many components $\C_i$. Then the RUM spectrum $\Omega(\C)$ of $\C$, with respect to $\ul{a}$, is the union of the RUM spectra of its components. 

%On the other hand it is perhaps natural to define a  \emph{resonant RUM spectrum} in this case which is associated with \emph{resonant RUM modes}. If there are 2 components then we define a \emph{resonant rigid unit mode} for $\C$, with multiphase $\omega \in \bT^3$, to be a pair $u_1, u_2$ where, for each $i$, $u_i$ is a rigid unit mode  for $\C_i$ with multiphase $\omega$. In particular the requirement is that both $u_1, u_2$ are nonzero and that they provide a first-order flex of $\C$ given by  the velocity field $u = (u_1, u_2)$. The multiphases $\omega$ for resonant modes define a resonant spectrum, say $\Omega_{\rm res}(\C)$, and it follows that
%$
%\Omega_{\rm res}(\C) = \Omega(\C_1)\cap\Omega(\C_2).
%$

%\subsection{Critically coordinated frameworks and nets} A $3$-periodic bar-joint framework $\C$ is said to be 

A crystal framework is said to be \emph{critically coordinated}, or to be a \emph{Maxwell framework}, if the quotient graph satisfies
$|E|=3|V|$. This is often interpreted as an equality between the total number of constraints (provided by $|E|$ equations) that restrict the total number of degrees of freedom of a repeating unit of nodes, which is $3|V|$. It also implies an equality of limits of averages over increasing volumes for these constraint/freedom quantities. It is for such frameworks, which includes all zeolite frameworks for example, that the RUM spectrum is typically a nontrivial  algebraic variety exhibiting detailed structure \cite{dov-exotic}, \cite{pow-poly}, \cite{weg}.

In the light of this it is of interest to determine the basic 
Maxwell frameworks $\C$ which have a depth 1 labelled quotient graph with either 1 or 2 vertices. From Proposition \ref{p:117topologies} it follows that there are  31  topologies for  crystal frameworks of this type with the unit cell property and quotient graph $H(0,6,0)$.
% results in the previous section there are a finite number of quotient graphs $H(m_1, m_2, m_3)$ for such periodic frameworks $\C$ which additionally satisfy the Maxwell condition. Also there are  31  crystal frameworks of this type with loopless quotient graph $H(0,6,0)$. 
These remarks suggest that it would be worthwhile to augment periodic net database resources with  tools for the identification of Maxwell lattices and the calculation of flexibility information related to RUM spectra.

%{\color{blue}TO DO }[Indicate some of these 060 nets with high symmetry.....]

%\section{Appendix ?}
%{\color{blue}TO DO. Some data and narrative about the 0n0 nets .... or perhaps it should go in main text.}

\section{Appendix A}

{\bf The proof of Theorem \ref{t:multigridcount}.}

\begin{proof}
%{\color{red}improved and almost completed}
Note first that any connected component $K_i$ of $K$ is determined by the position of its unique node in $[0,1)^3$.  
Thus $K$ is determined by the position $p_i=(x_i, y_i, z_i), 1 \leq i \leq n$, of its $n$-tuple of nodes. Also, in view of the disjointness of components two such nodes $p_i, p_j$ have differing corresponding coordinates in $[0,1)$.
Consider a deformation path $(f_t)$ from $K$ to $K'$.
Since the graph knots $f_t(K)$ are also graph knots of $n$-grids, and edge collisions cannot occur in the deformation, it follows that  
the cyclical order of the $x$- $y$- and $z$-coordinates of the points  $f_t(p_1), \dots , f_t(p_n)$, is constant. Thus the ordered triple of cyclic orders for the coordinates is an invariant for linear graph knot isotopy.
%This follows from the fact that the graph knot edges are of reflexive type and so retain their coordinate directions under deformation. 
 
Despite the constraint of coordinate distinctness we see that $K$ can be linearly isotopic to an $n$-grid graph knot $K'$ determined by $p_i'=(x_i', y_i', z_i'), 1 \leq i \leq n$,
where the $n$ coordinates $x_i'$ lie at the midpoints of the distinct subintervals of the form
$[j/n, (j+1)/n), 0 \leq j \leq n-1$. This spacing is achieved by  simultaneously translating the points $p_i$ in the $x$-direction at appropriate independent speeds while maintaining $x$-coordinate distinctness.
Additionally, the equal spacing of the $y$- and $z$-coordinates  can be achieved by similar isotopies which locally translate in the $y$- and  $z$-direction. The resulting position is unique up to the cyclic permutation action of $C_n\times C_n\times C_n$ on the coordinate axes. It follows now that two graph knots of $n$-grids are linearly  isotopic if the cyclic order of their coordinates coincide. Thus the set of cyclic orders is a complete invariant for linear graph knot isotopy and (i) and (ii) follow.

 Assume next that the $n$-grids $\N$ and $\N'$ are periodically isotopic. It will suffice to show that their linear  graph knots are rotationally linearly isotopic. 
 
 Without loss of generality we may assume that the components have node sets that lie on translates of the lattice $\bZ^3$ in $\bR^3$. Thus, by the definition of periodic isotopy there are periodicity bases $\ul{a}=\{a_1,a_2,a_3\}$ and $\ul{a'}=\{a_1',a_2',a_3'\}$, with integer entries, such that $(\N, \ul{a})$ and $(\N', \ul{a}')$ are strictly periodically isotopic by means of a deformation path $(f_t)$ and an associated path of bases 
 $\ul{a}^t$ from $\ul{a}$ to $\ul{a'}$. 
Define $k_1$ to be a common multiple of the $x$-coordinates of
$\{a_1,a_2,a_3\}$ and similarly define $k_2, k_3$ for the $y$- and $z$-coordinates.
Then there is an implied periodic isotopy between  $(\N, k\cdot \ul{b})$ and $(\N', \ul{a}'')$, for some periodicity basis $\ul{a}''$ with integer entries. This is given by the \emph{same} periodic isotopy deformation path $(f_t)$ but with a new associated path of  bases (for lower translational symmetry) which is determined by the initial basis $k\cdot \ul{b}$ and the path $\ul{a}^t$.
%, where
% $\ul{a}^0 = k\cdot \ul{b}$  and   $\ul{a}^1 = \ul{a}''$.
So, without loss of generality we may assume at the outset that $\ul{a}=  k\cdot \ul{b}$.

%BEING WRITTEN

We next show that $\ul{a}'$ is equal to  $k'\cdot \ul{b}$ where $k'$ is a cyclic permutation of $k$. Thus we will obtain that the linear graph knots $\lgk(\N_0,k\cdot \ul{b}),$ $\lgk(\N_1,k'\cdot \ul{b})$ are rotationally linearly isotopic.

To see this  consider 
%the restriction of $(f_t)$ to 
a single component $\N_0^1$ of the $n$-grid $\N_0$. Note that the linear graph knot $\lgk(\N_0^1,k\cdot \ul{b})$ has minimal discrete length cycles $c_1, c_2, c_3$ with homology classes $\delta_1, \delta_2, \delta_3$, respectively, equal to the standard generators of the homology group $H_1(\bT^3, \bZ)=\bZ^3$ of the containing flat 3-torus. These discrete lengths are $k_1, k_2, k_3$. Moreover we see, from the rectangular geometry of $\N_0^1$, the following uniqueness property, that if $c_1$ and $c_1'$ are 2 such minimal length cycles for $\delta_1$ which share a node then $c_1 = c_1'$. Indeed, the minimality implies that the edges of $c_1$ are parallel or, equivalently, that the nodes of $c_1$ can only differ in $x$-coordinate.

Let $\N_1^1$ be the corresponding component of $\N_1$. In fact 
$\N_1^1= f_1(\N_0^1)$). The linear graph knot $\lgk(\N_1^1,\ul{a}')$ is, by definition, equal to the affine rescaling of the intersection of the body $|\N_1^1|$ with the semiopen parallelepiped defined by the periodicity vectors $a_1', a_2', a_3'$. We note that if
$\ul{a}'$ is \emph{not} of the form  $k'\cdot \ul{b}$ then for at least one of the the standard generators $\delta_i$ (associated with $a_i'$) of the flat 3-torus homology group $H_1(\bT^3, \bZ)=\bZ^3$, the minimal length cycles do \emph{not} have the uniqueness property. This follows from elementary geometry since not all edges of the cycle can be parallel when $a_i'$ is not parallel to a coordinate axis.

On the other hand the linear isotopy between $\lgk(\N_0^1,k\cdot \ul{b})$ and $\lgk(\N_1^1,k'\cdot \ul{b})$ preserves the lengths of cycles of edges and so the claim follows.
Since we have shown that $\lgk(\N, k\cdot \ul{b})$ and $\lgk(\N', k\cdot \ul{b})$ are isotopic linear graph knots up to a rotation it follows from the technical lemma, Lemma \ref{l:amplifiedgrids}
that the graph knots $\lgk(\N,\ul{b})$ and $\lgk(\N', k\cdot \ul{b})$ are linearly isotopic up to a rotation, and so (iii) now follows from (ii). \end{proof}

\begin{rem}\label{r:conjecture} 
Recall the notation $k\cdot \ul{a}= (k_1a_1,k_2a_2,k_3a_3)$ introduced in the proof of Theorem  \ref{t:equivalencerelation}. Let us say that this denotes the \emph{amplification} by $k\in \bZ^3_+$ of the periodic structure basis $\ul{a}$.
We pose the following general problem. If $\N_1, \N_2$ are connected linear periodic nets which have a common periodic structure basis $\ul{a}$ and if  the pair 
$(\N_1,k\cdot \ul{a})$ is periodically isotopic to
the pair $(\N_2,k\cdot \ul{a})$, then does it follow that $\N_1$ and $\N_2$ are periodically isotopic ? In view of the proposition above this is equivalent to the corresponding problem for linear graph knots $K$ and their $k$-fold \emph{amplifications} which we may write as $k\cdot K$. We expect that this is true and therefore  that the amplified knots are isotopic if and only if the unamplified knots are isotopic. In fact one can verify this connection for various specific classes of interest, as we do below in the case of multigrid nets.
\end{rem}

The following technical lemma resolves the question of Remark \ref{r:conjecture} in the case of $n$-grids. 

\begin{lem}\label{l:amplifiedgrids} Let $\N_0$ and $\N_1$ be shift homogeneous  $n$-grids with standard linear graph knots $K_0$ and $ K_1$ and suppose that for some $k\in \bZ^3$ the amplified graph knots $k\cdot K_0$ and $k \cdot K_1$ are isotopic. Then $K_0$ and $K_1$ are strictly linearly isotopic.
\end{lem}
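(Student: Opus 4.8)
The plan is to reduce everything to the complete invariant for $n$-grids established in Theorem \ref{t:multigridcount}(i): two standard graph knots of $n$-grids are linearly isotopic if and only if their patterns of nodes agree, up to relabelling of components and the action of $C_n\times C_n\times C_n$ on the three coordinate cyclic orders. Thus it suffices to prove that if $k\cdot K_0$ and $k\cdot K_1$ are linearly isotopic, then the patterns of $K_0$ and $K_1$ coincide up to this symmetry.

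First I would make the amplified knot explicit. Writing the node of the $i$th component of the $n$-grid $\N_0$ as $p_i=(x_i,y_i,z_i)\in[0,1)^3$, the amplified graph knot $k\cdot K_0 = \lgk(\N_0,k\cdot\ul{b})$ is the disjoint union of $n$ copies of the quotient of $\M_{\rm pcu}$ by the sublattice $k_1\bZ\times k_2\bZ\times k_3\bZ$, the $i$th copy having nodes at the evenly spaced points $((x_i+a)/k_1,(y_i+b)/k_2,(z_i+c)/k_3)$ for $0\le a<k_1$, $0\le b<k_2$, $0\le c<k_3$. In particular the $n$ connected components of $k\cdot K_0$ correspond bijectively to those of $K_0$, and in the fine $x$-ordering of all $nk_1$ distinct $x$-values the ranks occupied by the $i$th component form the coset $r_x(i)+n\bZ$ of $\bZ_{nk_1}$, where $r_x(i)$ is the rank of $x_i$ among $x_1,\dots,x_n$; similarly in the $y$- and $z$-directions. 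Reading off these cosets recovers $(r_x(i),r_y(i),r_z(i))\bmod n$, i.e.\ the pattern of $K_0$, while the residual symmetry $C_{nk_1}\times C_{nk_2}\times C_{nk_3}$ of the fine cyclic orders reduces modulo $n$ surjectively onto the $C_n\times C_n\times C_n$ symmetry of the pattern of $K_0$. Hence the pattern of $K_0$ is recoverable, up to the correct symmetry, from the \emph{cross-component interleaving data} of $k\cdot K_0$, namely the pairwise relative cyclic orders in each coordinate of the node-ranks of distinct components.

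The key step is to prove that this interleaving data is a linear isotopy invariant of the amplified knot. Here the difficulty is that $k\cdot K_0$ is not itself an $n$-grid: its nodes fail to be coordinate-distinct (nodes of a single component routinely share a coordinate) and it carries the non-loop edges of the toroidal grid, so the argument of Theorem \ref{t:multigridcount}, which preserves coordinate cyclic orders because $n$ distinct points cannot pass through one another on a circle, does not apply verbatim. The resolution I propose is to descend that argument to the level of pairs of distinct components. The connected components are preserved throughout any isotopy, and I would show that the relative cyclic interleaving of the node-ranks of two components $i\neq j$ in a fixed coordinate direction cannot change: a change of this interleaving forces, at some instant, an $x$-coordinate of a node of component $i$ to agree with that of a node of component $j$, and then the same perpendicular-edge crossing mechanism as in the grid case (the $y$-directed edge at the one node meeting the $z$-directed edge at the other) produces an edge collision \emph{between distinct components}, which is forbidden. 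The within-component coordinate coincidences, by contrast, are exactly those permitted by the consistent noncrossing grid structure and carry no pattern information, so they may be ignored.

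With the invariance of the cross-component interleaving in hand, a linear isotopy $k\cdot K_0\rightsquigarrow k\cdot K_1$ forces the interleaving data of the two amplified knots to agree up to relabelling of components and the $C_{nk_1}\times C_{nk_2}\times C_{nk_3}$ action; reducing modulo $n$ as above, the patterns of $K_0$ and $K_1$ agree up to relabelling and $C_n\times C_n\times C_n$, so by the completeness half of Theorem \ref{t:multigridcount}(i) the knots $K_0$ and $K_1$ are strictly linearly isotopic. I expect the main obstacle to be precisely this invariance for the non-grid amplified knots: making the perpendicular-edge crossing argument rigorous, and in particular ruling out that an interleaving change could be realised through the extra freedom afforded by the internal grid edges and the within-component coincidences, is the technical heart of the proof. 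An alternative route to the same conclusion would be to show that the isotopy may be chosen equivariant for the deck group $\bZ_{k_1}\times\bZ_{k_2}\times\bZ_{k_3}$ of the covering $\bT^3\to\bT^3$ under which $k\cdot K_0$ is the pullback of $K_0$, so that it descends directly to an isotopy carrying $K_0$ onto $K_1$.
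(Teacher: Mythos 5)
Your overall strategy --- reduce to the cyclic-order invariant of Theorem \ref{t:multigridcount}(i) and show that a linear isotopy of the amplified knots forces the patterns of $K_0$ and $K_1$ to agree up to relabelling and the $C_n\times C_n\times C_n$ action --- is the same as the paper's, and your description of $k\cdot K_0$ and of how the pattern sits inside the fine coordinate data is correct. The gap is exactly where you predicted it would be: the invariance of the cross-component interleaving under linear isotopy of $k\cdot K_0$ is not established by the perpendicular-edge mechanism, and in the form you state it the invariance is actually false. The perpendicular-edge argument works for $n$-grids because each component of an intermediate knot has a single node whose three loop edges are labelled by the standard basis vectors; a flat-torus line segment from $p$ to $p+e_1$ is forced to be the axial unit segment, so every intermediate knot is again a union of straight grid translates and a coordinate coincidence between two nodes really does produce a crossing of perpendicular axial edges. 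The components of $k\cdot K_0$, by contrast, are toroidal grids with $k_1k_2k_3$ nodes joined by \emph{non-loop} edges whose endpoints move independently, so the intermediate knots are floppy: at the instant when a node $a$ of component $i$ and a node $b$ of component $j$ share an $x$-coordinate, the edges at $a$ and $b$ need not be axial and need not be anywhere near one another. Concretely, if $a$ and $b$ are well separated in the $y$- and $z$-coordinates, a small translation of $a$ alone in the $x$-direction is a proper linear isotopy (properness is an open condition) that slides $a$ past $b$ and changes the fine interleaving; in particular the ranks of a component no longer form a coset of $n\bZ$ at intermediate (or even terminal, after such a move) stages, so the ``reduction mod $n$'' of your data is not even well defined along the isotopy, let alone invariant. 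Your fallback suggestion (choosing the isotopy equivariantly for the deck group of the covering $k\cdot\bT^3\to\bT^3$) would indeed suffice, but there is no reason a given isotopy can be made equivariant, and you give no argument for it.

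The paper escapes this difficulty by replacing the $0$-dimensional node data with $1$-dimensional cycle data. It marks in each component of $k\cdot K_0$ a ``chain'': a small cube of edges attached to the three axial cycles through a common vertex $p_1$, normalised so that the $n$ marked vertices lie in a common small cube of the flat torus. The union $J_0$ of these chains is carried by the isotopy to a union of chains in $k\cdot K_1$, and the objects whose disjointness is exploited are the $n$ axial cycles in each coordinate direction --- essential closed curves winding once around the big torus --- rather than individual nodes. Two such disjoint curves cannot exchange their cyclic positions the way two isolated nodes can, and the cyclic orders read off from the straight chains at $t=0$ and $t=1$ are precisely the patterns of $K_0$ and $K_1$. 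If you want to repair your argument along your own lines, you would need to attach to each node some such global, homologically essential substructure of its component before tracking coordinates; tracking the nodes themselves is not enough.
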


\begin{proof}
Figure \ref{f:amppcu} indicates a subgraph knot, $C_0$ say, of one of the components of $k\cdot K_0$ in the case that $k=(3,6,5)$. We refer to this as a "chain". It consists of a small cube of edges attached to 3 cycles of edges in the axial directions. Let $p_1$ denote the vertex which is common to these 3 cycles. The other $n-1$ components of $k\cdot K_0$ have similar chains which are shifts of $C_0$ and there is a \emph{unique} such chain where the shift of $p_1$ lies in the semiopen small cube $p_1+[0,1)^3$ of the flat 3-torus. Let $p_2, \dots , p_n$ be these axial joints and  let $J_0= J_0(p_1,\dots , p_n)$ be the union of these chains (giving a linear subgraph knot of $k\cdot K_0$).
\begin{center}
\begin{figure}[ht]
\centering
\includegraphics[width=5.5cm]{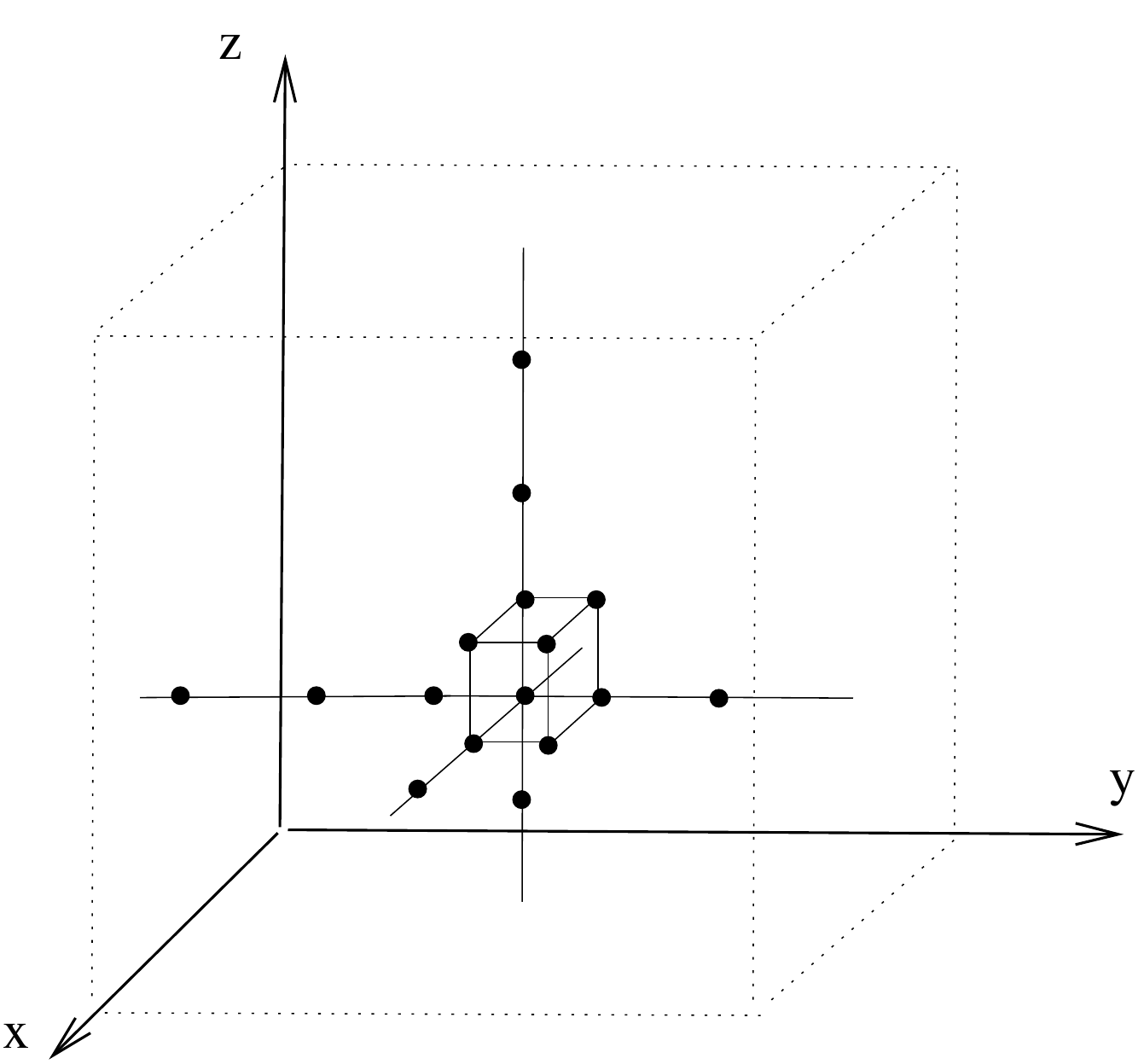}
\caption{A chain subgraph knot of $k\cdot K_{\rm pcu}$.} 
\label{f:amppcu}
\end{figure}
\end{center}

Suppose that $k\cdot K_0$ and $k\cdot K_1$ are linearly isotopic, by the isotopy $(g_t), 0\leq t\leq 1$. This restricts to a linear isotopy from $J_0$ to a subgraph knot  $g_1(J_0)$ of  $k\cdot K_1$. In this isotopy the images under $g_t$, for $0 <t<1$,  of the $n$ axial cycles of $J_0$ in a specific coordinate direction need not be linear. However, since there can be no collisions the cyclical order for $t=0$ agrees with the cyclical orders for $t=1$. It follows that 
$g_1(J_0)$, which has the form
$J_0(q_1,\dots , q_n)$, is a subgraph knot of $k\cdot K_1$ of the same cyclical type as the subgraph knot $J_0$. Since $K_0$ and $K_1$ are also defined by the cyclical order of $p_1, \dots p_n$ and $q_1, \dots ,q_n$ it follows that they are linearly isotopic.
\end{proof}

\section{Appendix B}

%\newpage

%\section{Extended Supplementary Information I}

{\bf The proof of Theorem \ref{t:1vertexQG}.}

\begin{proof} Let $\M$ be model net with adjacency depth 1 and a single vertex quotient graph. We show that $\M$ is equivalent
%, by which we mean, more precisely, $\Aff(\bZ)^+$-equivalent,  
to one of the 19 model nets by an elementary affine transformation.

\medskip

{\bf The case $m=3$.} In all cases it is clear that $\M$ is equivalent to $\M_{\rm pcu}$.
\medskip

{\bf The case  $m=4$.} We consider 4 subcases:

(i) Assume that 3 of the edges of $F_e$ are axial edges. Then  $\M$ is obtained from $\M_{\rm pcu}$ by the addition of an additional edge to the motif. If this is a facial edge then, by rotation and translation $\M$ is equivalent to $\M_{\rm pcu}^f$, the model net for the word $a_xa_ya_zf_x$. If the extra edge  is a diagonal  edge then $\M$ is equivalent to $\M_{\rm pcu}^d$.

(ii) Assume that exactly  $2$ of the 4 edges of $F_e$ are axial edges. We may these are $a_x, a_y$ and we may also assume that neither of the remaining 2 edges is in the $xy$-plane since in this case there would be a triple of coplanar edges in $F_e$ and $\M$ would be equivalent to $\M_{\rm pcu}^f$. Suppose first that there is no diagonal edge and so $\M$ is of type $a_xa_yw$ with $w$ one of $f_xf_y, f_xg_y, g_xf_y, g_xg_y$. These nets are pairwise equivalent by rotation about the $z$-axis and translation. By an elementary affine transformation they are thus all equivalent to $\M_{\rm pcu}^d$. 

Assume on the other hand that only 1 of the 2 extra edges is a facial edge. Translating and rotating we may assume that this edge is $f_x$. Also we may assume a noncoplanarity position of the diagonal edge with respect to $f_x$ and $a_x$, as in Figure \ref{f:motifexamples2}, since otherwise there is an oriented affine equivalence with the model net for ${\bf hex}$.
\begin{center}
\begin{figure}[ht]
\centering
\includegraphics[width=3.5cm]{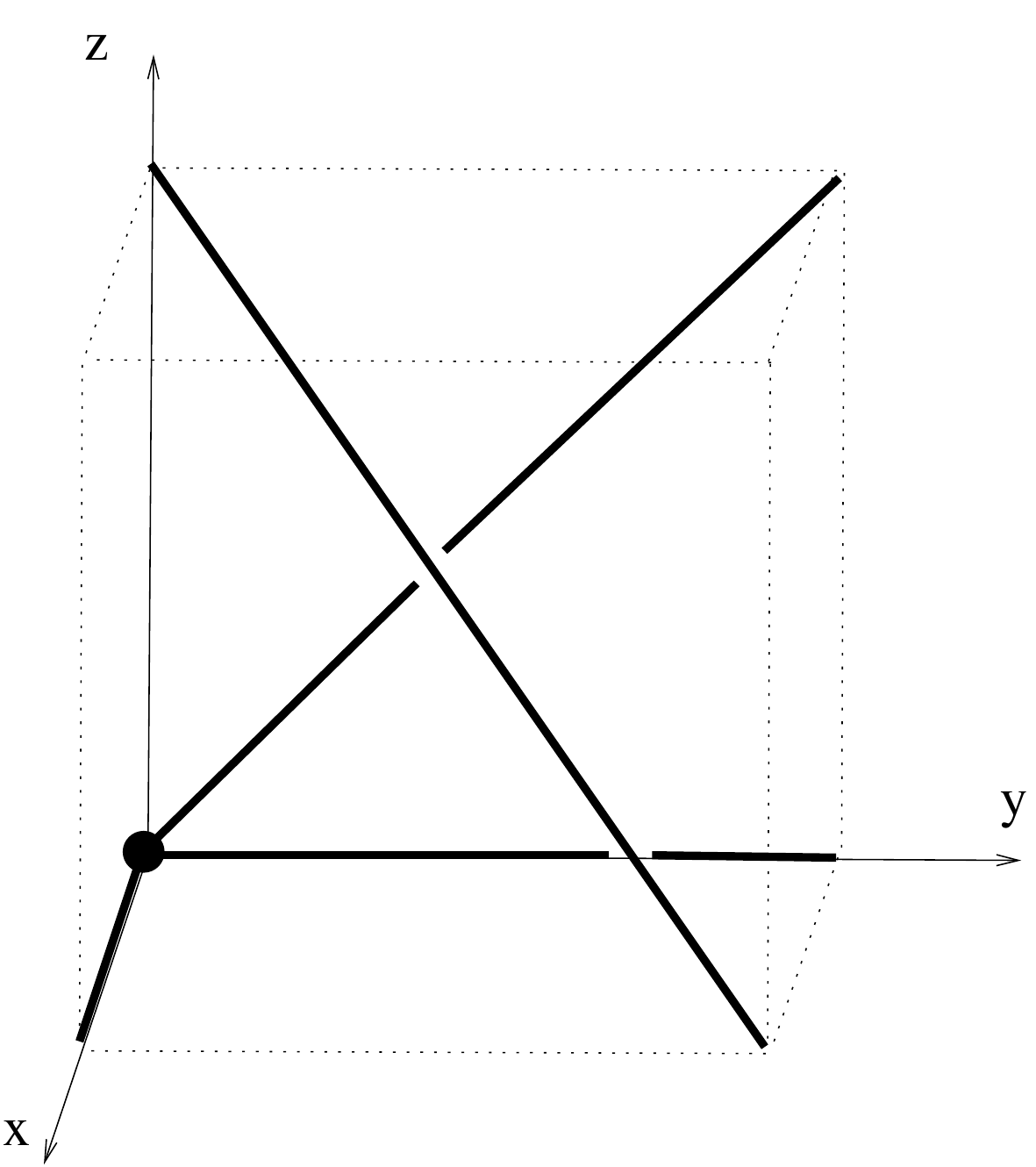}\quad
\includegraphics[width=3.5cm]{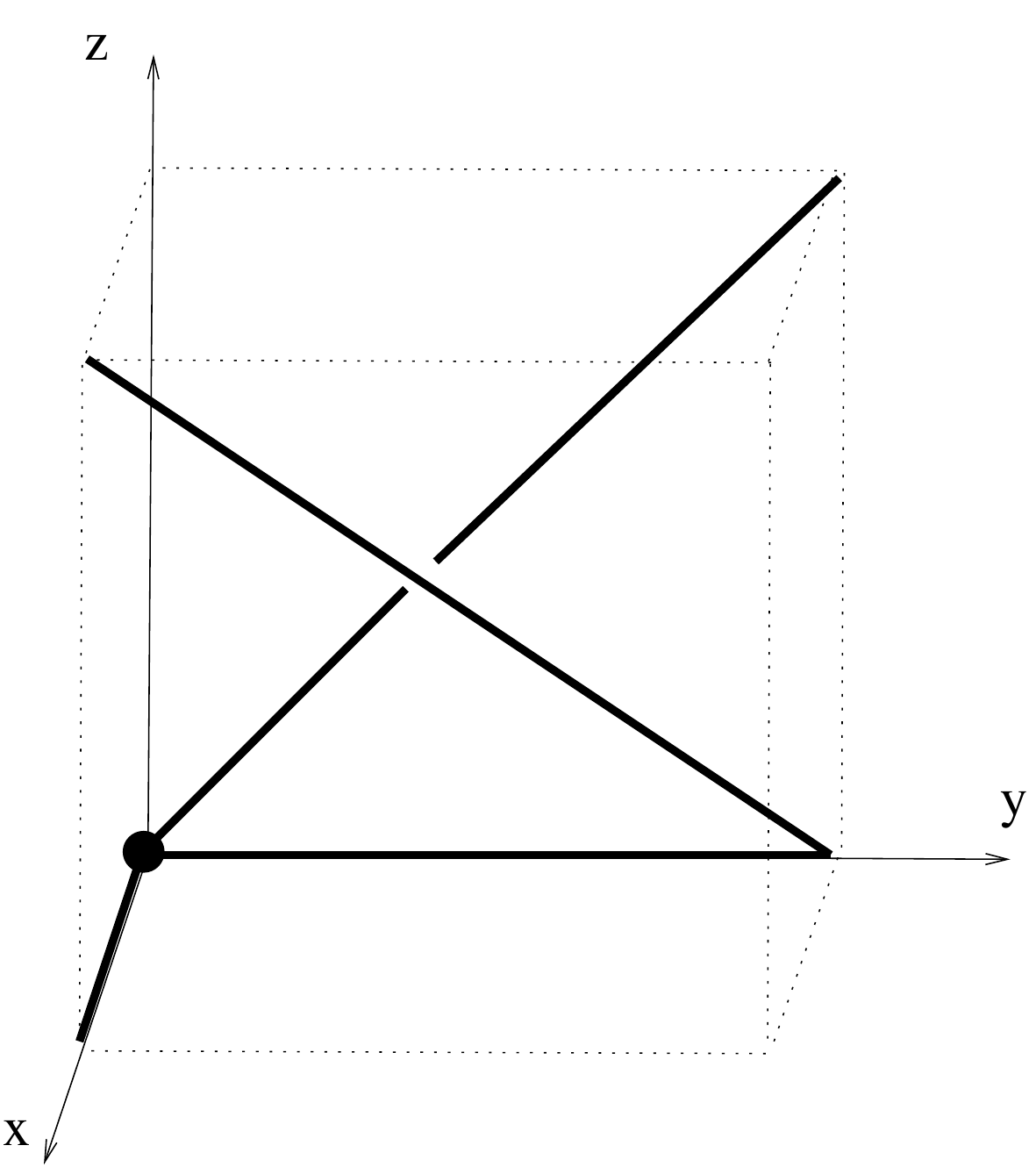}\quad
\caption{Some motifs of type $aafd$.}
\label{f:motifexamples2}
\end{figure}
\end{center}
The resulting 2 model nets, $\M_1$ and $\M_2$ are equivalent by a rotation about the line through the centre of the cube in the $x$-axis direction. Thus $\M_1$ and $\M_2$ are equivalent to the model net $\M_{aad}^g$
for the word $a_xa_yg_xd_1$. 

(iii) Assume that exactly $1$ of the 4 edges of $F_e$ is an axial edge, which we may assume lies in the $x$- axis. If the 3 remaining edges are the $f$-edges that are incident to the origin, then the transformation of $\M$ by the map
$(x,y,z)\to (x-z,y,z)$ has type $aaad$ and so is equivalent to $\M_{\rm pcu}^d$.
If the 3 remaining $f$ edges are not of this form then they are either coplanar (and, as before, $\M$ is equivalent to $\M_{\rm pcu}^f$) or only 1 of these 3 edges is incident to the origin, as in Figure \ref{f:motifexamples3}. In these cases $\M$ is equivalent to a model net with 2 axial edges and so the previous arguments suffice.

\begin{center}
\begin{figure}[ht]
\centering
\includegraphics[width=3.5cm]{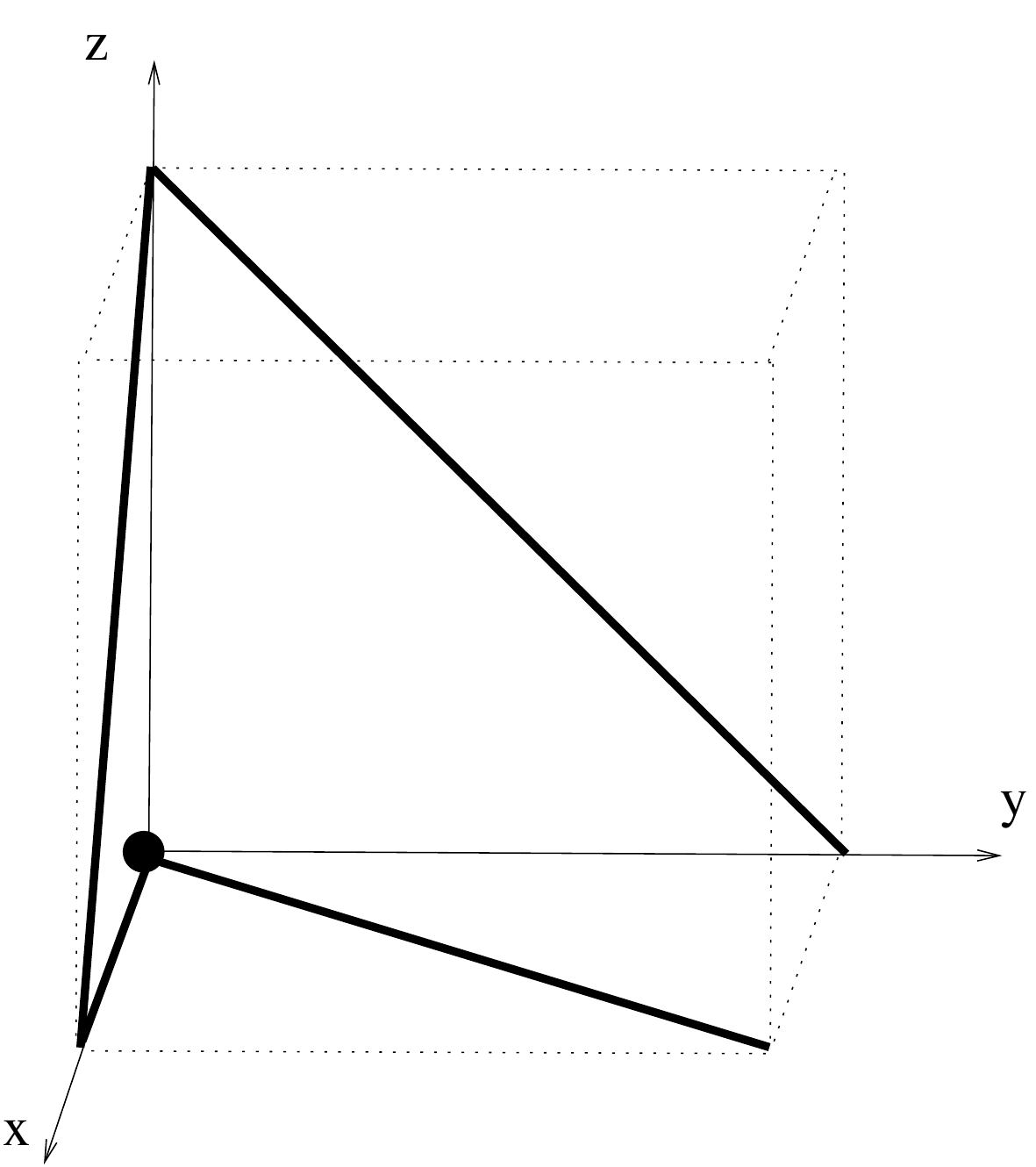}\caption{A motif with 3 non coplanar facial edges.}
\label{f:motifexamples3}
\end{figure}
\end{center}

Thus we may assume that the defining word for $\M$ is of type $affd, afgd$ or $aggd$. Moreover by rotational and translational equivalence we may assume that the possible types are $a_xffd_1, a_xfgd_1$ or $a_xggd_1$.
If all four edges are incident to the origin then $\M$ is equivalent by an elementary affine transformation to a model net with 2 axial edges and so there are no new cases to consider. Also if 3 edges are incident to the origin then once again the net is equivalent to the net for {\bf hex}, and so it remains to consider the cases $a_xg_xg_yd_1, a_xg_xg_zd_1$
and $a_xg_yg_zd_1$ indicated in Figure \ref{f:4penetrating}. 

Note that the first and third nets are the nets $\M_{\rm ad}^{gg}$ and $\M_{\rm ad}^{g_yg_z}$ in the list of model nets. That these nets are not isomorphic follows from their topological density counts.
The  second net
%, which we write as $\M(a_xg_xg_zd_1)$, 
has a rotation about the diagonal which is a mirror image of the first net and so is equivalent to it by elementary transformations in view of Remark \ref{r:mirror2vertex}.
%{\color{blue}add more about $\M_{\rm ad}^{gg}$ and $\M_{\rm ad}^{g_yg_z}$....}

\begin{center}
\begin{figure}[ht]
\centering
\includegraphics[width=3.5cm]{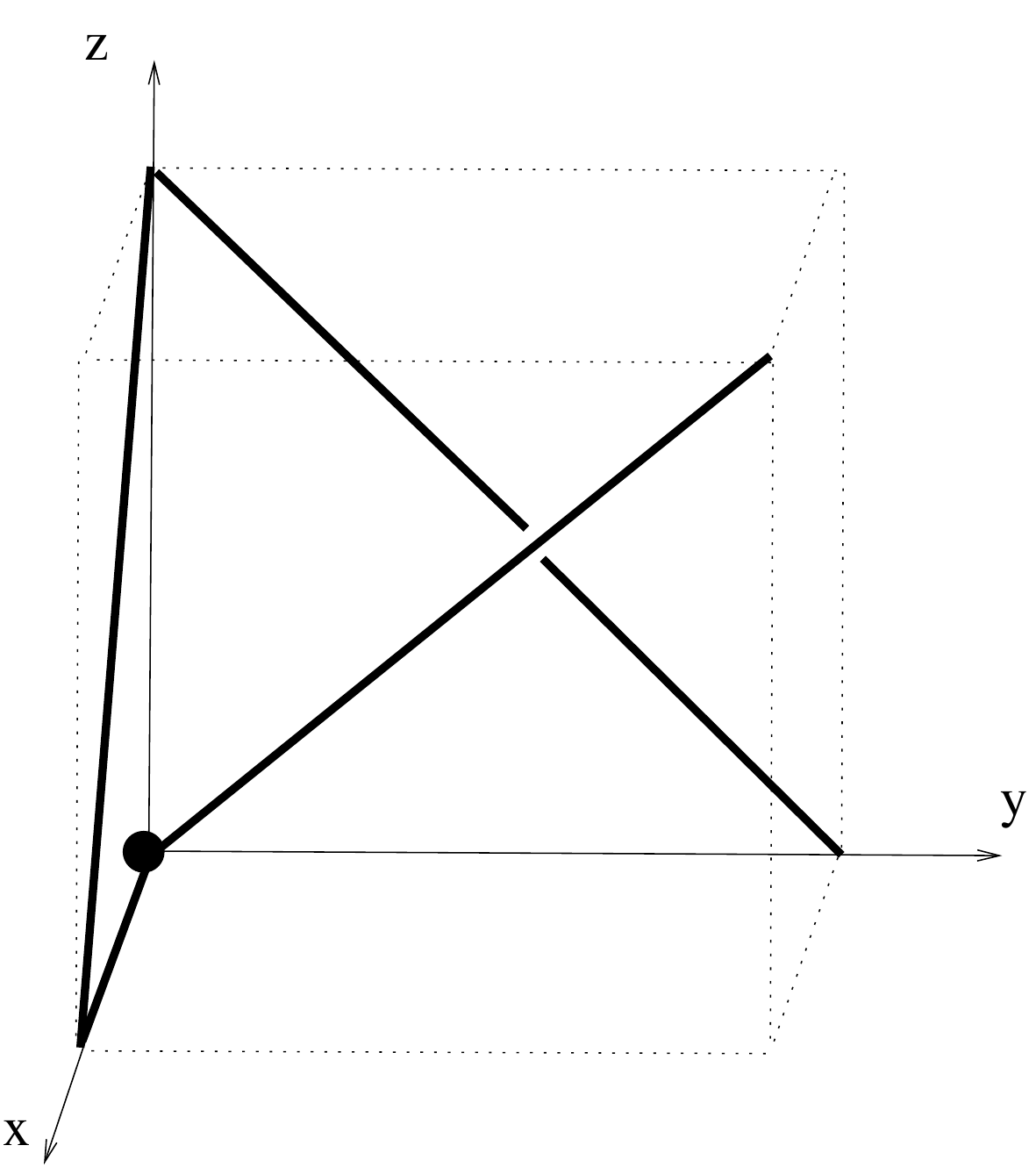}\quad
\includegraphics[width=3.5cm]{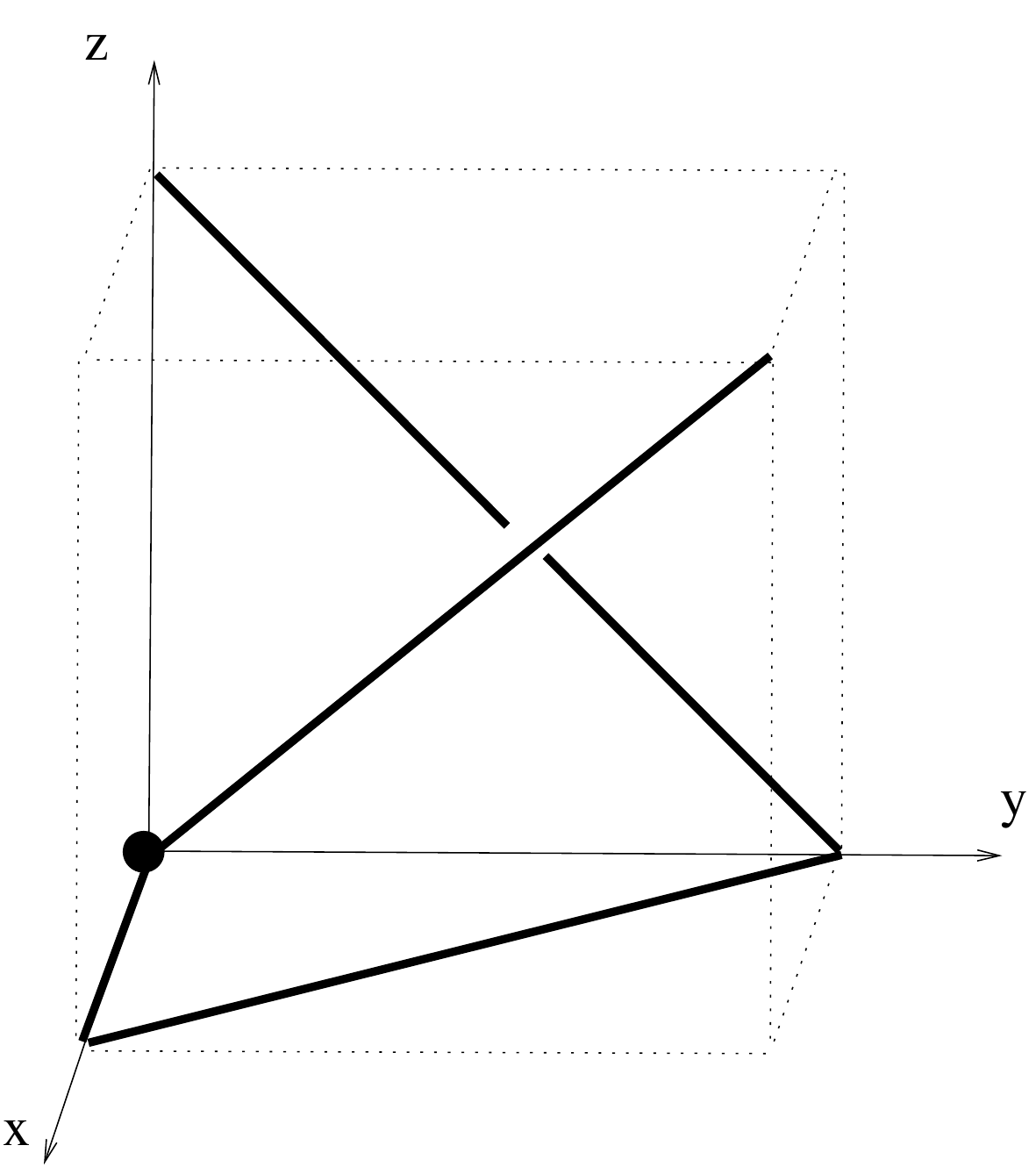}\quad
\includegraphics[width=3.5cm]{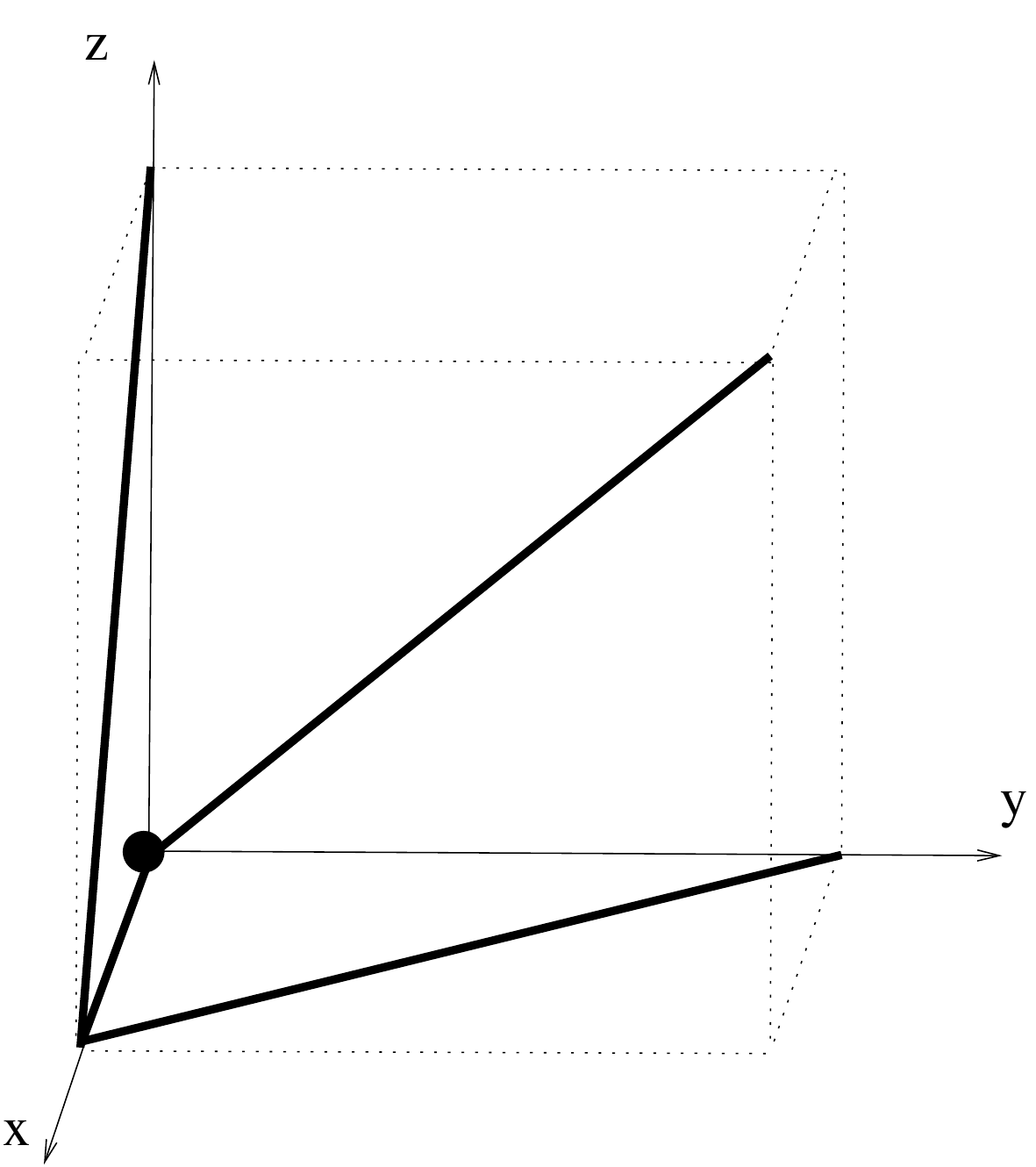}
%\caption{Some motifs of type $aafd$.}
\caption{Motifs for the model nets $\M(a_xg_xg_yd_1), \M(a_xg_xg_zd_1)$
and $\M(a_xg_yg_zd_1)$ .}
\label{f:4penetrating}
\end{figure}
\end{center}

(iv) Finally, for the case $m=4$, we assume that there are no axial edges. By rotational symmetry there are 4 cases which, under the convention are uniquely specified by the words $fffd, ffgd, fggd$ and $gggd$. The last of these corresponds to a disconnected net, as we have seen in the previous section, the first gives an alternative model net for {\bf ilc} (as we have remarked prior to the proof),
and the other 2 nets, for
$ffgd$ and $fggd$, are easily seen to be affinely equivalent to a model net with 1 axial edge.
\medskip

{\bf The case $m=5$.} It is straightforward to see that if $\M$ has 3 axial edges and 2 face edges then it is equivalent to the model net $\M_{\rm pcu}^{ff}$ for {\bf bct}. Also, type $aaafd$ is equivalent to this type.
On the other hand, type $aaagd$ has $\hxl$-multiplicity equal to $1$, rather than $2$, and so is in a new equivalence class, also with no edge penetrations. In fact this model net has topology {\bf ile}.

Consider next the model nets with 2 axial edges and no diagonal edges. These also have no penetrating edges and are of $\hxl$-multiplicity 1 or 2. Moreover it is straightforward to show that each is equivalent by elementary affine transformations to a model net with 3 axial edges and so they equivalent to the model nets for {\bf bct} and {\bf ile} respectively. 
The same is true for the 9 nets of type $aawd$ where $w$ is a word in 2 facial edges which is not of type $gg$.

Thus, in the case of 2 axial edges it remains to consider the types $a_xa_ywd_1$ with $w= g_xg_y, g_xg_z$ and $g_yg_z$ each of which has a penetrating edge of type $4^2$. The first two of these are model nets in the list and give new and distinct affine equivalence classes in view of their penetration type and differing $\hxl(\N)$ count. The third net, for the word $a_xa_yg_yg_zd_1$ is a mirror image of the first net and is orientedly affinely equivalent to it, by Remark \ref{r:mirror2vertex} for example. 

It remains to consider the case of 1 axial edge, $a_x$, together with $d_1$ and 3 facial edges. If there are 2 edges of type $f_x, f_y$ or $f_z$ then there is an elementary equivalence with a model net with 2 axial edges. The same applies if there is a single such edge. For an explicit example consider $a_xf_xg_yg_zd_1$. The image of this net under the transformation $(x,y,z)\to (x,y-z,z)$ gives a depth 1 net with 2 axial edges. The transformation of motifs is indicated in Figure \ref{f:afggd}.

\begin{center}
\begin{figure}[ht]
\centering
\includegraphics[width=3.5cm]{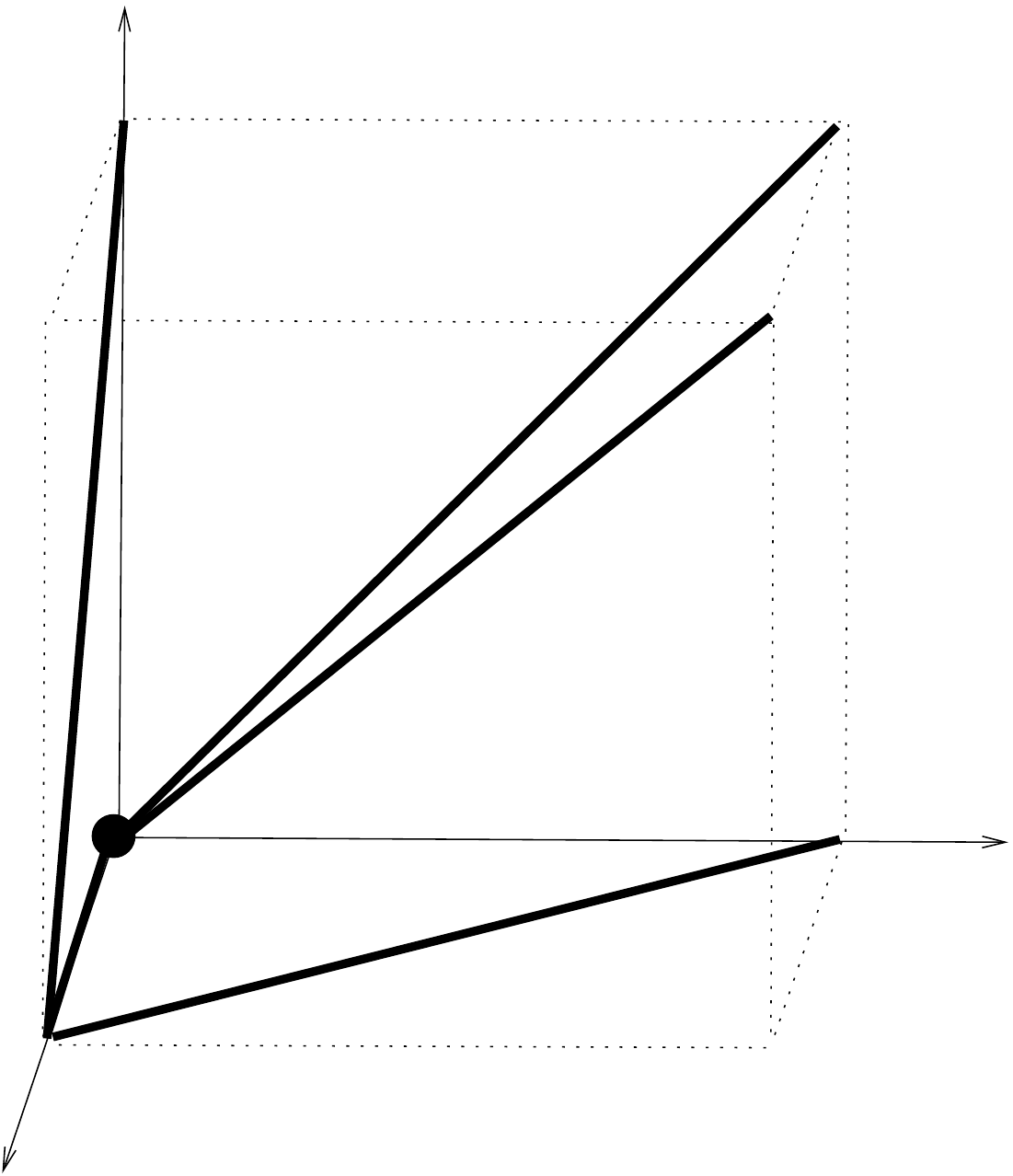}\quad \quad
\includegraphics[width=3.5cm]{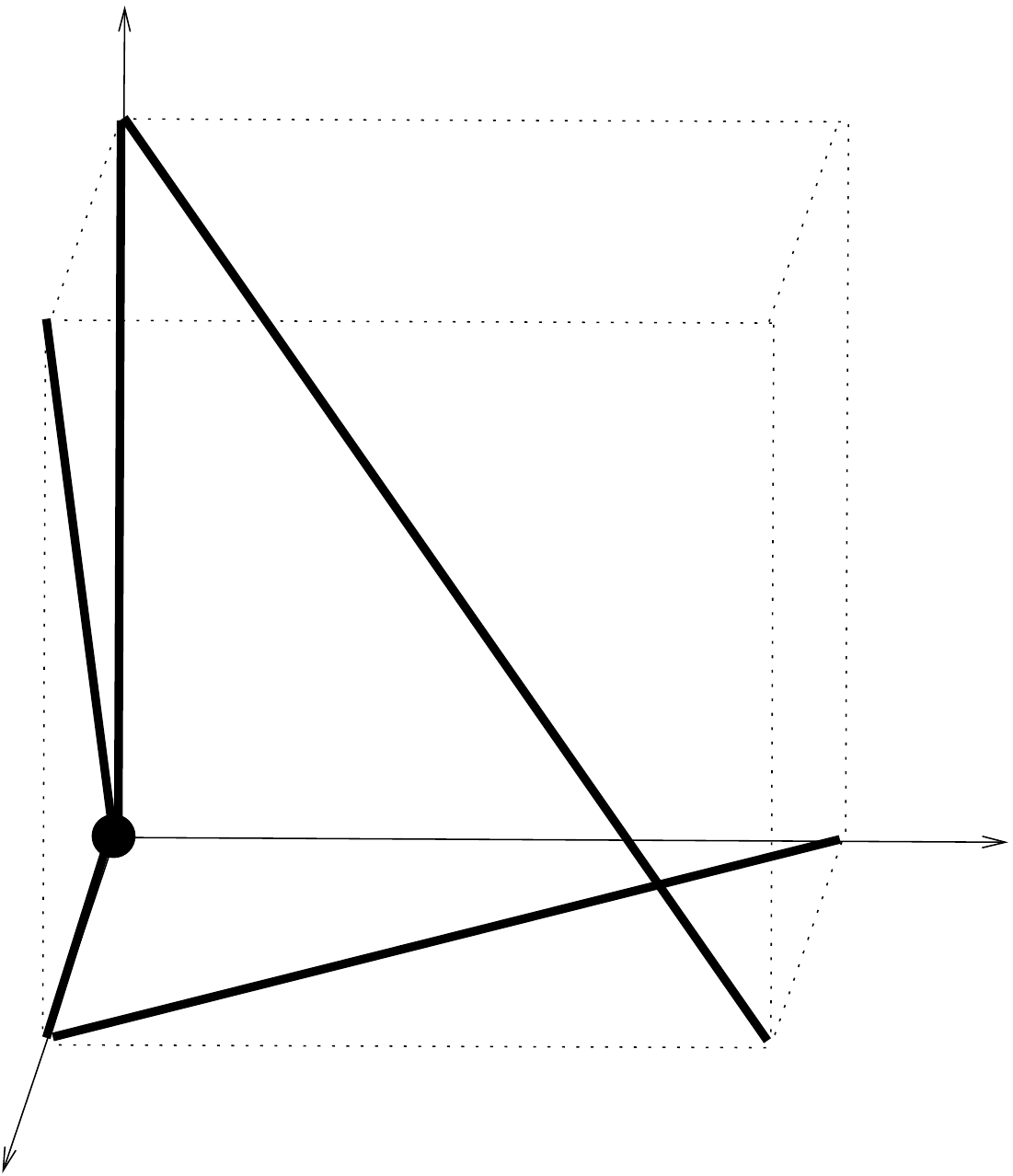}
\caption{Change of motif under $(x,y,z)\to (x,y-z,z)$.}
\label{f:afggd}
\end{figure}
\end{center}

Finally the model net for $a_xg_xg_yg_zd_1$ appears in the listing and gives a new affine class with penetration type $3^2$.

\medskip

{\bf The case $m=6$.} 
We first assume that there is no diagonal edge in the motif for $\M$ and therefore no edge penetration of type $4^2$ or $3^2$. There are 2 distinguished model nets in the list for this case, one with the 3 facial edges of type $f$ (a net with topology {\bf ild}) and one where the 3 facial edges are of type $g$ (a net with topology {\bf fcu}). Two other choices of facial edges are possible (up to rotation) and these are readily seen to be equivalent  to the {\bf ild} and {\bf fcu} nets.

We may now assume that there exists a diagonal edge in the standardised form of the edge word defining $\M$. If there are 3 axial edges then there are 3 possibilities, namely types $aaaffd, aaafgd, aaaggd.$ The first 2 cases are not new, since the transformation $(x,y,z)\to (x,y-z,z)$ give motifs without a diagonal edge, while the model net for $aaaggd$ appears in the list, with penetration type $4^2$ and $\hxl(\M)=2$.

We may now assume that $\M$ has a standardised word  $a_xa_ywd_1$ where $w$ is a word in 3 facial edges.
For $w$ of $fff$ type there are 3 cases, namely $f_xf_yg_z$, $f_xg_yf_z$ and $g_xf_yf_z$, each of which transforms by an elementary transformation (respectively, $x\to x-z, y \to y-z$ and $x\to x-z$) to a case with 3 axial edges.
For $w$ of type $fgg$ there are 3 cases, namely
$f_xg_yg_z$, $g_xf_yg_z$ and $g_xg_yf_z$. The first and second of these are not new, since the transformations $y\to y-z$ and $x\to x-z$, respectively, lead to an equivalence with $\M_{\rm pcu}^{ggd}$, while the third case is the model net $\M_{aad}^{ggf}$.

Finally, for $w$ of type $ggg$ we have the model net $\M_{aad}^{ggg}$.
\medskip

{\bf The case $m=7$.} 
There are 4 cases of standardised edge word of the form $aaawd$ with $w$ of type $fff, ffg, fgg$ or $ggg$. The model net for 
$a_xa_ya_zf_xf_yf_zd$ is obtained from the  model net for $a_xa_ya_zf_xf_yg_zd$ by the transformation $y\to y-z$ followed by a rotation. Thus there is a maximum of 3 equivalence classes with representative model nets $\M_{\rm pcu}^{fffd}, \M_{\rm pcu}^{ggfd}, \M_{\rm pcu}^{gggd}$. Since these are distinguished by their edge penetration type the proof is complete.
\end{proof}

%{\bf Maximum symmetry embeddings of $n$-grids.} 
%\medskip

%To determine maximal symmetry of $n$-grids, we used the following geometrical argument. Maximal symmetry of an $n$-grid cannot be higher than that of a corresponding ``coordinate-distinct" periodic point set $\{(x_i, y_i, z_i)\}, i = 1, 2, \dots , n\}$. To this end, we found maximal symmetry of point sets using PLATON program (Spek, \cite{spe}). Recall that the symmetry of any pattern maps vertices to vertices and edges to edges. To prove whether the symmetry found by PLATON is compatible also with edges, a simple check of vertex stabilizers was necessary. Additionally, using Systre we computed automorphism groups of the respective Hopf ring nets which set an upper bound for maximal symmetry \cite{bab}. In all the cases it turns out that maximal symmetry of $n$-grids coincides with the automorphism groups of the respective Hopf ring nets.

% \section{Appendix D}

\end{document}